\setlist[enumerate,1]{label=(\arabic*), ref=(\arabic*), itemsep=0em}
\setlist[itemize,1]{topsep=0.5em}
\numberwithin{equation}{section}
\newtheorem{theorem}{Theorem}[section]
\newtheorem{corollary}[theorem]{Corollary}
\newtheorem{lemma}[theorem]{Lemma}
\newtheorem{proposition}[theorem]{Proposition}
\theoremstyle{definition}
\newtheorem{definition}[theorem]{Definition}
\newtheorem{remark}[theorem]{Remark}
\newtheorem{example}[theorem]{Example}
\newtheorem{problem}[theorem]{Problem}
\DeclareMathOperator{\im}{im}
\DeclareMathOperator{\id}{id}
\DeclareMathOperator{\Spec}{Spec}
\DeclareMathOperator{\Proj}{Proj}
\DeclareMathOperator{\Hom}{Hom}
\DeclareMathOperator{\Ext}{Ext}
\newcommand{\tensor}{\otimes}
\newcommand{\onto}{\twoheadrightarrow}
\newcommand{\into}{\hookrightarrow}
\newcommand{\hook}{\,\lrcorner\,}
\newcommand{\kk}{\Bbbk}%
\newcommand{\kadp}{\kappa_{\mathrm{dp}}}%
\newcommand{\kappadp}{\kappa_{\mathrm{dp}}}%
\newcommand{\spann}[1]{\left\langle #1 \right\rangle}
\newcommand{\sat}{\operatorname{sat}}%
\newcommand{\redd}{\operatorname{red}}%
\DeclareMathOperator{\Hilb}{Hilb}%
\DeclareMathOperator{\Sat}{Sat}%
\DeclareMathOperator{\Slip}{Slip}
\DeclareMathOperator{\pr}{pr}%
\DeclareMathOperator{\Gr}{Gr}%
\newcommand{\OO}{\mathcal{O}}%
\newcommand{\sm}{\operatorname{sm}}%
\DeclareMathOperator{\brr}{br}
\DeclareMathOperator{\srr}{sr}
\DeclareMathOperator{\rr}{r}
\DeclareMathOperator{\crr}{cr}
\newcommand{\genHF}[2]{H_{#1,\mathbb{P}^{#2}}}
\newcommand{\genMHS}[2]{\Hilb_{#1, \mathbb{P}^{#2}}}
\newcommand{\genSat}[2]{\Satbar_{#1,\mathbb{P}^{#2}}}
\newsavebox{\pullback}
\sbox\pullback{%
\begin{tikzpicture}%
\draw (0,0) -- (1ex,0ex);%
\draw (1ex,0ex) -- (1ex,1ex);%
\end{tikzpicture}}
\DeclareMathOperator{\TgFlag}{Tg_{\,\mathrm{flag}}}
\DeclareMathOperator{\ObFlag}{Ob_{\,\mathrm{flag}}}
\DeclareMathOperator{\ObFib}{Ob_{\,\mathrm{fiber}}}
\DeclareMathOperator{\ObSymbol}{Ob}
\DeclareMathOperator{\GL}{GL}
\newcommand{\mm}{\mathfrak{m}}%
\newcommand{\pp}{\mathfrak{p}}%
\renewcommand{\aa}{\mathbf{a}}%
\newcommand{\bb}{\mathbf{b}}%
\newcommand{\dd}{\mathbf{d}}%
\newcommand{\ee}{\mathbf{e}}
\newcommand{\vv}{\mathbf{v}}
\newcommand{\midx}{\mathbf{m}}%
\newcommand{\BBname}{Bia{\l{}}ynicki-Birula}
\newcommand{\zerodeg}{\mathbf{0}}
\newcommand{\Irr}{\operatorname{Irr}}%
\renewcommand{\AA}{\mathcal{A}}%
\DeclareMathOperator{\Pic}{Pic}
\DeclareMathOperator{\Nef}{Nef}
\DeclareMathOperator{\Eff}{Eff}
\DeclareMathOperator{\Ann}{Ann}
\begin{document}

\title{Limits of saturated ideals}
\author{Joachim Jelisiejew, Tomasz Ma{\'n}dziuk}
\thanks{JJ is supported by National Science Centre grant 2020/39/D/ST1/00132.
TM was supported by National Science Centre grant 2019/33/N/ST1/00858.
This work is partially supported by the
Thematic Research Programme \emph{Tensors: geometry, complexity and quantum
entanglement}, University of Warsaw, Excellence Initiative – Research
University and the Simons Foundation Award No.~663281 granted to the Institute
of Mathematics of the Polish Academy of Sciences for the years 2021-2023.}
\begin{abstract}
    We investigate the question of when a given homogeneous ideal is a limit
    of saturated ones. We provide cohomological necessary criteria for this to
    hold and apply them to a range of examples. In small cases we characterise the
    limits. We also supply a number of auxiliary results on the classical and
    multigraded Hilbert schemes, for example we prove a very general result on
    openness of the saturated locus and give scheme structure to ``constant
    Hilbert function'' loci. Our results have applications to theory of
    tensors.
\end{abstract}
\date{\today}
\maketitle

\section{Introduction}

    It is famously untrue that a deformation of a zero-dimensional projective
    scheme $\Gamma \subseteq \mathbb{P}^{n}$ induces a deformation of its
    homogeneous coordinate ring $S/I(\Gamma)$, where
    $I(\Gamma)\subseteq S =\kk[\alpha_0, \ldots,
    \alpha_n]$ is the saturated ideal. The standard example is given by the
    varying three points
    \[
        \Gamma_t = \{[1:0:0], [1:t:1], [0:0:1]\}\subseteq \mathbb{P}^2,
    \]
    which form a flat family over the affine line with parameter $t$. The ideal
    $I(\Gamma_t|_{t=\lambda})$
    contains a linear form exactly when $\lambda = 0$, so
    if the family $S[t]/I(\Gamma_t)$ existed, it had to be \emph{lower} semi-continuous
    in degree one; an absurd. What happens instead is that as $\Gamma_t$
    converges towards $\Gamma_0$, the family of
    ideals $I(\Gamma_t|_{t=\lambda})$ converges degreewise to a \emph{nonsaturated} ideal
    \[
        I(\Gamma_0)' = \left(\alpha_0\alpha_2(\alpha_0 -
        \alpha_2)\right) + \alpha_1\cdot\left(\alpha_0, \alpha_1, \alpha_2
        \right).
    \]
    For a homogeneous ideal $I$ we say that it is \emph{a limit of saturated
    ideals} or \emph{saturable} if there is a family of saturated ideals
    that converges to $I$. Hence $I(\Gamma_0)'$ above is saturable, yet not
    saturated. In this article we tackle the following.
    \begin{problem}\label{ref:mainProblem}
        Given a homogeneous ideal, decide whether it is saturable.
    \end{problem}
    There are numerical obstructions to being saturable: an ideal has to have
    an admissible Hilbert function.
    In the case of a standard graded polynomial ring such functions are
    classified by Macaulay~\cite[\S4.2]{BrunsHerzog}.
    However, once the Hilbert
    function is admissible, there are no general
    criteria for verifying whether an ideal is saturable. We give such
    criteria below, using deformation theory.

    \subsection{Results}
    Our results are of three kinds. First, we prove
    several results for classical and multigraded Hilbert schemes: see
    Theorem~\ref{ref:intro:saturated},
    Theorem~\ref{ref:opennessOfSaturation:thm} and
    Theorem~\ref{ref:obstructionAtFlag:thm}. Second, we employ them and our
    idea of ``stickyness'' to give
    explicit necessary criteria for being saturable. These
    criteria are checkable by computer on specific examples, so
    they are useful also for researchers outside algebraic geometry.
    Third, as an illustration, we give examples and applications to specific
    low degrees and to wild polynomials (see~\S\ref{sec:wild}).

    Below, we consider (multi)homogeneous ideals but we do not impose any further
    restrictions, for example, we do \emph{not} restrict to the zero-dimensional
    case. Actually, we take care to prove results under minimal assumptions:
    we allow any multigrading, any dimension,
    any base ring, see~\S\ref{ssec:setup} for details.


        Before we present our results, we informally present the observation which
        underlies our criteria. Let $S$ be a Cox ring of a smooth projective toric variety. In
        algebraic terms, this implies that $S$ is a polynomial ring graded by a
        finitely generated abelian
        group $\AA$ and admitting an irrelevant ideal $\Irr$ with respect to
        which we consider the saturation (see
        Example~\ref{ex:productOfProjectives} for one concrete instance of
        this setup). We say that
        an ideal $J$ with $I \subsetneq J\subseteq I^{\sat}$ \emph{sticks with}
        $I$ (or is a \emph{sticky ideal} for $I$) if every deformation of $I$ induces a deformation of
        $J\supseteq I$.  If $I$ admits a sticky ideal $J$, then it is not a limit of saturable
        ideals. Indeed, suppose $I_t\to I$ with $I_t|_{t=\lambda}$ saturated
        for $\lambda\neq 0$.
        Then we have a corresponding family $J_t\supsetneq I_t$. Due to
        equal Hilbert polynomials, the ideals $J_t|_{t=\lambda}$ and
        $I_t|_{t=\lambda}$ agree in ``positive enough''
        degrees, so $(J_t|_{t=\lambda})^{\sat} = (I_{t}|_{t=\lambda})^{\sat} = I_t|_{t=\lambda}$, in particular
        $J_t|_{t=\lambda}\subseteq I_t|_{t=\lambda}$, which is a contradiction.

        In this paper, we obtain a toolbox of cohomological criteria for proving that a given ideal
        $J$ sticks with $I$. The \emph{fiber obstruction group} for the pair
        $(I, J)$ is
        \begin{equation}\label{eq:introOb}
            \ObFib(I,J) := \Ext^1_S\left( \frac{J}{I},\ \frac{S}{J}
            \right)_{\zerodeg},
        \end{equation}
        where $\zerodeg\in \AA$ is the zero element.
        The most succinct version of our criterion is as follows.
        \begin{theorem}[Theorem~\ref{ref:deformsWhenObFibVanishes:thm}]\label{ref:introObVanishes:thm}
            If $\ObFib(I, J) = 0$, then $J$ sticks with $I$, hence $I$ is not
            saturable.
        \end{theorem}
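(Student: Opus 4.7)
The plan is to reduce the theorem to a standard deformation-theoretic lifting problem and then kill the obstruction using the hypothesis $\ObFib(I,J)=0$. The implication ``sticks $\Rightarrow$ not saturable'' is exactly the argument recalled in the paragraph preceding the theorem, so we focus entirely on the first conclusion: every deformation of $I$ must lift to a deformation of the pair $I\subseteq J$.

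By the usual formal criterion, it suffices to treat infinitesimal lifts. We fix a small extension $0\to\kk\to A'\to A\to 0$ of Artin local $\kk$-algebras, a flat multigraded family $I_{A'}\subseteq S_{A'}$ reducing to $I$, and a flat lift $J_A\supseteq I_A := I_{A'}\otimes_{A'} A$ of $J$; the task is to produce a flat $J_{A'}\supseteq I_{A'}$ restricting to $J_A$. Global deformations are then recovered from these compatible systems by Artin approximation, using that the multigraded Hilbert scheme is of finite type.

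The existence of this infinitesimal lift is governed by the standard Quot-type obstruction for deforming the sub-quotient $J_A/I_A\subseteq S_A/I_A$, which lives in $\Ext^1_{S/I}(J/I,\,S/J)_{\zerodeg}$. To compare this with the group $\ObFib(I,J)$ of the statement, we plan to invoke the change-of-rings spectral sequence
\[
    E_2^{p,q} = \Ext^p_{S/I}\!\left(J/I,\,\Ext^q_S(S/I,\,S/J)\right) \;\Longrightarrow\; \Ext^{p+q}_S(J/I,\,S/J),
\]
whose five-term exact sequence produces a graded injection $\Ext^1_{S/I}(J/I,S/J)\hookrightarrow \Ext^1_S(J/I,S/J)$. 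Restricting to degree $\zerodeg$, the obstruction injects into $\ObFib(I,J)=0$ and therefore vanishes, so the desired lift $J_{A'}$ exists.

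The main technical obstacle we anticipate is the first step: making the flag multigraded Hilbert functor and its obstruction theory precise, and verifying that the obstruction really is the class in $\Ext^1_{S/I}(J/I, S/J)_{\zerodeg}$ written above with the appropriate grading conventions. Once this is set up, the change-of-rings comparison and the passage from formal to algebraic families are routine, so in a formal write-up most of the work will lie in the deformation-theoretic bookkeeping rather than in any actual Ext computation.
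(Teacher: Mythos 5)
Your overall strategy --- reduce ``sticks'' to an infinitesimal lifting problem for $\pr_I\colon\Hilb_{I\subseteq J}\to\Hilb_I$ and then show the relevant obstruction lands in a group that $\ObFib(I,J)$ controls --- is the right idea and is essentially what the paper does. But the route you take through the obstruction is genuinely different from the paper's, and the spot you flag as ``routine deformation-theoretic bookkeeping'' is in fact the entire content of the paper's Appendix (Theorem~\ref{ref:obstructionAtFlag:thm}), and it is not quite as you describe.

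The paper does not directly argue that the \emph{relative} obstruction for $\pr_K$ lies in (or injects into) $\ObFib$. Instead it sets up an \emph{absolute} obstruction theory $\ObFlag$ for the flag Hilbert scheme at $[K\subseteq J]$, compatible with the projections to $\Hilb_K$ and $\Hilb_J$, and then invokes the Main Theorem of obstruction calculus: $\pr_K$ is smooth at $[K\subseteq J]$ because $\ObFib=0$ forces (via the main diagram) the tangent map $\TgFlag\to\Hom_S(K,S/K)_\zerodeg$ to be surjective and the obstruction map $\ObFlag\to\Ext^1_S(K,S/K)_\zerodeg$ to be injective. Crucially, Theorem~\ref{ref:obstructionAtFlag:thm} only gives this obstruction theory under the hypothesis that the map $\psi_\zerodeg\colon\Hom_S(K,S/K)_\zerodeg\oplus\Hom_S(J,S/J)_\zerodeg\to\Hom_S(K,S/J)_\zerodeg$ is surjective, which is precisely what $\ObFib=0$ buys you via the long exact sequence. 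Your sketch never addresses why any obstruction theory exists for the flag functor, and never uses this surjectivity; in the paper's proof of Theorem~\ref{ref:obstructionAtFlag:thm} that surjectivity is exactly what lets one correct an arbitrary lift in $\Hilb_K\times\Hilb_J$ to one that lands in $\Hilb_{K\subseteq J}$.

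Two smaller comments on your identification of the obstruction group. First, the natural home for the extension class you would write down is $\Ext^1_S(J/I,S/J)_\zerodeg=\ObFib(I,J)$ itself, not $\Ext^1_{S/I}(J/I,S/J)_\zerodeg$: the ambient ring $S/I$ is precisely the thing that deforms, so the Ext group should be taken over the fixed ring $S$, and then the change-of-rings step becomes superfluous. (Your spectral-sequence injection is correct --- since $\Hom_S(S/I,S/J)=S/J$, the edge map $\Ext^1_{S/I}(J/I,S/J)\hookrightarrow\Ext^1_S(J/I,S/J)$ exists --- so even the imprecise identification would still yield the conclusion, but it is a detour.) Second, even granting the right group, ``the obstruction lives in this $\Ext^1$ and it vanishes'' is not automatic from standard Quot obstruction theory, because standard Quot deforms submodules of a \emph{fixed} module, while here $S/I$ is deforming along with $J/I$; establishing that vanishing of $\ObFib$ really does produce the lift $J_{A'}$ is the nontrivial core that the paper delegates to Theorem~\ref{ref:obstructionAtFlag:thm} and its proof. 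So your plan is on the right track and leads to the same theorem, but the ``main technical obstacle'' you name is load-bearing rather than routine, and it hides the surjectivity condition on $\psi_\zerodeg$ that makes the argument go.
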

        This is effective already for the ideal $J :=
        I^{\sat}$, see~\S\ref{ssec:four}.
        A slightly changed version is the following, which employs the
        multigraded Hilbert scheme, see~\S\ref{ssec:setup}.
        \begin{theorem}[Theorem~\ref{ref:deformWhenTangentSurjective:thm}]\label{ref:introSmoothAndTangentSpace:thm}
            If the natural map $\Hom_S(J, S/J)_{\zerodeg}\to\Hom_S(I,
            S/J)_{\zerodeg}$ is onto
            and $[J]\in \Hilb^{H_{S/J}}$ is smooth, then $J$ sticks with $I$, hence $I$ is not
            saturable.
        \end{theorem}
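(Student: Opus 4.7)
The plan is to verify the infinitesimal version of ``$J$ sticks with $I$'': given a small extension $A' \twoheadrightarrow A$ of Artin local $\kk$-algebras with square-zero kernel $K$, a flat deformation $I_A$ of $I$ over $A$ equipped with a compatible deformation $J_A \supseteq I_A$ of $J$, and a flat lift $I_{A'}$ of $I_A$ to $A'$, I produce a flat lift $J_{A'} \supseteq I_{A'}$ of $J_A$ to $A'$. By algebraization and induction on the length of $A$ this suffices, since any deformation of $I$ is then upgraded stepwise to a deformation of the pair $(I, J)$.

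Smoothness of $[J] \in \Hilb^{H_{S/J}}$ provides \emph{some} flat lift $J_{A'}^{(0)}$ of $J_A$ to $A'$; moreover, the set of all such lifts is a torsor under $\Hom_S(J, S/J)_{\zerodeg} \tensor_\kk K$. In general $J_{A'}^{(0)}$ need not contain $I_{A'}$. I measure the defect as follows: for $f \in I$, choose lifts $\tilde f \in I_{A'}$ and $\tilde g \in J_{A'}^{(0)}$ sharing the same reduction in $J_A$, so that $\tilde f - \tilde g \in S \tensor K$. The assignment $\varphi(f) := \overline{\tilde f - \tilde g} \in (S/J) \tensor K$ is easily checked to be independent of the choices modulo $J \tensor K$, $S$-linear, of degree zero, and to vanish precisely when $I_{A'}\subseteq J_{A'}^{(0)}$. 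This defines a class $\varphi \in \Hom_S(I, S/J)_{\zerodeg} \tensor K$. By the surjectivity hypothesis, $\varphi$ lifts to some $\psi \in \Hom_S(J, S/J)_{\zerodeg} \tensor K$. Shifting $J_{A'}^{(0)}$ by $\psi$ via the torsor action yields a new flat lift $J_{A'}$, and a direct computation with the definition of the torsor action shows that the new defect equals $\varphi - \psi|_I = 0$. Hence $I_{A'} \subseteq J_{A'}$, completing the inductive step.

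The main technical obstacle is the construction and verification of $\varphi$: checking well-definedness (independence of the choice of lifts $\tilde f, \tilde g$), $S$-linearity and the degree-zero condition, and — most importantly — the naturality statement that replacing $J_{A'}^{(0)}$ by the $\psi$-shifted lift changes the defect by precisely $-\psi|_I$. Once this bookkeeping is in place, the hypotheses deliver the conclusion mechanically: smoothness of $[J]$ supplies both the initial lift and the torsor structure on the set of lifts, while the surjectivity of $\Hom_S(J, S/J)_{\zerodeg} \to \Hom_S(I, S/J)_{\zerodeg}$ is exactly what is needed to kill the obstruction class $\varphi$. The implication ``$J$ sticks with $I$'' $\Rightarrow$ ``$I$ is not saturable'' has already been argued in the introduction.
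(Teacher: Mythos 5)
Your proof is correct and reaches the same conclusion, but by a genuinely more direct route than the paper. The paper's proof is a two-step affair: it first invokes Theorem~\ref{ref:obstructionAtFlag:thm} (established by a technical bookkeeping argument in the appendix) to endow $\Hilb_{K\subseteq J}$ with an obstruction theory valued in $\ObFlag$, and then applies the ``Main Theorem of obstruction calculus'' to the map $\pr_K$ after shrinking the obstruction space to $O := \ker(\ObFlag\to \Ext^1_S(J, S/J)_{\zerodeg})$, which is possible because $[J]$ is unobstructed. You instead carry out the relevant piece of the obstruction calculus by hand: you exhibit the defect class $\varphi \in \Hom_S(I, S/J)_{\zerodeg}\tensor K$ measuring the failure of $I_{A'}\subseteq J_{A'}^{(0)}$ and kill it by the torsor action of $\Hom_S(J, S/J)_{\zerodeg}\tensor K$ on flat lifts of $J_A$, using the surjectivity hypothesis. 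This is essentially a streamlined specialization of the appendix argument behind Theorem~\ref{ref:obstructionAtFlag:thm}: because your hypothesis is slightly stronger than the $\psi_{\zerodeg}$-surjectivity used there (you ask that the map already be onto from the $\Hom_S(J,S/J)$ summand alone), you only need to adjust $J_{A'}^{(0)}$ and never have to shift $I_{A'}$. The trade-off is that your approach is self-contained and avoids the appendix and the formal machinery of $\ObFlag$ entirely, at the cost of re-doing the verification of well-definedness and $S$-linearity of the defect class inline, whereas the paper's version reads as two lines once the appendix is in place. One small imprecision you should tidy up: ``by algebraization'' is not quite the right phrase for the passage from formal smoothness over Artin local rings to smoothness of the locally finitely presented morphism $\pr_I$ at $[I\subseteq J]$; what you want is the infinitesimal lifting criterion for smoothness, after which openness of the image gives the lifting of actual families near $[I]$ exactly as in the paper's proof of Theorem~\ref{ref:deformsWhenObFibVanishes:thm}.
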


        We say that $I$ is \emph{entirely nonsaturable} if it is nonsaturated and
        there is an open neighbourhood of $[I]\in \Hilb^{H_{S/I}}$ such that any ideal
        $I'$ from this neighbourhood has $S/(I')^{\sat}$ with the same Hilbert
        function as $S/I^{\sat}$, so ``every near enough ideal is as nonsaturated
        as $I$''. If the conditions of
        Theorem~\ref{ref:introObVanishes:thm} or
        Theorem~\ref{ref:introSmoothAndTangentSpace:thm} are satisfied for $J :=
        I^{\sat}$, then $I$ is entirely nonsaturable (see
        Theorems~\ref{ref:saturationDeformsWhenObFibVanishes:thm}-\ref{ref:saturationDeformWhenTangentSurjective:thm}).

        The necessary conditions for both theorems above are easily computable
        with computer systems such as \emph{Macaulay2}~\cite{M2}. However,
        the presentation of fiber obstruction~\eqref{eq:introOb} is opaque.
        There is a more transparent version when $\AA = \mathbb{N}$ and
        $J = I^{\sat}$ defines a one-dimensional subscheme.
        Namely, the vector space $\ObFib$ is dual to $(I^{\sat}/I\tensor_{S}
        \omega_{S/I^{\sat}})_0$, where $\omega_{S/I^{\sat}}$ is the canonical
        module, see Proposition~\ref{ref:CIAndcanonicalModule:prop}.
        This reinterpretation is especially useful when we assume
        additionally that $S/I^{\sat}$ is Gorenstein (which holds for example if
        $I^{\sat}$ is a complete intersection). In this case, we have
        \begin{equation}\label{eq:introObEasy}
            \ObFib(I, I^{\sat})  \simeq
            \left(\frac{I^{\sat}}{I+(I^{\sat})^2}\right)_{a}^{\vee}
        \end{equation}
        where $a$ is the largest degree for which
        $H_{S/I^{\sat}}(a)
        <H_{S/I^{\sat}}(a+1)$ see~Corollary~\ref{ref:fiberObstructionForGorenstein:cor}. The obstruction group can be more
        generally computed for non-Gorenstein $J=I^{\sat}$ if additional
        conditions on the Hilbert functions are satisfied, see
        Example~\ref{ex:lineAndPoints}.

        Above we discussed \emph{necessary} conditions for being saturable. It
        is natural to ask what happens if those are not satisfied.
        In this case, interestingly, we do get information about possible
        families showing saturability of $I$. For example, in the context
        of Theorem~\ref{ref:introSmoothAndTangentSpace:thm} for
            $J=I^{\sat}$, if $\Hom_S(J,
            S/J)_{\zerodeg}\to \Hom_S(I, S/J)_{\zerodeg}$ is not surjective and yet $\Hom_S(I,
            S/I)_{\zerodeg}\to \Hom_S(I, S/J)_{\zerodeg}$ is surjective, then we obtain a tangent
        vector (see~\S\ref{ssec:maindiagram}) at $[I]$, which
        does not come from a tangent vector at $[I\subseteq I^{\sat}]$. If we are able to
        integrate along it (for example, if $[I]$ is smooth), then we obtain a
        one-dimensional family whose
        general member $I'$ has $H_{S/(I')^{\sat}} > H_{S/I^{\sat}}$, where
        the inequality means argument-wise inequality with at least one of
        them strict. Repeating this process we arrive at the case where
        $H_{S/(I')^{\sat}} = H_{S/I} = H_{S/I'}$, hence $I'$ is saturated.
        This procedure seems effective on examples even when $[I]$ is not
        smooth.

        \subsection{Examples} To illustrate the effectiveness of our ideas, we
        decide the saturability for all ideals with Hilbert function $(1,d,d,d,
        \ldots )$ for $d\leq 5$; in this case $\AA = \mathbb{N}$, see~\S\ref{sec:examples}. For $d = 4$ and
        all but one cases for $d = 5$ a nonsaturated ideal $I$ is saturable  if and only
        if the obstruction space~\eqref{eq:introObEasy} is nonzero.
        The unique case for $d = 5$ is when $H_{S/I^{\sat}} = (1,2,3,4,5,5,
        \ldots )$. In this case the nonvanishing of the obstruction boils down
        to $(I^{\sat})^2_3\subseteq I$. It is still
        necessary, yet not sufficient: there is a second necessary condition
        of the form $(I^{\sat}\cdot J)_2\subseteq I$ for a certain
        ideal $I\subseteq J \subseteq I^{\sat}$ determined by $I$.
        The same argument would apply to other Hilbert functions
        eventually equal to $d\leq
        5$; we have restricted ourselves to $(1,d,d, \ldots )$ in order to
        better exhibit the main ideas.

    \subsection{Application to the classical Hilbert scheme} While our focus is on nonsaturated ideals,
    we obtain two results about saturated ideals. They are of
    independent interest.

    The first results concerns the openness of saturation.
    \begin{theorem}[openness of saturation,
        Theorem~\ref{ref:opennessOfSaturation:thm}]\label{ref:openness:introthm}
        For any $\AA$-grading on $S$, any Hilbert function
        $H\colon \AA\to \mathbb{N}$, and any fixed homogeneous ideal $\Irr$, the subset of $[I]\in \Hilb^H$
        which are saturated (with respect to $\Irr$) is open.
    \end{theorem}
    The above seems not present in the literature even in the case of standard
    $\mathbb{N}$-grading on the polynomial ring $S$ and $\Irr = S_{\geq0}$. It is important in itself, but also for applications to tensors, where it
    allows to define the Slip component(s), see
    Definition~\ref{ref:Slip:def}.

    Our second result gives a functorial description of the set of all
    zero-dimensional ideals with given Hilbert function.
    \begin{theorem}[Theorem~\ref{ref:partialToGrothendieck:prop}]\label{ref:intro:saturated}
        Let $S$ be the Cox ring of a smooth projective toric variety $X$ and $\AA \simeq
        \Pic(X)$ be the
        induced grading group.
        For any function $H\colon \AA\to \mathbb{N}$ with multigraded
        Hilbert polynomial equal to $d\in \mathbb{N}$,
        the locus of $[\Gamma]\in \Hilb^d (X)$ with
        $H_{S/I(\Gamma)} = H$ is locally closed in $\Hilb^d
        (X)$ and admits
        a scheme structure with which it is isomorphic to $\Hilb^{H,\sat}$, the saturated locus
        of $\Hilb^{H}$.
    \end{theorem}
    Theorem~\ref{ref:intro:saturated} allows in particular for a effective
    calculation of the tangent space to the locus.

    In the case of projective space, we can say more about the smoothness.
    \begin{theorem}[Theorems~\ref{ref:smoothPoints:thm}-\ref{ref:usualClassesSmooth:cor}]\label{ref:intro:saturatedSmooth}
        In the setting of Theorem~\ref{ref:intro:saturated}, suppose
        additionally that $X = \mathbb{P}^{n-1}$. Then, a point $[\Gamma]$ in
        the saturated locus is smooth whenever the Artin reduction of $S/I(\Gamma)$ is
        unobstructed. In particular, $[\Gamma]$ is smooth when $n \leq 3$ or
        when $n \leq 4$ and $S/I(\Gamma)$ Gorenstein.
    \end{theorem}
    In the case $n = 3$ and for a field of characteristic zero Gotzmann proved
    that the above locus is smooth~\cite{gotzmann_fixed_Hilbert_function}.
    Kleppe proved smoothness in the Gorenstein $n=4$
    case~\cite{kleppe__smoothness}. The
    interpretation as a functor seems new even in these special cases.

    \subsection{Application to nonexistence of wild polynomials}

        We apply the above results also to the theory of border rank in the
        case of $\mathbb{P}^2$.
        It is an open question whether wild
        forms (see~\S\ref{sec:wild}) in three variables exist. By~\cite[Proposition~3.4]{Mandziuk} degree $d$
        forms of border rank at most $d+2$ are not wild. Here we improve the
        bound by one.
        \begin{proposition}[Proposition~\ref{prop:no_wild_polynomials}]
            Let $F$ be a degree $d$ form in three variables. If the border
            rank of $F$ is at most $d+3$, then the smoothable rank of $F$ is
            equal to its border rank, so $F$ is not wild.
        \end{proposition}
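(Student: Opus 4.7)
My plan is to combine weak border apolarity \cite[Theorem~1.2]{Buczyska_Buczynski__border} with the cohomological machinery of this paper to exhibit a length-$R$ apolar scheme for $F$ whenever $R := \brr(F) \leq d+3$. Weak border apolarity supplies a homogeneous ideal $I \subseteq S = \kk[\alpha_0,\alpha_1,\alpha_2]$ with Hilbert function $H_R$ (the Hilbert function of a general $R$-tuple in $\mathbb{P}^2$) such that $F \in (I_d)^{\perp}$ and $I$ is saturable. Since the Hilbert scheme $\Hilb^R(\mathbb{P}^2)$ is irreducible, every length-$R$ subscheme of $\mathbb{P}^2$ is smoothable, so the desired conclusion $\srr(F) = \brr(F)$ amounts to producing any length-$R$ subscheme of $\mathbb{P}^2$ apolar to $F$.

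The natural candidate is $\Gamma := V(I^{\sat})$, of length $R$. For $d \geq 2$ and $R \leq d+3$ one has $\binom{d+2}{2} \geq R$, so $H_R(d) = R$, and the equality $I_d = I^{\sat}_d = I(\Gamma)_d$ is equivalent to $H_{S/I^{\sat}}(d) = R$. In that good case $F \in I(\Gamma)_d^{\perp}$ and we are done. A short Hilbert function analysis shows that, for a length-$R$ subscheme of $\mathbb{P}^2$ with $R \leq d+3$, the Hilbert function $H_{S/I^{\sat}}$ fails to stabilize at $R$ by degree $d$ only when $\Gamma$ lies on a line, producing the profile $(1, 2, \ldots, R-1, R, R, \ldots)$. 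For degrees $d \leq 1$ the statement is trivial, and for the remaining cases not already covered by \cite[Proposition~3.4]{Mandziuk} the only new Hilbert function to handle is the collinear one.

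The main obstacle is therefore the collinear case: here $I^{\sat}$ contains a linear form $\ell$ cutting out the line, $I_d \subsetneq I^{\sat}_d = \ell \cdot S_{d-1}$, and $\Gamma = V(I^{\sat})$ need not itself be apolar to $F$. My plan is to exploit saturability of $I$ through Theorem~\ref{ref:introObVanishes:thm} in the contrapositive: for every intermediate $I \subsetneq J \subseteq I^{\sat}$, the obstruction $\ObFib(I, J)$ must be nonzero. Combined with the explicit Gorenstein description of Corollary~\ref{ref:fiberObstructionForGorenstein:cor} applied to the complete intersection $I^{\sat} = (\ell, g)$, this should furnish a tangent direction at $[I] \in \Hilb^{H_R}$ that preserves the apolarity condition $F \in (-)_d^{\perp}$ while strictly increasing $H_{S/(-)^{\sat}}(d)$. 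Iterating this one-step modification reduces us to the non-collinear case handled above and produces the required length-$R$ apolar scheme. The numerical bound $R \leq d+3$ enters precisely to guarantee that the preservation of the apolarity condition along the modification is automatic, improving by one unit on Ma{\'n}dziuk's $R \leq d+2$ bound \cite[Proposition~3.4]{Mandziuk}; verifying this preservation is the hard step.
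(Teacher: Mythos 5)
Your outline correctly identifies the starting point (weak border apolarity gives a saturable $I \subseteq \Ann(F)$ with Hilbert function $H_R$, and the goal is to exhibit a length-$R$ apolar scheme), and you correctly observe that the obstruction $\ObFib(I,J)$ must be nonzero for all intermediate $J$ by the contrapositive of Theorem~\ref{ref:introObVanishes:thm}. However, there are two concrete problems.

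First, the Hilbert function analysis is wrong. For $R = d+3$ there are \emph{two} Hilbert functions of $S/I^{\sat}$ for which $H_{S/I^{\sat}}(d) < R$: the collinear profile $\genHF{d+3}{1} = (1,2,3,\ldots)$ you identified, and also $f_{d+3} = (1,3,4,5,\ldots,d+2,d+3,d+3,\ldots)$, for which $f_{d+3}(d) = d+2 < d+3$. The latter arises from a degree-$(d+2)$ subscheme of a line together with one extra point, and is genuinely new at $R = d+3$ (for $R = d+2$ the analogous function already stabilizes in degree $d$, which is exactly why Ma{\'n}dziuk's bound stops there). Your proposal handles only the collinear case, and moreover the complete-intersection/Gorenstein reduction via Corollary~\ref{ref:fiberObstructionForGorenstein:cor} you lean on does not apply verbatim to the $f_{d+3}$ case; the paper needs the more general Example~\ref{ex:lineAndPoints} there.

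Second, the ``tangent direction preserving apolarity and strictly increasing $H_{S/(-)^{\sat}}(d)$'' step is the crux and is asserted rather than argued. Non-vanishing of $\ObFib(I,I^{\sat})$ is an obstruction-theoretic statement; it does not by itself produce a first-order deformation of $I$ that stays inside the linear condition $I_d \subseteq \Ann(F)_d$ while changing the saturation's Hilbert function, and you give no mechanism for integrating such a tangent vector or for termination of the iteration. The paper's route is genuinely different and avoids this difficulty: it converts $\ObFib \neq 0$ into explicit structural constraints (certain monomial multiples such as $\alpha_0^2\alpha_1,\ \alpha_0^2\alpha_2$, etc., lie in $\Ann(F)$, via Lemmas~\ref{lem:apolar_contained_in_line_has_special_form}--\ref{lem:generators_in_degree_3}) and then, using Lemma~\ref{lem:middle_generator} to supply a middle-degree element of $\Ann(F)$ not divisible by the linear form, writes down a concrete saturated ideal of degree at most $d+3$ inside $\Ann(F)$ (often by passing to an initial ideal), which directly contradicts $\crr(F) > d+3$. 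To salvage your approach you would need to cover the $f_{d+3}$ case and replace the tangent-vector heuristic with an actual argument; as it stands the proposal does not constitute a proof.
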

        There are lower bounds for the smoothable rank such
        as~\cite{ranestad_schreyer_on_the_rank_of_a_symmetric_form}.
        Coupled with the above they yield constraints on the possible forms of
        low border rank. To give a concrete example, a septic with Hilbert
        function $(1,3,6,9,9,6,3,1)$ and border rank $9$ has, by the above,
        smoothable rank $9$ and so,
        by~\cite[Proposition~1]{ranestad_schreyer_on_the_rank_of_a_symmetric_form},
        its apolar ideal cannot be generated in degrees at
        most $4$. This also follows from border apolarity \cite[Theorem~3.15]{Buczyska_Buczynski__border}.

    \subsection{Motivation from theory of VSP} Investigating Problem~\ref{ref:mainProblem} is
    very natural from the commutative algebra perspective.
    But, for us, the main motivation comes from the theory of tensors.
    A central problem there is to determine the border rank of a tensor (see
    Section~\ref{sec:wild} for a brief introduction). This problem is solved
    in very very few cases and is open even for tensors of small
    size~\cite[\S16]{burgisser_clausen_shokrollahi},
    \cite{Conner_Huang_Landsberg}.

    A recent successful technique is \emph{border
    apolarity}~\cite{Buczyska_Buczynski__border} (see also
    \cite{ranestad_schreyer_VSP, Ranestad_Iliev__VSPcubic,
        Ranestad_Schreyer__polar_simplices,
    Ranestad_Voisin__VSP, Jelisiejew_Ranestad_Schreyer}). To a tensor $T$ and
    integer $r$ it assigns a projective scheme, the \emph{(border) variety of sums of
    powers} whose points are certain multigraded
    ideals $I$ contained in $\Ann(T)$. The tensor $T$ has border rank at most
    $r$ if and only if the scheme contains a saturable ideal with multigraded Hilbert polynomial
    $r$ which is a limit of reduced ones. Frequently, even
    if such an ideal exists, the scheme contains no saturated ideals at all.
    Producing ideals in the scheme is in practice easy, but deciding whether
    they are saturable is hard.

    \subsection{Previous work}

        To our best knowledge, there are almost no papers concerned with
        Problem~\ref{ref:mainProblem}. This problem is mentioned
        in~\cite{Buczyska_Buczynski__border, Conner_Huang_Landsberg}. One may also ask 
        which ideals are limits of saturated and radical ideals. 
        The latter problem was investigated in~\cite{Mandziuk} in the $\mathbb{N}$-graded case.
        It gives two necessary conditions: a semicontinuity of tangent spaces type of statement
        \cite[Theorem~1.1]{Mandziuk} and a necessary condition in the case that $V(I)$ is contained in the 
        line~\cite[Theorem~2.7]{Mandziuk}. The latter statement is an immediate corollary of
        Theorem~\ref{ref:introObVanishes:thm} above.

        Theorem~\ref{ref:saturationDeformsWhenObFibVanishes:thm} is
        inspired by the works of Cid-Ruiz and
        Ramkumar~\cite{CidRuiz_Ramkumar__Fiber_full,
        CidRuiz_Ramkumar__Fiber_full_local} which in turn were inspired
        by~\cite{Conca_Varbaro__squareFree}. The main difference is that
        Cid-Ruiz and Ramkumar employ local cohomology and
        impose stronger conditions that guarantee that also the higher local
        cohomology groups $H^i$ deform with the ideal, while we only investigate
        whether $H^0_{\Irr}(S/I) = I^{\sat}/I$ deforms with $I$.
        This is especially important in the multigraded case, where
        $S/I^{\sat}$ has high dimension and need not even be Cohen-Macaulay,
        so intermediate cohomology may well be nonzero and not deform.
        Finally, our setting does not require standard $\mathbb{Z}$-grading and
            furthermore, by allowing intermediate ideals $J$ lying beetween
            $K$ and $K^{\sat}$ we obtain more flexibility. This is
            illustrated by Examples~\ref{exa:Z2_grading}-\ref{exa:intermediate_ideal}.

        Buczy{\'n}ska-Buczy{\'n}ski in an unpublished
        note~\cite{jabuSaturationOpen} gave a proof of a slightly weaker version
        of Theorem~\ref{ref:openness:introthm} based on the ideas of the
        first-named author.

\subsection*{Acknowledgements}

    We thank Alessandra Bernardi and Joseph Landsberg
    for helpful comments and conversations. We thank an anonymous referee for
    constructive suggestions regarding the presentation. We especially thank
    Jaros{\l}aw Buczy{\'n}ski for his very helpful comments.

\section{General results on multigraded ideals and Hilbert schemes}

Throughout the article, $\kk$ is a Noetherian ring of finite dimension. (On the first reading, any willing reader can assume
that $\kk := \kappa$ is their favourite field.)
The article makes frequent use of deformation theory. One source well aligned
with our results is~\cite[Chapter~5]{fantechi_et_al_fundamental_ag}. This
reference assumes that everything takes place over a field, but the proofs do
generalize in a straightforward way. For convenience of reader, we phrase
the main theorem of obstruction calculus explicitly as
Lemma~\ref{ref:obstructionCalculus:lem}.

\subsection{Conventions and notation}\label{ssec:setup}

    We fix a polynomial ring $S$ over $\kk$. For a $\kk$-algebra $A$ or a
    $\kk$-scheme $U$ we write $S_A$, $S_U$ instead of $S\otimes_{\kk} A$,
    $S\otimes_{\kk} \OO_U$, respectively.

        We assume that $\AA$ is a finitely generated abelian
        group and that $S$ is $\AA$-graded via a homomorphism
        $\mathbb{N}^{\#\mathrm{vars}(S)} \to \AA$. A \emph{homogeneous ideal} is an ideal
        homogeneous with respect to the $\AA$-grading.
        We remark that even when $\kk = \kappa$ is a field it may well happen that for some $\aa\in \AA$ the dimension
        of the $\kappa$-vector space $S_{\aa}$ is infinite (for example, take $S
        = \kappa[x, y]$, $\AA = \mathbb{Z}$, $\deg(x) = 1$, $\deg(y) = -1$, $\aa =
        \zerodeg$). This forces us to take care when defining familiar
        notions such as Hilbert functions.

        For any $\AA$-graded $S$-algebra $S'$ and any point $\Spec(L)\to
        \Spec(\kk)$, where $L$ is a field, the \emph{fiber} of $S'$ over $L$
        is $S'\otimes_{\kk} L$.
        If for every $\aa\in \AA$ the vector space $\left(S'\otimes_{\kk} L\right)_{\aa}$ has
        finite dimension, then the \emph{Hilbert function} of the
        fibre $H_{S'\otimes_{\kk} L}\colon \AA\to \mathbb{N}$ is given by
        \[
            H_{S'\otimes_{\kk} L}(\aa) = \dim_{L} \left( S'\otimes_{\kk} L \right)_{\aa}.
        \]
            If $\kk$ is a field, this
            definition agrees with the usual definition.

    We recall the terminology from~\cite{Haiman_Sturmfels__multigraded}.
    Fix a function $H\colon \AA \to \mathbb{N}$.
    For a $\kk$-scheme $U$ and a $\AA$-graded sheaf of ideals $I\subseteq
    S_U$, we
    say that $I$ is \emph{admissible with (finite) Hilbert function} $H$
    if for every $\ee\in \AA$ the $\OO_U$-module $\left((S_U)/I\right)_{\ee}$ is locally free of rank $H(\ee)$. We call a
    closed subscheme $\mathcal{Z} \subseteq \Spec(S)\times U$
    \emph{admissible with (finite) Hilbert function $H$} if its ideal
    sheaf is admissible with Hilbert function $H$.
    The multigraded Hilbert scheme $\Hilb^H$ is the scheme representing
    admissible ideals $I$.

    \newcommand{\Sk}{S_{\kappa}}
    Throughout the article, the symbol $\kappa$ will denote any field
    with a homomorphism $\kk\to\kappa$. The ideals of interest will lie in
    some $\Sk = S\otimes_{\kk} \kappa$.  Below we will use $\Sk$ without explicitly
    repeating that $\kappa$ is a field as above.
    For an admissible ideal
    $J\subseteq \Sk$ we define the shorthand notation $\Hilb_{J}$ for the $\Bbbk$-scheme
    $\Hilb^{H_{\Sk/J}}$.

    Likely, many readers are
    interested in the case where $\kk$ itself is a field. They may safely take
    $\kappa = \kk$, so that $\Sk = S$. The more general case is useful for
    example when considering deformations (with $\kk$ Artinian) and arithmetic
    questions (with $\kk = \mathbb{Z}$).

    \subsection{Saturations}\label{ssec:saturations}

        We fix a homogeneous ideal $\Irr\subseteq S$ which we will call the
        \emph{irrelevant ideal}.
        For a homogeneous ideal $I$ in an $\AA$-graded Noetherian $S$-algebra $S'$, its \emph{saturation} is the ideal
        \[
            I^{\sat} := \bigcup_{i\geq 0} (I : \Irr^i).
        \]
        Since $S'$ is Noetherian, we have $I^{\sat} = (I : \Irr^e)$ for all
        $e\gg 0$.
        The ideal $I$ is \emph{saturated} if $I = I^{\sat}$.
        The saturation of any
        ideal is saturated. Let $\Irr' := \Irr\cdot S'$. By prime
        avoidance~\cite[\href{https://stacks.math.columbia.edu/tag/00LD}{00LD},
        \href{https://stacks.math.columbia.edu/tag/00DS}{00DS}]{stacks-project},
        a homogeneous ideal $I\subseteq S'$ is saturated if
        an only if there exists a homogeneous $\ell\in \Irr'$ such that the multiplication
        by $\ell$ is injective on $S'/I$. For a homogeneous ideal $I\subseteq
        S'$ a
        \emph{transverse element} is any homogeneous
        $\ell\in \Irr'$ which is a nonzerodivisor on $S'/I^{\sat}$. For every
        such $\ell$, the ideal
        $I^{\sat}/I$ is the kernel of the localisation map $S'/I\to (S'/I)_{\ell}$.
        If $e$ is any integer such that $I^{\sat} = (I : \Irr^e)$, then
        $\ell^e$ is another transverse element and this element annihilates $I^{\sat}/I$.
        (In this article, the only $S'$ we consider are of the form
        $S_A$ for $A$ a $\kk$-algebra, trivially $\AA$-graded.)

        Proposition~\ref{ref:transverseElementDeforms:prop} is the technical part of the proof that saturation is an open
        property, see Theorem~\ref{ref:opennessOfSaturation:thm} below for
        motivation.

        \begin{lemma}\label{ref:Gotzmann:lem}
            Let $U$ be a Noetherian $\kk$-scheme of finite dimension and $Z \subset
            \Spec(S) \times U$ be a closed subscheme given by a
            homogeneous ideal sheaf. Denote by $p\colon Z\to U$ the projection.
            For every $u\in U$ consider the ``Hilbert function'' $H_u\colon
            \AA\to \mathbb{N} \cup \left\{ \infty \right\}$ defined
            by
            \[
                H_{u}(\aa) := \dim_{\kappa(u)} H^0(\OO_{p^{-1}(u)})_{\aa}.
            \]
            Then the set $\{H_u\ |\ u\in U\}$ is finite.
        \end{lemma}
        \begin{proof}
            We do induction on the dimension of $U$. For a given $U$, we
            replace it with $U_{\redd}$ and then by the disjoint union of the
            irreducible components of its
            affine open subschemes. For every such component $\Spec(A)$, let
            $\Spec(B) = p^{-1}(\Spec(A)) \subseteq Z$.

            By Generic Flatness
            (\cite[\href{https://stacks.math.columbia.edu/tag/051R}{Tag
            051R}]{stacks-project} or \cite[Theorem~14.4]{Eisenbud})
            there is
            a nonzero element $a\in A$ such that $B_{a}$ is a free
            $A_{a}$-module.

            Choose a degree $\ee\in \AA$. The $A_a$-module $(B_a)_{\ee}$ is a
            direct summand of $B_a$, hence is a projective $A_a$-module.
            Choose a point $\pp\in \Spec(A_a)$. The module
            $((B_a)_{\pp})_{\ee}$ is a projective $(A_a)_{\pp}$-module, hence
            is free
            by~\cite[\href{https://stacks.math.columbia.edu/tag/0593}{Tag
            0593}]{stacks-project}. Recall that $\Spec(A)$ corresponds to an
            irreducible scheme, so it has a generic point $\eta$. The points
            $\pp$, $\eta$ lie in $\Spec((A_{a})_{\pp})\subseteq \Spec(A)$. The
            freeness of $((B_a)_{\pp})_{\ee}$ implies that the Hilbert
            functions of $p^{-1}(\pp)$ and $p^{-1}(\eta)$ agree at position
            $\ee$. The point $\pp$ and the position $\ee\in\AA$ are arbitrary,
            so this means that the Hilbert function of $p^{-1}(u)$ is
            independent of the choice of $u\in (a\neq0)\subseteq
            \Spec(A)$.

            Repeating this argument for every component of $U$, we
            obtain an open dense subset $U'\subseteq U$ with only finitely many possible
            Hilbert functions, at most as many as the number of
            components.
            By induction, the set of Hilbert functions for the fibers over
            $U\setminus U'$ is finite.  This concludes the proof.
        \end{proof}
        \begin{proposition}\label{ref:transverseElementDeforms:prop}
            Let $f\in S$ be a homogeneous element. Let $U$ be a
            locally Noetherian $\kk$-scheme. Assume that $U$ has an open cover
            by schemes of finite dimension.
            Let $Z \subset \Spec (S) \times U$ be an admissible closed
            subscheme with Hilbert
            function $H$. Let $p\colon Z\to U$ be the projection.
            Then the subset
            \begin{equation}\label{eq:goodfibers}
                U' := \left\{ u\in U\ |\ f \mbox{ is a nonzerodivisor on
                } H^0(\OO_{p^{-1}(u)}) \right\} \subseteq U
            \end{equation}
            is open. The multiplication by $f$ on
            $\OO_{p^{-1}(U')}$ is injective and $V(f)\cap p^{-1}(U')$ is flat
            over $U'$.
        \end{proposition}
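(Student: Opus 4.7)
The plan is to reduce to $U = \Spec(A)$ affine Noetherian, identify $U'$ as an intersection of upper-semicontinuity loci for the cokernel of multiplication by $f$, and use Lemma~\ref{ref:Gotzmann:lem} to trim the intersection to finitely many open conditions. First, since the claims are local on $U$, I would take $U = \Spec(A)$ with $A$ Noetherian and set $B := \Gamma(Z, \OO_Z)$. Each graded piece $B_\aa$ is a quotient of $S_\aa \otimes_{\kk} A$, hence finitely generated over $A$; combined with $A$-flatness of $B$, each $B_\aa$ is locally free of some locally constant rank $h(\aa)$, which I may assume constant on each connected component of $U$. If $S$ is genuinely $\mathbb{N}^r$-graded, I would coarsen via the linear functional $(a_1,\ldots,a_r)\mapsto a_1+\cdots+a_r$ to reduce to the $\mathbb{N}$-graded case; this preserves positivity of $\deg f$, and by additivity the condition ``$f\cdot$ is injective on the $u$-fiber in every $\mathbb{N}^r$-degree'' is equivalent to the analogous condition in every coarsened $\mathbb{N}$-degree.

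Let $C := B/fB$ as a graded $A$-module. For each $d$ the map $B_{d - \deg f} \xrightarrow{f\cdot} B_d$ is a morphism of locally free $A$-modules of constant ranks, and upper semi-continuity of fiber dimension gives
\[
    \dim_{\kappa(u)}\bigl(C_d \otimes_A \kappa(u)\bigr) \geq h(d) - h(d - \deg f),
\]
with equality precisely when $f\cdot$ is injective on the $u$-fiber in degree $d - \deg f$. Hence the open sets
\[
    U_d' := \bigl\{\, u \in U : \dim_{\kappa(u)}(C_d \otimes \kappa(u)) \leq h(d) - h(d - \deg f)\,\bigr\}
\]
satisfy $U' = \bigcap_d U_d'$.

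The main obstacle is that this is a priori an infinite intersection. I would overcome it by applying Lemma~\ref{ref:Gotzmann:lem} to the family $Z \cap V(f) \to U$: only finitely many Hilbert functions $h_{C_u}$ arise among the fibers, and exactly one attains the pointwise minimum $h^*(d) := h(d) - h(d - \deg f)$ (namely, on the fibers lying in $U'$). For each of the other finitely many occurring Hilbert functions, I pick one degree $d_i$ where it strictly exceeds $h^*$; then $U' = \bigcap_i U_{d_i}'$ is a finite intersection of opens, hence open.

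Finally, on $U'$ each map $B_{d - \deg f} \xrightarrow{f\cdot} B_d$ is a morphism of locally free sheaves that is injective on every fiber, so its finitely generated kernel has vanishing fibers and therefore vanishes by Nakayama; this gives the asserted injectivity of $f\cdot$ on $\OO_{p^{-1}(U')}$. Moreover, $C_d|_{U'}$ then sits in a short exact sequence of locally free sheaves of constant ranks and so has locally constant fiber dimension; being finitely generated it is locally free, hence $A$-flat. Summing over $d$ yields flatness of $V(f) \cap p^{-1}(U')$ over $U'$.
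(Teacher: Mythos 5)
Your argument is correct in substance and follows essentially the same route as the paper: both reduce to the $\mathbb{N}$-graded affine Noetherian case, and both rest on Lemma~\ref{ref:Gotzmann:lem} to cut the a priori infinite intersection of degreewise open conditions down to a finite one. Your formulation via the semicontinuity loci $U_d'$ is somewhat more direct than the paper's, which localizes at a point $u_0$, invokes~\cite[Lemma~A.7]{Sernesi__Deformations} to obtain split injectivity of each $(\mu_f)_e$ over $A_{\mm}$, and then spreads this out to a basic open $\Spec(A_s)$ by controlling the finitely many degrees isolated via the Gotzmann lemma. The only imprecision is in your closing step: from injectivity of $f\cdot$ on every fiber over $U'$ it is not immediate that $\ker(B_{d-\deg f}\to B_d)$ has vanishing fibers (the natural map $\ker\otimes\kappa(u)\to B_{d-\deg f}\otimes\kappa(u)$ need not be injective, so Nakayama does not apply directly), and ``constant fiber dimension $\Rightarrow$ locally free'' requires $U'$ reduced. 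The clean tool is~\cite[Lemma~A.7]{Sernesi__Deformations} (equivalently, the unit-minor argument for a map of finite free modules that becomes injective after $\otimes\kappa$): applied at each local ring of a point of $U'$, it gives split injectivity with free cokernel, which yields both remaining assertions at once and is exactly how the paper concludes.
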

        \begin{proof}
            \def\HilbFunc{\mathcal{H}}%
            \def\II{\mathcal{I}}%
            Everything is local on $U$, so we assume $U =
            \Spec(A)$ is Noetherian of finite dimension.

            We have $Z =
            \Spec(B)$ for some $A$-algebra $B$.
            Pick a point $u_0\in U$, let $\pp \subset A$ be the prime ideal
            corresponding to $u_0$ and $\kappa_0 = A_{\pp}/\pp A_{\pp}$. Suppose that $f$ is a
            nonzerodivisor on $B\otimes_A \kappa_0 = H^0(\OO_{p^{-1}(u_0)})$.
            Let $T = A\setminus
            \pp$ be the multiplicatively closed subset, so that $T^{-1}A =
            A_{\pp}$.
            Let $\dd = \deg(f)$ and let $\mu_f\colon B\to B$ be the multiplication
            by $f$. Fix any element $\ee\in\AA$.
            Consider the map $(\mu_f)_{\ee}\colon B_{\ee}\to B_{\dd+\ee}$. By assumption, the
            map
            \[
                (\mu_f)_{\ee}\otimes_{A} \kappa_0\colon B_{\ee} \tensor_A \kappa_0 \to B_{\dd+\ee} \tensor_A
                \kappa_0
            \]
            is injective. By~\cite[Lemma~A.7]{Sernesi__Deformations} the map
            $T^{-1}(\mu_f)_{\ee}\colon T^{-1}B_{\ee}\to T^{-1}B_{\dd+\ee}$
            of $A_{\pp}$-modules is split injective; in particular its
            cokernel $T^{-1}(B_{\dd+\ee})/fT^{-1}B_{\ee}$ is a free
            $A_{\pp}$-module
            because it is a direct summand of the free $A_{\pp}$-module
            $T^{-1}B_{\dd+\ee}$.
            Since localization is exact, the
            module $T^{-1}(B_{\dd+\ee})/fT^{-1}B_{\ee}$ is
            isomorphic to $(T^{-1}(B/fB))_{\dd+\ee}$, so the latter is a free
            $A_{\pp}$-module as well.
            This holds for any $\ee\in\AA$, so, by
            reindexing, we conclude that $(T^{-1}(B/fB))_{\ee}$ is a free
            $A_{\pp}$-module for every $\ee\in \AA$.

            Consider the degree decomposition
            \[
                T^{-1}(B/fB) = \bigoplus_{\ee\in \AA} \left( T^{-1}(B/fB)
                \right)_{\ee}.
            \]
            The summands are finitely generated free $A_{\pp}$-modules, so
            $T^{-1}(B/fB)$ is a free $A_{\pp}$-module (not necessarily
            finitely generated!).
            We now prepare to ``smear out'' the freeness of $T^{-1}(B/fB)$ to
            the freeness of some $(B/fB)_s$, $s\in T$.
            Each individual
            factor can be ``smeared out'' to a free $A_s$-module for some
            $s\in T$, see~\cite[\href{https://stacks.math.columbia.edu/tag/00NX}{Tag 00NX}]{stacks-project}.
            The issue is that the direct sum has possibly an infinite number
            of nonzero summands, so we need to take care to find a common $s$.

            Let $H'$ be the Hilbert function of $(B/fB)\tensor_A \kappa_0$,
            so that for every $\ee\in \AA$ we have
            \begin{equation}\label{eq:Hprim}
                H'(\ee) = H(\ee) - H(\ee-\dd).
            \end{equation}
            By Lemma~\ref{ref:Gotzmann:lem}, the set $\HilbFunc$ of possible Hilbert
            functions for the fibers of $B/fB$ is finite. Fix a finite subset
            $\II\subseteq \AA$ such that
            every Hilbert function $H''\in \HilbFunc\setminus \{H'\}$ differs from
            $H'$ on some position $\ee\in \II$.
            Fix $s\in T$ such that $(B_{\ee}/fB_{\ee-\dd})_s$ is a free $A_s$-module for every
            $\ee \in \II$.

            For every $u\in \Spec(A_s)$ with residue field $\kappa$,
            the Hilbert function $\tilde{H'}$ of
            $(B/fB)\tensor_A \kappa$ is an element of $\HilbFunc$ and agrees
            with $H'$ for all arguments $\ee\in \II$, hence is equal to $H'$.
            For every $\ee\in \AA$,
            in the right-exact sequence
            \[
                \begin{tikzcd}
                    B_{\ee-\dd}\tensor_A \kappa\ar[r, "\mu_{f}\tensor_A
                    \kappa"] & B_{\ee}\tensor_A
                    \kappa \ar[r] & (B/fB)_{\ee}\tensor_A \kappa\ar[r] & 0
                \end{tikzcd}
            \]
            the dimensions of the $\kappa$-vector spaces are $H(\ee-\dd)$,
            $H(\ee)$,
            $H'(\ee)$. By~\eqref{eq:Hprim}, we have $H'(\ee) = H(\ee) -
            H(\ee-\dd)$ so
            this sequence is also left-exact. This proves that $u$ belongs to
            the $U'$ from the statement. Since $u$ was chosen arbitrarily, we have
            $\Spec(A_s)\subseteq U'$ which proves that $U'$ is open. The injectivity of $(\mu_f)_s$ and
            flatness of $(B/fB)_s$
            can be checked after localizing to every maximal
            ideal of a point $u\in \Spec(A_s)$ and then follow from the argument given
            above for $u_0$.
        \end{proof}

        \begin{theorem}\label{ref:opennessOfSaturation:thm}
            For any Hilbert function $H\colon \AA\to \mathbb{N}$,
            the set of points of $\Hilb^H$ which
            correspond to saturated ideals is open; we denote the resulting
            open subscheme by $\Hilb^{H, \sat}$. Moreover, for every
            morphism $\varphi\colon T\to \Hilb^H$ the following are equivalent
            \begin{enumerate}
                \item the (set-theoretic) image of $\varphi$ is contained in $\Hilb^{H,
                    \sat}$,
                \item the fibers of the universal ideal sheaf pulled back to
                    $T$ are saturated ideals.
            \end{enumerate}
            If these conditions holds, then the universal ideal sheaf pulled
            back to $T$ is saturated.
        \end{theorem}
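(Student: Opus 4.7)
The plan is to realize $\Hilb^{H,\sat}$ as a union of open sets produced by Proposition~\ref{ref:transverseElementDeforms:prop}. For each homogeneous $f \in S_+$, I would apply that proposition with $U = \Hilb^H$ and $Z \subseteq \Spec(S) \times \Hilb^H$ the universal closed subscheme, obtaining an open set
\[
    U_f = \left\{ u \in \Hilb^H \ \middle|\ f \text{ is a nonzerodivisor on } \OO_{p^{-1}(u)} \right\}.
\]
A minor technical point to address first: although the total space $Z \to \Hilb^H$ is not of finite type, each graded piece of its structure sheaf is a finitely generated locally free $\OO_{\Hilb^H}$-module of rank $H(e)$ by the defining property of the multigraded Hilbert scheme, and this is the only flatness actually used in the proof of Proposition~\ref{ref:transverseElementDeforms:prop}.

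Next I would identify $\bigcup_f U_f$ with the saturated locus of $\Hilb^H$. The inclusion $U_f \subseteq \Hilb^{H,\sat}$ is immediate from the prime avoidance characterization recalled in \S2.3: a homogeneous element of $S_+$ that is a nonzerodivisor on a fiber witnesses that the fiber is saturated. Conversely, given a saturated ideal $I$, prime avoidance supplies a homogeneous $\ell \in S_+$ that is a nonzerodivisor on $S/I$, placing $[I]$ into $U_\ell$. Hence the saturated locus equals $\bigcup_f U_f$ and inherits an open subscheme structure from $\Hilb^H$, which I would take as the definition of $\Hilb^{H,\sat}$.

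For the equivalence of (1) and (2), I would use that pullback commutes with restriction to fibers: the fiber over $t \in T$ of the pulled-back universal ideal sheaf is the same ideal in $S \otimes_\kk \kappa(t)$ as the fiber over $\varphi(t)$ of the universal ideal sheaf on $\Hilb^H$. Hence $\varphi(t) \in \Hilb^{H,\sat}$ if and only if the fiber over $t$ of the pulled-back sheaf is saturated, and the equivalence follows by quantifying over $t \in T$. The substantive content is all packaged into Proposition~\ref{ref:transverseElementDeforms:prop}; once it is in hand, the rest is a formal combination of prime avoidance and the universal property of open immersions, with the only obstacle being the verification that the multigraded universal family satisfies the flatness hypothesis used in the proof of that proposition.
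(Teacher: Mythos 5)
Your argument for openness is essentially the same as the paper's: the paper fixes a saturated ideal $[I]$, picks a homogeneous $f$ witnessing its saturation, and invokes Proposition~\ref{ref:transverseElementDeforms:prop} to get an open neighbourhood of $[I]$ inside the saturated locus; you take the union of the loci $U_f$ over all homogeneous $f\in S_+$. These are the same decomposition seen from two sides, and your observation that the flatness hypothesis of Proposition~\ref{ref:transverseElementDeforms:prop} is applied degree-by-degree to the universal family is a legitimate and useful remark.

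However, there is a gap in your treatment of the equivalence of (1) and (2). The fiber of the pulled-back universal ideal sheaf over $t\in T$ lives in $S\otimes_\kk\kappa(t)$, while the fiber of the universal ideal sheaf over $\varphi(t)$ lives in $S\otimes_\kk\kappa(\varphi(t))$; these are ideals in different rings, with $\kappa(\varphi(t))\subseteq\kappa(t)$ a (possibly proper) field extension and the former obtained from the latter by base change. Saying they are ``the same ideal'' and inferring the equivalence by ``quantifying over $t$'' silently assumes that saturatedness of an ideal is stable under field extension of the base. This is exactly what the paper proves in the second paragraph of its proof (after reducing to $T=\Spec(L)$ for $L\supseteq\kk$ a field): one direction because $S\otimes_\kk K/I$ injects into $S\otimes_\kk L/(I\otimes_K L)$, the other because a transverse element remains a nonzerodivisor after the flat extension $K\to L$. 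This step is not ``formal'' and needs to be supplied; once it is, your argument closes.
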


        \begin{proof}
            \def\barL{\overline{L}}%
            By~\cite[Theorem~1.1]{Haiman_Sturmfels__multigraded}, the
            multigraded Hilbert scheme $\Hilb^H$ is quasi-projective over
            $\kk$, in particular it is locally of finite type over $\kk$. Since $\kk$ is Noetherian and of finite dimension, the
            scheme $\Hilb^H$ has an open cover by affine Noetherian schemes of
            finite dimension~\cite[Ex~6-7, p.~126]{Atiyah_Macdonald}.
            The universal family is, by definition, admissible with Hilbert
            function $H$, so
            Proposition~\ref{ref:transverseElementDeforms:prop} applies and
            yields openness.

            Once we proved openness, the claim about
            $\varphi\colon T\to \Hilb^H$ reduces immediately to the case of $T=\Spec(L)$, where $L$ is
            a field. Let $\kappa$ be the residue field of the point of
            $\Hilb^H$ given by $T\to \Hilb^H$, so that
            $\kappa\subseteq L$ is a field extension.
            Then the claim is
            that for a homogeneous ideal $I\subseteq S_\kappa$ and the ideal $I' = I
            \otimes_{\kappa} L\subseteq S_L$, obtained by a field
            extension, $I$ is saturated if and only if so is $I'$.

            Observe that $S_L/I'$ contains $S_{\kappa}/I$ and it is a free
            $S_{\kappa}/I$-module, since $L$ is a free $\kappa$-module.
            If $I'$ is saturated,
            then so is $I$ because of the containment. If $I$ is saturated, then a transverse element $\ell\in
            \Irr_{\kappa}$ is a nonzerodivisor on $S_{\kappa}/I$, hence also on the
            free module $S_{L}/I'$, so $I'$ is saturated.

            Finally, in order to show that the pulled back ideal sheaf is
            saturated, one may replace $T$
            by an open subset $U'\subseteq T$ as in Proposition~\ref{ref:transverseElementDeforms:prop} and apply the final 
            part of that Proposition.
        \end{proof}

        \newcommand{\Satbar}{\overline{\Sat}}%
        \begin{definition}
            We define the \emph{saturable locus} $\Satbar^H$ of $\Hilb^H$ as the closure
            of $\Hilb^{H, \sat}$. It is a union of irreducible components of
            $\Hilb^H$.
        \end{definition}

        As discussed in the introduction, it is interesting to
        understand the locus where a deformation of $I$ induces a deformation
        of $I^{\sat}$. To address this, we need to consider deformations of
        pairs $I\subseteq I^{\sat}$. The following subsection sets the stage
        for this (while~\S\ref{ssec:smoothnessOfProjections} contains the
        results).

    \subsection{Main diagram}\label{ssec:maindiagram}

        For (any field $\kappa$ and) ideals $K\subseteq J \subseteq \Sk$
        the two long exact sequences coming from $\Hom_{\Sk}(K, -)$ and
        $\Hom_{\Sk}(-,\frac{\Sk}{J})$ give the following \emph{main diagram}
        \[
            \begin{tikzcd}[column sep=small]
                &&&0\ar[d]  \\
                &&&\Hom_{\Sk}\left( K, \frac{J}{K} \right)_{\zerodeg}\ar[d]\\
                && \TgFlag\ar[r]\ar[d]\ar[dr, phantom, "\usebox\pullback" ,
                very near start, color=black] &\Hom_{\Sk}\left( K, \frac{\Sk}{K}
                \right)_{\zerodeg}\ar[d]\\
                0\ar[r] & \Hom_{\Sk}\left( \frac{J}{K}, \frac{\Sk}{J} \right)_{\zerodeg}\ar[r] &
                \Hom_{\Sk}\left( J, \frac{\Sk}{J} \right)_{\zerodeg} \ar[r] & \Hom_{\Sk} \left( K,
                \frac{\Sk}{J}
                \right)_{\zerodeg} \ar[r]\ar[d] & \Ext^1_{\Sk}\left( \frac{J}{K},
                \frac{\Sk}{J} \right)_{\zerodeg} \ar[rd, bend left=30]\\
                &&& \Ext^1_{\Sk}\left(K, \frac{J}{K}\right)_{\zerodeg}\ar[rd, bend
                right=30] & \ObFlag\ar[r]\ar[d]\arrow[dr, phantom,
                "\usebox\pullback" , very near start, color=black] &
                \Ext^1_{\Sk}\left( J,
                \frac{\Sk}{J} \right)_{\zerodeg}\ar[d]\\
                &&&& \Ext^1_{\Sk}\left(K, \frac{\Sk}{K}\right)_{\zerodeg}\ar[r] & \Ext^1_{\Sk}\left( K,
                \frac{\Sk}{J} \right)_{\zerodeg}.
            \end{tikzcd}
        \]
        We define the spaces $\TgFlag$ and $\ObFlag$ as pullbacks of vector
        spaces: their elements are pairs of elements which map to the same
        element. Their names are justified by the following theorem. Recall
        that the flag multigraded Hilbert scheme $\Hilb_{K\subseteq J}\to
        \Spec(\kk)$
        parameterizes pairs of ideals $K'\subseteq J'\subseteq \Sk$ with Hilbert functions
        $H_{\Sk/K'} = H_{\Sk/K}$ and $H_{\Sk/J'} = H_{\Sk/J}$, respectively, for every $\kk\to
        \kappa$.
        The fibre of $\Hilb_{K\subseteq J} \to \Spec(\kk)$ over the point
        $\Spec(\kappa)\to \Spec(\kk)$ is the Hilbert scheme parameterizing
        $K\subseteq J \subseteq \Sk$, where $\Sk$ is fixed. (If $\kk$ was a
        field and $\kappa = \kk$, then the fibre is equal to
        $\Hilb_{K\subseteq J}$.)
        The tangent space to this fiber at $[K\subseteq J]$ is $\TgFlag$, see, for
        example~\cite[Theorem~4.10]{Jelisiejew__Elementary}.
        The flag Hilbert scheme is
        a closed subscheme of $\Hilb_K \times \Hilb_J$, so we obtain
        projections
        \begin{equation}\label{eq:Hilbs}
            \begin{tikzcd}
                \Hilb_K & \Hilb_{K\subseteq J} \ar[r, "\pr_J"]\ar[l, "\pr_K"'] & \Hilb_J.
            \end{tikzcd}
        \end{equation}
        Let
        \[
            \psi\colon \Hom_{\Sk}(K,\Sk/K) \oplus \Hom_{\Sk}(J, \Sk/J)\to
            \Hom_{\Sk}(K, \Sk/J)
        \]
        be the sum of the natural maps $\Hom_{\Sk}(K, \Sk/K), \Hom_{\Sk}(J, \Sk/J)\to
        \Hom_{\Sk}(K, \Sk/J)$. We prove that if $\psi_{\zerodeg}$ is surjective,
        then $\Hilb_{K\subseteq J}\to \Spec(\kk)$ admits an obstruction theory at
        $[K\subseteq J]$ with obstruction space $\ObFlag$, see
        Theorem~\ref{ref:obstructionAtFlag:thm}. Since the proof is
        technical, we delegate it to an appendix.
        For some smoothness results, we will need
        the full tangent space to
        $[K\subseteq J]\in \Hilb_{K\subseteq J}$, that is, also its image in
        $\Spec(\kk)$. We can control it
        thanks to Lemma~\ref{ref:sernesi:lem}.

        Going back to the main diagram, we see that the ``linking'' group
        $\Ext^1_{\Sk}(K,
        J/K)_{\zerodeg}$ is the obstruction group for equivariantly deforming
        $K$ inside $J$, in other words, it is the obstruction group of the
        fiber
        $(\pr_J^{-1}([J]), [K])$, while $\Ext^1_{\Sk}(J/K, \Sk/J)_{\zerodeg}$ is the obstruction
        group for deforming $J/K$ in $\Sk/K$, hence the obstruction group of
        $(\pr_K^{-1}([K]), [J])$. We will be principally interested in the map
        $\pr_K$, hence we define the \emph{fiber obstruction group}
        \begin{equation}\label{eq:obfib}\tag{ObFib}
            \ObFib(K, J) := \Ext^1_{\Sk}\left( \frac{J}{K}, \frac{\Sk}{J}
            \right)_{\zerodeg}.
        \end{equation}
        \stepcounter{equation}

        The Main Theorem of obstruction calculus
        (see~Lemma~\ref{ref:obstructionCalculus:lem}) asserts that a map of pointed schemes
        which induces a surjection of tangent spaces and an injection of
        obstruction spaces (at the base point) is smooth (at the base point).
        It follows from the main diagram (and Lemma~\ref{ref:sernesi:lem}) that the vanishing of $\ObFib$
        implies \emph{both} these conditions for $\pr_K$ (when $\psi_{\zerodeg}$ is onto).
        We will exploit this observation
        in~\S\ref{ssec:smoothnessOfProjections}.

    \subsection{Toric varieties setup}\label{ref:toric_setup:subsec}

    In this subsection we restrict the general setup from~\S\ref{ssec:setup} to a
    more geometric one, namely that of toric varieties. Our general reference
    for toric varieties are~\cite{cox_book, fulton_toric}.
    Also~\cite{Maclagan_Smith__regularity, Maclagan_Smith__uniform}
    are very helpful; in this subsection we do little more
    than applying them. Finally, the reader uninterested in the general toric
    case may go straight into Example~\ref{ex:productOfProjectives}.

    Let us formally introduce our toric setup.
    Fix a smooth complete fan $\Sigma$ corresponding to a \emph{projective} toric
    variety. We take $S$ to be a polynomial ring over $\kk$ with variables
    given by the rays of $\Sigma$. The irrelevant ideal $\Irr = \Irr(\Sigma)$ is the monomial ideal
    generated by elements $\prod\{x_i\ |\ i\not\in \sigma\}$ where $\sigma$ ranges over the maximal
    cones of $\Sigma$, see~\cite[p.207]{cox_book}. The grading is given by the
    torsion-free abelian group $\AA := \Pic(\Sigma)$, where $\Pic(\Sigma)$ is defined as
    in~\cite[Theorem~4.2.1]{cox_book}. To further distinguish this setup from
    the general one, we will speak about $\Pic(\Sigma)$-homogeneous ideals,
    rather than abbreviate it to homogeneous ideals.

    Arguing as in~\cite{cox_book}, but relatively over $\kk$,
    we obtain a smooth projective morphism $X \to \Spec(\kk)$ with toric
    fibers. All the fibers have the same Nef cone in $\AA$, to which we refer
    simply as $\Nef(X)$. Similarly, we refer to $\Pic(X)$ and $\Irr(X)$,
    rather than $\Pic(\Sigma)$, $\Irr(\Sigma)$.

    \begin{example}\label{ex:productOfProjectives}
        Take $\Sigma$ to be the fan of the product of projective spaces
        $\mathbb{P}^{n_1}\times \cdots \times \mathbb{P}^{n_s}$. In this case,
        the polynomial ring $S$ is
        \[
            S = \kk[\alpha_0, \ldots ,\alpha_{n_1},\beta_0, \ldots
            ,\beta_{n_2}, \gamma_0,  \ldots , \gamma_{n_3},\ldots],
        \]
        the grading is given by $\deg(\alpha_{\bullet}) = (1, 0,0, \ldots
        ,0)$, $\deg(\beta_{\bullet}) = (0,1,0, \ldots )$ and so on and the
        irrelevant ideal is
        \[
            \Irr\left( \mathbb{P}^{n_1}\times \cdots \times
            \mathbb{P}^{n_s} \right) = \left( \alpha_0, \ldots , \alpha_{n_1}\right)\cdot \left(
            \beta_0, \ldots ,\beta_{n_2} \right)\cdot  \ldots.
        \]
        This example is the most important in terms of applications to border
        apolarity. Even the case $s = 1$ is useful.
    \end{example}

    We now ask how far is a $\Pic(X)$-homogeneous ideal $I\subseteq \Sk$ from its saturation. In general, this
    is a subtle question, see~\cite{Maclagan_Smith__regularity,
    Maclagan_Smith__uniform}.
    For any ideal $I$, we have the following result.

        \newcommand{\nn}{\mathbf{n}}%
		\newcommand{\uu}{\mathbf{u}}%

	\begin{lemma}\label{lem:ideal_agrees_with_sat}
	If $I$ is a $\Pic(X)$-homogeneous ideal of $\Sk$, then there exists $\nn\in \Nef(X)$ such that
	$I_\aa = I^{\sat}_\aa$ for every $\aa \in  \nn + \Nef(X)$.
	\end{lemma}	
	\begin{proof}
	Let $M  =I^{\sat}/I$. By \cite[Corollary~3.8]{Maclagan_Smith__regularity}, there exists $\nn\in \Nef(X)$ such that $H^i_B(M)_{\nn+\uu} = 0$ for all $		\uu\in \Nef(X)$. In particular, $H^0_B(M)_{\nn+\uu} = 0$ which by the definitions of zeroth local cohomology and saturation implies that $M_{\nn+\uu} = 0$.
	\end{proof}

        \begin{proposition}\label{ref:projectionFromSatLocallyClosed:prop}
            Let $I$ be a homogeneous ideal of $\Sk$ and $I^{\sat}$ be its
            saturation. Let
            \[
                p := \pr_I\colon \Hilb_{I\subseteq I^{\sat}}\to \Hilb_I,
            \] then $p$ is a locally closed immersion near $[I\subseteq
            I^{\sat}]$.
            More precisely, let $U$ be the open neighborhood of $[I\subseteq
            I^{\sat}]\in \Hilb_{I\subseteq I^{\sat}}$ that consists of pairs
            $[K\subseteq J]$ with $J$ saturated and let $Z = \Hilb_{I\subseteq
            I^{\sat}} \setminus\; U$.
            Then $p|_U\colon U\to \Hilb_I\setminus\; p(Z)$ is a closed
            immersion.
        \end{proposition}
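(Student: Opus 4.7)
\medskip
\noindent\textbf{Proof proposal.} The plan is to prove that $p|_U$ is a proper monomorphism onto $\Hilb_I\setminus p(Z)$ and then invoke the fact that a proper monomorphism between locally Noetherian schemes is a closed immersion. First I record the easy global facts. The flag Hilbert scheme $\Hilb_{I\subseteq I^{\sat}}$ is a closed subscheme of $\Hilb_I\times \Hilb_{I^{\sat}}$ and hence projective; thus $p$ is proper and $p(Z)$ is closed in $\Hilb_I$. The subset $U$ is open: it is the $\pr_J$-preimage of $\Hilb^{H_{S/I^{\sat}},\sat}\subseteq \Hilb_{I^{\sat}}$, which is open by Theorem~\ref{ref:opennessOfSaturation:thm}. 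In particular $[I\subseteq I^{\sat}]\in U$.

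The core of the proof is a uniqueness claim: for every $T$-point $[\mathcal{K}\subseteq \mathcal{J}]$ of $U$ the ideal $\mathcal{J}$ is determined by $\mathcal{K}$. The set-theoretic version (for $T=\Spec(\kk)$) yields $p(U)\cap p(Z)=\emptyset$: any $J'$ in a fiber over $[K]\in p(U)$ satisfies $(J')^{\sat}=K^{\sat}$ (both are saturated ideals, and the Hilbert function coincidence $H_{S/J'}=H_{S/I^{\sat}}$ forces their equality with $K^{\sat}$ in all large enough degrees), and then $J'\subseteq K^{\sat}$ together with $H_{S/J'}=H_{S/K^{\sat}}$ forces $J'=K^{\sat}$. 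Thus $p^{-1}(\Hilb_I\setminus p(Z))=U$ and $p|_U\colon U\to \Hilb_I\setminus p(Z)$ is proper as a base change of the proper map $p$. Applied in families, uniqueness becomes precisely the monomorphism property.

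To prove uniqueness of $\mathcal{J}$ given $\mathcal{K}$ I would split the argument into high and low degrees. Fix $e_0$ with $H_{S/I}(e)=H_{S/I^{\sat}}(e)$ for all $e\geq e_0$. Reducing to $T$ Noetherian, for each $e\geq e_0$ the sub-bundles $(\mathcal{K})_e\subseteq (\mathcal{J})_e\subseteq S_e\otimes \mathcal{O}_T$ are locally free of the same rank, so the surjection $(S/\mathcal{K})_e\twoheadrightarrow (S/\mathcal{J})_e$ between locally free sheaves of equal rank has finitely generated and fiberwise zero kernel; Nakayama gives $(\mathcal{J})_e=(\mathcal{K})_e$, so $\mathcal{J}$ agrees with $\mathcal{K}$ in degrees $\geq e_0$. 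For degrees $e<e_0$, fiberwise saturatedness of $\mathcal{J}$ together with Proposition~\ref{ref:transverseElementDeforms:prop} let us choose, locally on $T$, an element $\ell\in S_+$ which is a nonzerodivisor on $S/\mathcal{J}$ along the whole family. Then for $N$ with $e+N\deg\ell\geq e_0$ and any $s\in S_e\otimes \mathcal{O}_T$,
\[
s\in \mathcal{J}\ \iff\ \ell^N s\in \mathcal{J}\ \iff\ \ell^N s\in \mathcal{K},
\]
where the first equivalence is transversality of $\ell$ and the second is the previous step applied in degree $e+N\deg\ell$. The rightmost condition recovers $\mathcal{J}_e$ from $\mathcal{K}$ alone, finishing the uniqueness proof.

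The main obstacle is this family version of uniqueness: set-theoretically the argument is elementary, but globally two sub-bundles of $S\otimes \mathcal{O}_T$ agreeing on every fiber need not coincide (nilpotents in $\mathcal{O}_T$ can pull them apart). The Nakayama argument on locally free quotients of equal rank handles the high degrees, and the ideal-quotient characterisation above handles the remaining finitely many low degrees; both steps rely only on $\mathcal{J}$ being fiberwise saturated with the prescribed Hilbert function, which is the defining property of $U$. Combined with the properness established at the outset, this delivers the claimed closed immersion.
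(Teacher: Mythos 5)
Your proposal is correct and reaches the same endpoint via a genuinely different route. The paper also begins by establishing that $p|_U$ is a proper morphism onto $\Hilb_I\setminus p(Z)$, via the same observation that $p^{-1}(p(Z))=Z$, and then invokes the fiber criterion for a proper morphism to be a closed immersion (Stacks \href{https://stacks.math.columbia.edu/tag/04XV}{Tag 04XV}, \href{https://stacks.math.columbia.edu/tag/05VH}{Tag 05VH}): it suffices to check that each fiber is empty or maps isomorphically to its image point. After reducing to a closed point over an algebraically closed field, the paper identifies the set-theoretic fiber, then shows its tangent space vanishes by feeding $\Hom_S\bigl((I')^{\sat}/I', S/(I')^{\sat}\bigr)=0$ into the main diagram of~\S\ref{ssec:maindiagram}. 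You instead prove the monomorphism property directly on arbitrary $T$-points: in degrees $e\geq e_0$ you recover $\mathcal{J}_e=\mathcal{K}_e$ by the rank/Nakayama argument, and in low degrees you recover $\mathcal{J}_e$ as $\{s : \ell^N s\in\mathcal{K}\}$ using a transverse element furnished locally on $T$ by Proposition~\ref{ref:transverseElementDeforms:prop}. This avoids the main diagram and the cohomological tangent-space computation at the cost of having to work over a general base; the one phrasing wrinkle is that $\ell$ is chosen using $\mathcal{J}$, so the claim that $\mathcal{K}$ ``alone'' recovers $\mathcal{J}_e$ needs the (easy) remark that any two candidates $\mathcal{J}_1,\mathcal{J}_2$ over $\mathcal{K}$ have identical fibers (by the set-theoretic argument), hence share transverse elements, after which your formula yields $\mathcal{J}_1=\mathcal{J}_2$. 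What the paper's route buys is a short, pointwise verification and a nice illustration of the flag obstruction formalism; what yours buys is an elementary module-theoretic proof that makes the ``$\mathcal{J}$ is the relative saturation of $\mathcal{K}$'' mechanism fully explicit in families.
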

        \begin{proof}
            First, the subset $U$ is equal to
            $\pr_{I^{\sat}}^{-1}(\Hilb_{I^{\sat}}^{\sat})$, hence indeed open
            by Theorem~\ref{ref:opennessOfSaturation:thm}.
            Observe that
            $p^{-1}(p(Z)) = Z$ as closed subsets.
            Suppose not, then there are
            points $[I'\subseteq J']\in U$ and
            $[I' \subseteq J]\in Z$, that is $J'$ is saturated and $J$ is not. By Lemma~\ref{lem:ideal_agrees_with_sat}
            the Hilbert functions of $I$ and $I^{\sat}$ are equal in all multidegrees far enough into the interior of the Nef cone.
            Therefore, so are the Hilbert functions of $I'$, $J$ and $J'$. It
            follows that these three ideals define the same scheme in
            $X$. As a result, we have $(I')^{\sat} = J^{\sat} =
            (J')^{\sat} = J'$.
            The Hilbert functions of $J$ and $J' =
            J^{\sat}$ are equal, so the inclusion $J\subseteq J^{\sat} = J'$ is an
            equality, hence $J = J'$.
            The proof of $p^{-1}(p(Z)) = Z$ is completed.

            The morphism $p$ is
            projective~\cite[Corollary~1.2]{Haiman_Sturmfels__multigraded} and
            so restricts to a \emph{projective}
            morphism
        \[
            p|_U\colon U = \Hilb_{I\subseteq I^{\sat}}\setminus\; p^{-1}(p(Z))
            \to \Hilb_I \setminus\; p(Z).
        \]
        We claim that this morphism is a closed immersion. By~
        \cite[\href{https://stacks.math.columbia.edu/tag/04XV}{Tag 04XV},
        \href{https://stacks.math.columbia.edu/tag/05VH}{Tag
        05VH}]{stacks-project} it is enough to prove that for every point
        $[I']\in \Hilb_I \setminus\; p(Z)$ the fiber
        $p^{-1}([I'])$
        is either empty or maps isomorphically to the point $[I']$.
		Let $\kappa_0$ be the residue field at $[I']$ and $\kappa$ be its algebraic closure.
        It is enough to prove the above claim about the fiber after field
        extension $\kappa_0 \to \kappa$~\cite[\href{https://stacks.math.columbia.edu/tag/02L4}{Tag
        02L4}]{stacks-project}. In particular, we may assume that 
        the base field is algebraically closed and that $[I']$ is closed.
        Consider the
        fiber $p^{-1}([I'])$.
        Arguing as above, we get that \emph{as a set} the fiber is
        \begin{itemize}
            \item a singleton $[I'\subseteq (I')^{\sat}]$ if
                $H_{\Sk/(I')^{\sat}} = H_{\Sk/I^{\sat}}$,
            \item empty if $H_{\Sk/(I')^{\sat}} \neq H_{\Sk/I^{\sat}}$.
        \end{itemize}
        Assume that the fiber is nonempty.
        Since $\Hom_{\Sk}((I')^{\sat}/I', \Sk/(I')^{\sat}) = 0$, by the main
        diagram~\ref{ssec:maindiagram} we get that the tangent map to $p$
        at $[I'\subseteq (I')^{\sat}]$ is injective. Hence the fiber has zero
        tangent space, so \emph{as a scheme} it is a point.
        \end{proof}

        \subsubsection{Relation to Maclagan-Smith's and Grothendieck's Hilbert
        schemes}\label{ssec:MaclaganSmithHilbertScheme}
        We now relate the above to a more geometric setup related to $X$.

        Let $P$ be a polynomial in rank $\Pic(X)$ variables.
        Maclagan and Smith constructed the Hilbert scheme $\Hilb_X^P$
        that parameterizes subschemes of $X$ with multigraded Hilbert
        polynomial equal to $P$, see~\cite[Thm.~6.2]{Maclagan_Smith__uniform}. When
        $X$ is a projective space, this agrees with Grothendieck's Hilbert scheme.
        We recall the construction,
        following~\cite[Theorem~6.2]{Maclagan_Smith__uniform}.

        Fix a $\Pic(X)$-graded polynomial $P$.
        By~\cite[Theorem~4.11]{Maclagan_Smith__uniform} there exists an
        element $\nn\in \Nef(X)$ such that for every $\aa\in \nn + \Nef(X)$ and for
        every subscheme
        $Z$ in a fiber $X_{\kappa}$ with multigraded Hilbert polynomial equal to $P$, we have
        \begin{equation}\label{eq:constantRank}
            \dim_{\kappa}(\Sk/I_Z)_{\aa} = P(\aa),
        \end{equation}
        where $I_Z$ is the saturated ideal of $Z$.

        Fix such an $\nn$ and consider a
        function $h\colon \Pic(X) \to \mathbb{N}$ such that $h(\aa) = P(\aa)$
        for all $\aa \in \nn + \Nef(X)$ and $h(\aa) = \operatorname{rank}_\kk S_\aa$ otherwise. Consider
        Haiman-Sturmfels multigraded Hilbert scheme for function $h$.
        Maclagan-Smith~\cite[Theorem~6.2]{Maclagan_Smith__uniform} prove that
        for any $\nn$ as above, the resulting scheme represents $\Hilb_X^P$.
        (There is a subtlety in this construction, related to the fact that
        there exist indices $\bb$ such that $S_{\bb}$ is nonzero and yet
        $\bb+\Nef(X)\not\subset \Nef(X)$. It will not be important for us, so
        we omit discussing it and refer to~\cite{Maclagan_Smith__uniform} for
        details.)

        For every Hilbert function $H\colon \Pic(X)\to \mathbb{N}$ such that
        there exists an associated multigraded Hilbert polynomial $P$, we
        have a morphism $\Hilb^H \to \Hilb^P(X)$,
        which is given by forgetting the degrees outside $\nn + \Nef(X)$.

        \begin{corollary}\label{ref:fixedHilbertFunctionLocallyClosed:cor}
            For every Hilbert function $H\colon \Pic(X)\to \mathbb{N}$ such that
            there exists an associated multigraded Hilbert polynomial $P$, the
            morphism
            \[
                p\colon \Hilb^{H, \sat}\to \Hilb^P(X)
            \]
            is a locally closed immersion.
        \end{corollary}
        \begin{proof}
            Fix an $\nn$ as above.  Take a point $[I']\in \Hilb^{H, \sat}$.
            Let $I := I'|_{\nn + \Nef(X)}$. The ideals $I'$ and $I$ define the
            same subscheme of $X$ and $I'$ is saturated, so $I' = I^{\sat}$.
            Maclagan-Smith's result implies that $\Hilb^P(X)$ is isomorphic to
            $\Hilb_I$.  The scheme $\Hilb^{H, \sat}$ is isomorphic to $U$ from
            Proposition~\ref{ref:projectionFromSatLocallyClosed:prop}. Thus
            the result follows from this Proposition.
        \end{proof}

    \subsection{Macaulay's inverse
    systems}\label{ssec:macaulaysInverseSystems}

    Throughout the paper, we will frequently consider surjections such as
    $\Sk/I\onto \Sk/I^{\sat}$ for a field $\kappa$ with $\kk \to
    \kappa$. We will also analyse the possible choices of the subideal $I$ for fixed
    $I^{\sat}$. It seems cleaner to do this dually: to consider superobjects
    $I^{\perp}\supseteq (I^{\sat})^{\perp}$ instead. Hence we employ
    Macaulay's inverse systems. They are a standard tool in the Artinian case and
    introduced in the higher-dimensional case at least
    in~\cite{Elias_Rossi__Gorenstein_for_higher_dims}. Here we loosely
    summarize exactly as much of the theory as we need.

    The idea is quite simple yet requires some notation. Given $\Sk =
    \kappa[\alpha_0,  \ldots ,\alpha_n]$ we
    define the dual graded vector space $\Sk^* = \kappadp[x_0, \ldots ,x_n]$ and
    make it into a graded $\Sk$-module by the \emph{contraction} action
    $\Sk\hook
    \Sk^*\to \Sk^*$ defined on a monomial $m\in \Sk^*$ by
    \[
        \alpha_i \hook m =
        \begin{cases}
            0 & \mbox{if } x_i\mbox{ does not divide } m,\\
            m/x_i & \mbox{otherwise}.
        \end{cases}
    \]
    For an ideal $I\subseteq \Sk$ we define $I^{\perp}\subseteq
    \Sk^*$ as $I^{\perp} =
    \left\{ f\in \Sk^*\ |\ I\hook f = 0 \right\}$. The subspace $I^{\perp}$ is
    an $\Sk$-submodule of $\Sk^*$. Similarly, for a submodule
    $M\subseteq \Sk^*$ we
    define an ideal $M^{\perp} \subseteq \Sk$ as the annihilator of $M$, namely
    $M^{\perp} = \left\{ \sigma\in \Sk\ |\ \sigma\hook M = 0 \right\}$.
    We have $(-)^{\perp\perp} = \id$, hence $(-)^{\perp}$ gives a bijection
    between graded ideals of $\Sk$ and graded $\Sk$-submodules of $\Sk^*$. This
    bijection reverses inclusions and is equivariant with respect to the usual
    $\GL_{n+1}$-actions. For an element $F\in \Sk^*$ we also write $F^{\perp}$
    as a shorthand for $(\Sk\hook F)^{\perp}$.

	\begin{remark}\label{ref:contraction_and_differentiation:rmk}
        For experts: the subscript in $\kappadp$ underlies the fact that $\Sk^*$ can
        be made into a divided powers ring on which $(\Sk)_1$ acts as differential
        operators (we will not use this structure).
    \end{remark}

    In characteristic zero (or large enough) a slightly different,
    equivalent, action by \emph{partial differentiation}
    is usually employed.  In this setup, we assume that $\kappa$ has
    characteristic zero, take the polynomial ring $\kappa[x_0, \ldots ,x_n]$ (note the lack of
    subscript $dp$) and make it into an $\Sk$ module where $\alpha_i$ acts as
    $\frac{\partial}{\partial x_i}$. We denote this action by $\circ$.
    There is an isomorphism $\kappa[x_0, \ldots ,x_n]\to \Sk^*$ of $\Sk$-modules that
    sends a monomial $x^{\aa}$ to $\aa!x^{\aa}$, and so all the duality
    claims for $\Sk, \Sk^*$ hold --- still in characteristic zero --- also for
    the pair $\Sk, \kappa[x_0, \ldots ,x_n]$. See~\cite[Appendix~A]{iakanev} or \cite{Jel_classifying} for more details.

	We use the following example in \S\ref{ssec:five} where we assume 
	that $\kappa$ has characteristic zero.
	Therefore, we present it under the same assumption and we consider the action 
	$\circ$ of $\Sk$ on $\kappa[x_0,\ldots, x_n]$.

    \begin{example}\label{ex:points}
		Assume that $\kappa$ has characteristic zero.         
        Let $L\in \spann{x_0, \ldots ,x_n}$ be a nonzero linear form and let $I(\{L\})$ be the homogeneous ideal of the point $[L]\in
        \Proj \kappa[\alpha_0, \ldots ,\alpha_n] \simeq  \mathbb{P}^{n}$. For every non-negative integer $i$ we have 
        \[
            I(\{L\})^\perp_i = \langle L^i \rangle.
        \]
        Similarly, if $L_1, \ldots ,L_r$ are pairwise different linear forms,
        then the equality 
        $I(\{L_1, \ldots ,L_r\})^\perp_i = \langle  L^i_j\ |\  1\leq j\leq
        r\rangle$ holds for every non-negative integer $i$.
    \end{example}

    For further use, we note the following result:
    \begin{proposition}\label{ref:HilbertFunctionFromLimits:prop}
        Let $F_1, \ldots ,F_s$ be homogeneous degree $\ee$ elements of $\Sk[t]$, where
        $\deg(t) = \zerodeg$. Assume that there are homogeneous degree $\ee$ elements $G_1$, \ldots
        ,$G_s\in \Sk[t]$
        such that
        \begin{enumerate}
            \item for every $i$ there exists $d_i\geq 0$ with $t^{d_i}G_i\in
                \spann{F_1, \ldots ,F_s}\Sk[t]$,
            \item\label{it:indep} $G_1|_{t=0}$, \ldots ,$G_{s}|_{t=0}$ are linearly
                independent. (we call them \emph{limit forms}).
        \end{enumerate}
        Then $F_1|_{t=\lambda}$, \ldots ,$F_{s}|_{t=\lambda}$ are linearly
        independent for general $\lambda$ and the limit of the subspaces
        $\spann{F_1, \ldots ,F_s}$ at $t\to 0$ is $\spann{G_1|_{t=0}, \ldots ,G_{s}|_{t=0}}$.
    \end{proposition}
    \begin{proof}
        Let $N\subseteq (\Sk)_{\ee}$ be a finite-dimensional
        $\kappa$-subspace spanned by all
        monomials in $F_1$, \ldots, $F_s$, $G_1$, \ldots, $G_s$.
        Let $M := \kappa[t]^{\oplus s}$ and let $\iota\colon M\to N[t]$ be the
        $\kappa[t]$-linear map that sends the $i$-th generator to $G_i$.
		Since $t$ is a nonzerodivisor on $N[t]$, it follows from~\ref{it:indep}
		that $\iota$ is injective and that $t$ is a nonzerodivisor in the
        $\kappa[t]$-module $N[t]/\im \iota$.
        By the classification of finitely generated $\kappa[t]$-modules there
        exists an $f\in \kappa[t]\setminus (t)$ such that after inverting $f$
        the homomorphism $\iota$ is
        injective and $(N[t]/\im \iota)_{f}$ is a free
        $\kappa[t]_{f}$ module.
        By this freeness it follows that $\iota|_{t=\lambda}$ is injective
        for every $\lambda$ with $f(\lambda)\neq 0$.
        This shows that for every nonzero such $\lambda$, the forms
        $(G_j|_{t=\lambda})_{j=1}^s$ are linearly independent. They are linear
        combinations of $(F_j|_{t=\lambda})_{j=1}^s$, which are thus linearly
        independent as well. The last statement is purely formal.
%
    \end{proof}

\subsection{Miscellaneous results}
In this subsection we gather two rather technical yet general results, which form
important parts of the arguments in~\S\ref{sec:examples}
and~\S\ref{ssec:smoothnessOfSaturated}, respectively.
\begin{lemma}[small tangent space implies large
        intersection, {\cite[Lemma~2.6]{Mandziuk}}]\label{ref:boundOnIntersectionFromTangentSpace:lem}
    Let $X$ be a scheme locally of finite type over a field $\kappa$ and $Z_1,
    Z_2\subseteq X$ be irreducible closed subschemes. Let $x\in Z_1\cap Z_2$
    be a $\kappa$-point. Then every irreducible component of $Z_1\cap Z_2$ has
    dimension at least
    \[
        \dim Z_1 + \dim Z_2 - \dim T_x X.
    \]
\end{lemma}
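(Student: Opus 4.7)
I would prove this by reducing to the classical intersection dimension inequality in a regular local ring via passage to the completion at $x$. Write $R := \mathcal{O}_{X,x}$ with maximal ideal $\mathfrak{m}$, and let $\mathfrak{p}_i \subset R$ be the prime ideal of $Z_i$. An irreducible component $C$ of $Z_1 \cap Z_2$ containing $x$ corresponds to a minimal prime $\mathfrak{q}$ of $\mathfrak{p}_1 + \mathfrak{p}_2$ in $R$, and since $Z_i$ and $C$ are integral schemes of finite type over $\kk$ with $x$ as a closed point, $\dim Z_i = \dim R/\mathfrak{p}_i$ and $\dim C = \dim R/\mathfrak{q}$. A component not containing $x$ is handled by localizing the argument at one of its $\kk$-points; I focus on the main case.

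By Cohen's structure theorem, the $\mathfrak{m}$-adic completion $\widehat{R}$ is a quotient of $P := \kk[[t_1, \ldots, t_N]]$, where $N = \dim_\kk \mathfrak{m}/\mathfrak{m}^2 = \dim T_x X$. Denoting by $J_i \subseteq P$ the preimage of $\mathfrak{p}_i \widehat{R}$, completion preserves Krull dimension, so $\dim P/J_i = \dim Z_i$. The key technical point is excellence: $R/\mathfrak{p}_i$ and $R/\mathfrak{q}$ are local excellent domains (as localizations of finite-type $\kk$-algebras modulo a prime), hence their completions are equidimensional. It follows that every minimal prime of $J_i$ has quotient of dimension $\dim Z_i$, and every minimal prime $\widehat{\mathfrak{q}}$ of $\mathfrak{q}\widehat{R}$ has quotient of dimension $\dim C$. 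Moreover, faithful flatness of $R \to \widehat{R}$ combined with the domain structure of $R/\mathfrak{q}$ shows that any such $\widehat{\mathfrak{q}}$ is itself minimal over $\mathfrak{p}_1 \widehat{R} + \mathfrak{p}_2 \widehat{R}$: any strictly smaller prime in $\widehat{R}$ containing this sum would contract to a prime of $R$ sitting strictly between $\mathfrak{p}_1 + \mathfrak{p}_2$ and $\mathfrak{q}$, contradicting minimality of $\mathfrak{q}$.

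Lifting $\widehat{\mathfrak{q}}$ to a prime $\mathfrak{Q} \subseteq P$ that is minimal over $J_1 + J_2$, and choosing minimal primes $\mathfrak{P}_i \subseteq \mathfrak{Q}$ of $J_i$, one checks that $\mathfrak{Q}$ is minimal over $\mathfrak{P}_1 + \mathfrak{P}_2$ --- otherwise a smaller prime would contain $J_1 + J_2$, contradicting minimality of $\mathfrak{Q}$. Serre's intersection inequality in the regular local ring $P$ then gives a height bound, which rewrites as $\dim P/\mathfrak{Q} \geq \dim P/\mathfrak{P}_1 + \dim P/\mathfrak{P}_2 - N$, i.e.\ $\dim C \geq \dim Z_1 + \dim Z_2 - \dim T_x X$. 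The principal obstacle throughout is the bookkeeping of how primes split under completion; this is precisely what the equidimensionality consequence of excellence handles.
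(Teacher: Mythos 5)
The paper does not supply its own proof of this lemma; it cites \cite[Lemma~2.6]{Mandziuk}, so I can only compare against the standard argument (which is almost certainly what the cited reference does).

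Your proof is correct and is the expected one: pass to the $\mathfrak{m}$-adic completion, use Cohen's structure theorem to realize $\widehat{\mathcal{O}}_{X,x}$ as a quotient of a power series ring $P$ in $N = \dim T_xX$ variables, and then invoke Serre's height inequality for minimal primes over a sum of primes in the regular local ring $P$. You correctly identify the one serious technical point: irreducibility of $Z_i$ and of the component $C$ may be lost under completion, and one needs every minimal prime of $J_i$ (resp.\ of $\mathfrak{q}\widehat R$) contained in $\mathfrak{Q}$ to have the right coheight. Your appeal to Ratliff's theorem (a universally catenary local domain, such as $\mathcal{O}_{Z_i,x}$, has equidimensional completion) is exactly the right input, and your verification that $\mathfrak{Q}$ is minimal over $\mathfrak{P}_1 + \mathfrak{P}_2$ is correct (the contraction $\mathfrak{p}'\cap R$ lands between $\mathfrak{p}_1+\mathfrak{p}_2$ and $\mathfrak{q}$, hence equals $\mathfrak{q}$, hence $\mathfrak{p}'\supseteq \mathfrak{q}\widehat R$ and $\mathfrak{p}'=\widehat{\mathfrak{q}}$; your phrase ``strictly between'' elides this small chain of implications but the idea is right).

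One genuine caveat: your dismissal of components of $Z_1\cap Z_2$ not containing $x$ --- ``handled by localizing the argument at one of its $\kk$-points'' --- does not work as stated. Localizing at a $\kk$-point $y$ of such a component only yields the bound $\dim Z_1 + \dim Z_2 - \dim T_yX$, and there is no reason for $\dim T_yX\leq \dim T_xX$; if $X$ is more singular along that component the bound obtained is strictly weaker. In fact the statement should be read (and is applied in this paper) as concerning components of $Z_1\cap Z_2$ \emph{passing through} $x$; as literally phrased it would require a separate argument or stronger hypotheses. Your main argument --- for the component through $x$ --- is complete and correct.
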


We will also need the following result about derivations.
Let $X$ be a $\kk$-scheme and $\varphi\colon \Spec(A)\to X$ be a morphism of
schemes. The $A$-module
\[
    \Hom_A(H^0(\varphi^*\Omega_{X/\kk}), A)
\]
is called
the \emph{module of (pulled back) vector fields on $\Spec(A)$}. We have the
following description.
\begin{lemma}[vector fields and cotangent module]\label{ref:vectorfields:lem}
    There is a bijection functorial in $A$ between the elements of the module
    of vector fields on $\Spec(A)$ and the set of morphisms of $\kk$-schemes
    $\varphi'\colon \Spec(A[\varepsilon]/\varepsilon^2)\to X$ which are equal
    to $\varphi$
    when restricted to $\Spec(A)$.
\end{lemma}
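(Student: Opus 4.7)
The plan is to establish this via the standard correspondence between derivations and first-order thickenings, combined with the universal property of Kähler differentials. The starting observation is that $\Spec(A)$ and $\Spec(A[\varepsilon]/\varepsilon^2)$ share the same underlying topological space, so any extension $\varphi'$ of $\varphi$ agrees with it as a map of topological spaces; only the structure sheaf map changes. Thus the problem is to parametrize lifts of the ring map $\varphi^{\#}$ to a $\kk$-algebra map with codomain $\mathcal{O}_{\Spec(A)}[\varepsilon]/\varepsilon^2$.

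First I would handle the affine case. Assume $X = \Spec(B)$ and write $\varphi^{\#}\colon B\to A$. Any lift to $B\to A[\varepsilon]/\varepsilon^2$ has the form $b\mapsto \varphi^{\#}(b) + \varepsilon\, D(b)$ for some $\kk$-linear map $D\colon B\to A$, and the multiplicativity condition is precisely the Leibniz rule of a $\kk$-derivation $D\colon B\to A$ (with $A$ viewed as a $B$-module via $\varphi^{\#}$). The universal property of Kähler differentials identifies such derivations with $B$-module maps $\Omega_{B/\kk}\to A$, equivalently with $A$-module maps $A\otimes_B \Omega_{B/\kk} = \varphi^{*}\Omega_{B/\kk}\to A$. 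This gives the bijection in the affine case, and the construction is visibly functorial in $A$ (replacing $A$ by $A'$ precomposes both sides with the relevant base change).

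To globalize, I would cover $X$ by affines $U_i = \Spec(B_i)$, set $V_i = \varphi^{-1}(U_i)$, and apply the affine case to get, for each $i$, a bijection between lifts of $\varphi|_{V_i}$ and elements of $\Hom_{\mathcal{O}_{V_i}}(\varphi^{*}\Omega_{U_i/\kk}, \mathcal{O}_{V_i})$. These local bijections are compatible on overlaps since both sides are determined by their behavior on sections of the sheaf $\varphi^{*}\Omega_{X/\kk}$. A global lift $\varphi'$ is, by descent, the same as a compatible family of local lifts, and on the other side, $A$-module maps $\varphi^{*}\Omega_{X/\kk}\to \mathcal{O}_{\Spec(A)}$ glue from the local data in the same way. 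Since $\Spec(A)$ is affine, taking global sections gives the stated module $\Hom_A(H^0(\varphi^{*}\Omega_{X/\kk}), A)$ of vector fields.

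The main technical nuisance is bookkeeping in the gluing step --- one must check that the identification between local lifts and local derivations is compatible with restriction to intersections and with the localization of Kähler differentials --- but this is routine and formal, and the quasi-coherence of $\varphi^{*}\Omega_{X/\kk}$ on the affine scheme $\Spec(A)$ ensures no derived terms intervene. Functoriality in $A$ then follows from the construction in each chart.
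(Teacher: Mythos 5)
Your proof is correct in its essential ideas, but it follows precisely the route that the paper explicitly declines — ``gluing on affine parts, but this seems messy to write down.'' The paper sidesteps the gluing entirely: it sets $\cA := \varphi_*\OO_{\Spec(A)}$ and observes that a lift $\varphi'$ is the same as a sheaf-of-$\kk$-algebras map $\OO_X\to\cA[\varepsilon]/\varepsilon^2$ restricting to $\varphi^{\#}$, hence a $\kk$-linear derivation $\delta\colon\OO_X\to\cA$, hence (universal property of $\Omega_{X/\kk}$ applied once, globally) an element of $\Hom_{\OO_X}(\Omega_{X/\kk},\cA)$, hence (the $\varphi^*\dashv\varphi_*$ adjunction) an element of $\Hom_{\OO_{\Spec(A)}}(\varphi^*\Omega_{X/\kk},\OO_{\Spec(A)})$. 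The correspondence ``lifts $\leftrightarrow$ derivations $\leftrightarrow$ $\Omega$-maps'' is common to both arguments; what the paper's reorganization buys is that there is never an overlap compatibility to check. Your version, on the other hand, carries a small amount of extra bookkeeping that you wave at rather than carry out: the preimages $V_i=\varphi^{-1}(U_i)$ need not be affine, so the affine case does not apply to $\varphi|_{V_i}$ directly — you must refine to an affine cover of $\Spec(A)$, verify agreement on (possibly non-affine) double overlaps, and confirm that gluing derivations on a refinement recovers a global derivation. None of this would fail, but it is exactly what makes the gluing route ``messy,'' and it is worth noting that replacing the target $A$ by the sheaf $\varphi_*\OO_{\Spec(A)}$ eliminates the issue in one stroke.
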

The proof is standard, yet we could not find a ready reference. Likely the
most natural way of arguing is gluing on affine parts, but this seems messy to
write down. As a result, the proof is quite hermetic, the reader might like to
follow it for affine $X$ first.
\begin{proof}
    \def\cA{\mathcal{A}}
    Let $\cA := \varphi_*\OO_{\Spec(A)}$. To give a morphism $\varphi'$ as in the statement is
    to give a sheaf-of-$\kk$-algebras homomorphism $(\varphi')^{\#}\colon \OO_X\to
    \cA[\varepsilon]/\varepsilon^2$ that restricts to $\varphi^{\#}$ when
    composed with $\cA[\varepsilon]/\varepsilon^2\onto \cA$. Write $(\varphi')^{\#}
    = \varphi^{\#} + \varepsilon \delta$ for $\delta\colon \OO_X\to \cA$.
    The fact that $(\varphi')^{\#}$ is a homomorphism is equivalent to the
    fact that $\delta$ is a $\kk$-linear derivation of the $\OO_X$-module
    $\cA$. By the universal property of $\Omega_{X/\kk}$, such derivations are
    in bijection with $\Hom_{\OO_X}(\Omega_{X/\kk}, \cA)$, which by adjunction
    is isomorphic $\Hom_{\OO_{\Spec(A)}}(\varphi^* \Omega_{X/\kk},
    \OO_A)$, which is in bijection with the module of vector fields on
    $\Spec(A)$.
\end{proof}

        \section{Main results}

        We work in the toric setup as in
        Subsection~\ref{ref:toric_setup:subsec}. In particular, $X$ is a smooth
        projective toric variety over a Noetherian ring $\Bbbk$.

        We start by giving cohomological criteria for the existence of sticky
        ideals (see introduction). We then study the locus of saturated ideals
        with a given Hilbert function inside the usual Hilbert scheme of points of $X$

        After that we restrict the setup to the projective space
    where we are able to do two more things: first, express the obstruction groups from Subsection~\ref{ssec:smoothnessOfProjections}
    in a more explicit way. Second, in Subsection~\ref{ref:smoothness_of_saturated:sec} we provide classes of
    saturated ideals that give smooth points of the corresponding multigraded
    Hilbert scheme.

    \subsection{Smoothness of projection maps}\label{ssec:smoothnessOfProjections}

    Recall that $S$ is the Cox ring of a smooth projective toric variety $X$ over a Noetherian ring $\Bbbk$
    and $I,J,K$ are $\Pic(X)$-homogeneous ideals of $\Sk$. If $K\subseteq J$ then recall that
    $\ObFib(K,J) = \Ext^1_{\Sk}(J/K, \Sk/J)_\zerodeg$.

        \begin{theorem}\label{ref:deformsWhenObFibVanishes:thm}
            Suppose that we have $\ObFib(K, J) = 0$. Then
            $\pr_K\colon \Hilb_{K\subseteq J}\to \Hilb_K$ is smooth at $[K\subseteq J]$.
            In particular, if $K\subsetneq J\subseteq K^{\sat}$ then
            $K$ is nonsaturable.
        \end{theorem}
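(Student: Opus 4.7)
My plan is to invoke the standard smoothness criterion: a morphism of pointed $\kk$-schemes is smooth at the base point whenever the tangent map is surjective and the obstruction map is injective. The required obstruction theory for $\Hilb_{K\subseteq J}$ at $[K\subseteq J]$, with obstruction space $\ObFlag$, is exactly the one supplied by Theorem~\ref{ref:obstructionAtFlag:thm}, whose hypothesis is the surjectivity of $\psi_{\zerodeg}$ from~\S\ref{ssec:maindiagram}. The hypothesis $\ObFib(K,J) = \Ext^1_S(J/K, S/J)_{\zerodeg} = 0$ is designed to simultaneously deliver that surjectivity and both inputs to the smoothness criterion, and the entire argument is a diagram chase in the main diagram of~\S\ref{ssec:maindiagram}.

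Concretely, the horizontal long exact sequence of the main diagram gives that the vanishing of $\ObFib(K,J)$ makes $\Hom_S(J, S/J)_{\zerodeg} \to \Hom_S(K, S/J)_{\zerodeg}$ surjective, so in particular $\psi_{\zerodeg}$ is surjective and Theorem~\ref{ref:obstructionAtFlag:thm} applies. Since $\TgFlag$ is by definition the pullback of $\Hom_S(J, S/J)_{\zerodeg}$ and $\Hom_S(K, S/K)_{\zerodeg}$ over $\Hom_S(K, S/J)_{\zerodeg}$, surjectivity of one of the two arrows into the pullback forces surjectivity of the other projection $\TgFlag \to \Hom_S(K, S/K)_{\zerodeg}$, which is the tangent map of $\pr_K$. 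For the obstruction side, the same vanishing also gives injectivity of $\Ext^1_S(J, S/J)_{\zerodeg} \to \Ext^1_S(K, S/J)_{\zerodeg}$; so any element of $\ObFlag$ projecting to zero in $\Ext^1_S(K, S/K)_{\zerodeg}$ is a pair $(\alpha, 0)$ whose common image in $\Ext^1_S(K, S/J)_{\zerodeg}$ vanishes, forcing $\alpha = 0$. Hence $\ObFlag \to \Ext^1_S(K, S/K)_{\zerodeg}$ is injective, and smoothness of $\pr_K$ at $[K\subseteq J]$ follows.

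For the ``in particular'' clause, suppose $K \subsetneq J \subseteq K^{\sat}$ and that $K$ is saturable: pick a one-parameter family $\{K_t\}$ in $\Hilb_K$ with $K_t|_{t=0} = K$ and $K_t|_{t=\lambda}$ saturated for general $\lambda$. Smoothness of $\pr_K$ at $[K\subseteq J]$ lets me lift this family, étale-locally near $[K]$, to a flag family $\{K_t \subseteq J_t\}$ in $\Hilb_{K\subseteq J}$. Because $K_{\ee} = J_{\ee}$ for $\ee \gg 0$, the same equality holds for $K_t|_{t=\lambda}$ and $J_t|_{t=\lambda}$, so they share a saturation; since $K_t|_{t=\lambda}$ is already saturated, this forces $J_t|_{t=\lambda} \subseteq K_t|_{t=\lambda}$ and hence equality, contradicting $H_{S/J} \neq H_{S/K}$. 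The main obstacle in executing this plan is the lifting step: one needs an actual family rather than a merely infinitesimal lift, but this is a standard consequence of smoothness upon passing to a suitable formal or étale neighbourhood of $[K]$.
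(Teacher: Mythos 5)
Your proposal is correct and follows essentially the same route as the paper's proof: use $\ObFib(K,J)=0$ together with the long exact sequence in the main diagram to get surjectivity of $\Hom_S(J,S/J)_{\zerodeg}\to\Hom_S(K,S/J)_{\zerodeg}$ (hence of $\psi_{\zerodeg}$ and of the tangent map via the pullback square) and injectivity of $\Ext^1_S(J,S/J)_{\zerodeg}\to\Ext^1_S(K,S/J)_{\zerodeg}$ (hence of the obstruction map via the other pullback square), invoke Theorem~\ref{ref:obstructionAtFlag:thm} and the Main Theorem of obstruction calculus to get smoothness of $\pr_K$, and then derive a contradiction for the ``in particular'' clause by lifting a saturating family and comparing high-degree pieces. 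The only cosmetic difference is that you spell out the diagram chase that the paper leaves implicit.
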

        \begin{proof}
            Since $\ObFib = 0$, the assumptions of
            Theorem~\ref{ref:obstructionAtFlag:thm} are satisfied.
            By the main diagram~\S\ref{ssec:maindiagram} the obstruction map
            is injective and the tangent map to $\pr_K|_{\kappa}\colon \Hilb_{K\subseteq
                J}|_\kappa\to \Hilb_K|_\kappa$ is surjective. By Lemma~\ref{ref:sernesi:lem} and the five lemma this implies that 
                the tangent map to $\pr_K\colon \Hilb_{K\subseteq
                J}\to \Hilb_K$ is also surjective. This proves that $\pr_K$
                is smooth at $[K\subseteq J]$. In
                particular its image contains an open neighborhood of $[K]$.
                Suppose that $K$ is saturable and let $K_t$ be a family with
                special fiber $K$ and general fiber $K_{\eta}$ such that
                $K_{\eta}$ is saturated. Then, after perhaps shrinking the
                base of the family, we would also get a family $K_t\subseteq
                J_t$ with special fiber $J$.
                By Lemma~\ref{lem:ideal_agrees_with_sat} there exists $\nn\in \Nef(X)$ such that
                $K_\aa = K^{\sat}_\aa$ for every $\aa\in \nn+\Nef(X)$. As a result, $(J/K)_\aa = 0$
                for such $\aa$ and the same holds for $(J_{\eta}/K_{\eta})_\aa$. It follows that 
                $J_{\eta}^{\sat} = K_\eta^{\sat}$.
				From this we conclude that $K_\eta \subsetneq J_\eta \subseteq
				J_\eta^{\sat} = K_{\eta}^{\sat}$
				which contradicts the assumption that $K_\eta$ is saturated.				
%
        \end{proof}

        It may happen that the obstruction group is nonzero, yet the map
        $\pr_K$ is smooth, so we provide the following refined criterion.
        \begin{theorem}\label{ref:deformWhenTangentSurjective:thm}
            Suppose that $(K,J)$ are such that the map $\Hom_{\Sk}(J,
            \Sk/J)_{\zerodeg}\to \Hom_{\Sk}(K, \Sk/J)_{\zerodeg}$ is surjective
            and that $[J]\in \Hilb_J$ is a smooth point. Then $\pr_K$ is
            smooth at $[K\subseteq J]$. 
            In particular, if $K\subsetneq J\subseteq K^{\sat}$ then
        $K$ is nonsaturable.
        \end{theorem}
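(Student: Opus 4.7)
The plan is to verify smoothness of $\pr_K$ at $[K\subseteq J]$ directly via the infinitesimal lifting criterion; since $\ObFib(K,J)$ is allowed to be nonzero here, Theorem~\ref{ref:deformsWhenObFibVanishes:thm} does not apply, and the role of smoothness of $[J]\in\Hilb_J$ will be to absorb the potential excess obstructions. Once smoothness of $\pr_K$ is established, the nonsaturability conclusion will follow by the same family argument used at the end of the proof of Theorem~\ref{ref:deformsWhenObFibVanishes:thm}.

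Concretely, given a small extension $A'\twoheadrightarrow A$ of local Artin $\kk$-algebras with square-zero kernel $I$, a deformation $(K_A\subseteq J_A)$ of $(K\subseteq J)$ over $A$, and a lift $K_{A'}\subseteq S\otimes A'$ of $K_A$, I need to produce a lift $J_{A'}\supseteq K_{A'}$ of $J_A$. Smoothness of $[J]\in\Hilb_J$ supplies \emph{some} lift $\tilde J_{A'}$ of $J_A$; the failure of $K_{A'}\subseteq \tilde J_{A'}$ is measured by a class $\phi\in \Hom_S(K,S/J)_{\zerodeg}\otimes_\kk I$, arising because the composition $K_{A'}\to (S\otimes A')/\tilde J_{A'}$ vanishes modulo $I$ and hence factors through $K\otimes_\kk I\to (S/J)\otimes_\kk I$. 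The torsor of lifts of $J_A$ is modelled on $\Hom_S(J,S/J)_{\zerodeg}\otimes_\kk I$, and varying $\tilde J_{A'}$ shifts $\phi$ by the image of the chosen element under the restriction map $\Hom_S(J,S/J)_{\zerodeg}\to \Hom_S(K,S/J)_{\zerodeg}$. The hypothesized surjectivity of this map lets us kill $\phi$ and obtain the required $J_{A'}$.

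For the final clause, if $K$ were saturable, we could take a one-parameter family $(K_t)$ with $K_0=K$ and $K_\eta$ saturated; smoothness of $\pr_K$ at $[K\subseteq J]$ permits, possibly after \'etale base change, a lift to a family $(K_t\subseteq J_t)$ specializing to $(K\subseteq J)$. Constancy of Hilbert functions on $\Hilb_{K\subseteq J}$ together with $J\subseteq K^{\sat}$ force $J_\eta/K_\eta$ to be a nonzero submodule of $S/K_\eta$ annihilated by a power of $S_+$, contradicting saturation of $K_\eta$. The main technical hurdle is the torsor identification in the middle paragraph: one must verify that the standard $\Hom_S(J,S/J)_{\zerodeg}\otimes_\kk I$-action on the set of lifts of $J_A$ translates, under restriction to $K$, into precisely the expected shift of $\phi$. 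This is the engine that converts the surjectivity hypothesis into cancellation of the obstruction; everything else is formal.
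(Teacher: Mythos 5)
Your proof is correct, and it takes a genuinely more hands-on route than the paper's. The paper first applies Theorem~\ref{ref:obstructionAtFlag:thm} (whose hypothesis $\psi_{\zerodeg}$ surjective is implied by yours, since $\Hom_S(J,S/J)_{\zerodeg}$ is one of the summands) to produce the abstract obstruction theory $\ObFlag$ for $\Hilb_{K\subseteq J}$, then observes that since $[J]$ is unobstructed, every obstruction actually lies in $O:=\ker\bigl(\ObFlag\to\Ext^1_S(J,S/J)_{\zerodeg}\bigr)$, which by the pullback description of $\ObFlag$ injects into $\Ext^1_S(K,S/K)_{\zerodeg}$; combined with surjectivity of $d\pr_K$ (from the main diagram) the Main Theorem of obstruction calculus gives smoothness. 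You instead verify formal smoothness of $\pr_K$ at $[K\subseteq J]$ directly: given a small extension $A'\onto A$ with kernel $I$, a pair $(K_A\subseteq J_A)$, and a lift $K_{A'}$ of $K_A$, smoothness of $[J]$ supplies some lift $\tilde J_{A'}$, the failure of $K_{A'}\subseteq\tilde J_{A'}$ is a class $\phi\in\Hom_S(K,S/J)_{\zerodeg}\tensor_{\kk}I$, and replacing $\tilde J_{A'}$ by the lift twisted by $\psi\in\Hom_S(J,S/J)_{\zerodeg}\tensor_\kk I$ changes $\phi$ to $\phi-\psi|_K$; surjectivity kills $\phi$. The torsor computation you flag as the technical hurdle is precisely (a special case of) the $\varphi_J$-part of the $\delta_s$, $\varphi_K$, $\varphi_J$ manipulation carried out in the proof of Theorem~\ref{ref:obstructionAtFlag:thm} in the appendix, so it is genuinely correct and well within reach. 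What each approach buys: the paper's route reuses the same abstract apparatus for Theorem~\ref{ref:deformsWhenObFibVanishes:thm} and the appendix corollary, while yours is self-contained and makes transparent exactly how the two hypotheses interact; the final nonsaturability clause is handled identically in both (lift a one-parameter saturable family through $\pr_K$ after shrinking and compare Hilbert functions of $J_\eta/K_\eta$).
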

        \begin{proof}
            By assumption, the map $\psi_{\zerodeg}$ is surjective, so we can apply
            Theorem~\ref{ref:obstructionAtFlag:thm}.
            By pullback, the map $d\pr_K|_{\kappa}\colon \TgFlag\to
            \Hom_{\Sk}(K,\Sk/K)_{\zerodeg}$ is surjective.
            The map $\ObFlag\to
            \Ext^1_{\Sk}(J, \Sk/J)_{\zerodeg}$ is a map of obstruction theories,
            so the image of any obstruction is the obstruction to deforming
            $[J]$, which is zero by assumption. Thus the obstructions actually
            live in a smaller space $O := \ker(\ObFlag\to \Ext^1_{\Sk}(J,
            \Sk/J)_{\zerodeg})$ which is thus by definition another obstruction space at
            $[K\subseteq J]\in \Hilb_{K\subseteq J}$. Since $O\to \Ext^1_{\Sk}(K,
            \Sk/K)_{\zerodeg}$ is injective, by Lemma~\ref{ref:sernesi:lem} we
            obtain surjectivity on tangent spaces and then it follows from the Main Theorem of
            obstruction calculus that $\pr_K$ is smooth at $[K\subseteq J]$.
        \end{proof}


        In the special case $(K, J) = (I, I^{\sat})$ we can be a little more
        precise.
        \begin{theorem}\label{ref:saturationDeformsWhenObFibVanishes:thm}
            Let $I$ be a homogeneous ideal and
            suppose that $\ObFib(I, I^{\sat}) = 0$. Then $\pr_{I}$ is an open immersion
            near $[I \subseteq I^{\sat}]$. In particular, if $I$ is non
            saturated, it is entirely nonsaturable.
        \end{theorem}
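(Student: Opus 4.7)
The plan is to combine the two results already established in the paper. Applying Theorem~\ref{ref:deformsWhenObFibVanishes:thm} to the pair $(K,J)=(I,I^{\sat})$ gives that $\pr_I\colon \Hilb_{I\subseteq I^{\sat}}\to \Hilb_I$ is smooth at $[I\subseteq I^{\sat}]$, while Proposition~\ref{ref:projectionFromSatLocallyClosed:prop} gives that the same $\pr_I$ is a locally closed immersion near the same point (concretely, on the open subset $U\subseteq \Hilb_{I\subseteq I^{\sat}}$ of pairs whose larger ideal is saturated, it factors as a closed immersion into the open subset $\Hilb_I\setminus\pr_I(Z)$). Intersecting the two open neighborhoods on which these properties hold yields an open neighborhood $W\subseteq U$ of $[I\subseteq I^{\sat}]$ such that $\pr_I|_W$ is simultaneously smooth and a monomorphism. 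Since a smooth monomorphism of finite type is unramified, hence étale, and an étale monomorphism is an open immersion, $\pr_I|_W$ is an open immersion. This is the first assertion.

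For the "in particular" clause, assume $I$ is nonsaturated. Set $V := \pr_I(W)$; this is an open neighborhood of $[I]$ in $\Hilb_I$ since $\pr_I|_W$ is an open immersion. For any $[I']\in V$ there is a unique preimage $[I'\subseteq J']\in W\subseteq U$, so $J'$ is saturated, which forces $J' = (I')^{\sat}$. Because points of $\Hilb_{I\subseteq I^{\sat}}$ parametrize pairs in which the larger ideal has Hilbert function $H_{S/I^{\sat}}$, we obtain
\[
    H_{S/(I')^{\sat}} \;=\; H_{S/J'} \;=\; H_{S/I^{\sat}} \qquad \text{for every } [I']\in V,
\]
which is precisely the definition of $I$ being entirely nonsaturable.

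The only nonroutine step is the implication smooth $+$ locally closed immersion $\Rightarrow$ open immersion; if one prefers to avoid invoking étaleness of smooth monomorphisms, an equivalent argument is that smoothness makes $\pr_I|_W$ open and a locally closed immersion makes its tangent map injective, so the surjectivity of the tangent map coming from smoothness upgrades the locally closed immersion to an open one. Everything else is a bookkeeping application of the cited Theorem~\ref{ref:deformsWhenObFibVanishes:thm} and Proposition~\ref{ref:projectionFromSatLocallyClosed:prop}.
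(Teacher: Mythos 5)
Your proof is correct, and it reaches the conclusion by a slightly more modular route than the paper. You treat Theorem~\ref{ref:deformsWhenObFibVanishes:thm} (smoothness of $\pr_I$ at the point) and Proposition~\ref{ref:projectionFromSatLocallyClosed:prop} (locally closed immersion near the point) as black boxes, intersect their loci, and invoke ``smooth $+$ monomorphism $\Rightarrow$ open immersion.'' The paper's own argument instead goes back to the main diagram directly: it first observes $\Hom_S(I^{\sat}/I, S/I^{\sat})_{\zerodeg}=0$ (via a transverse element, since multiplication by $\ell$ is injective on $S/I^{\sat}$ and nilpotent on $I^{\sat}/I$), concludes the tangent map of $\pr_I$ is \emph{bijective} (not merely surjective) and the obstruction map is injective, hence $\pr_I$ is \'etale at the point; it then re-proves universal injectivity on the open set $U=\pr_{I^{\sat}}^{-1}(\Hilb_{I^{\sat}}^{\sat})$ (this is essentially the content you are importing wholesale from Proposition~\ref{ref:projectionFromSatLocallyClosed:prop}) and finishes with the Stacks Project criterion that a universally injective \'etale morphism is an open immersion. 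The underlying ingredients are therefore the same, but your assembly avoids duplicating the injectivity/universal-injectivity argument by leaning on the already-proved proposition, which is a small but genuine simplification. One minor stylistic point: your alternative phrasing (``smoothness makes $\pr_I|_W$ open and a locally closed immersion makes the tangent map injective, so surjectivity of the tangent map \dots upgrades'') conflates pointwise tangent-map statements with global ones; it is cleaner simply to say the smooth morphism is open, hence the (locally closed) image is open, hence the closed immersion onto that image is an open immersion.
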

        \begin{proof}
            Since $\ObFib = 0$, the assumptions of
            Theorem~\ref{ref:obstructionAtFlag:thm} are satisfied. 
            Let $\ell$ be a transverse element as
            in~Section~\ref{ssec:saturations}.
 			 The multiplication by $\ell$ is an
            injective map on $\Sk/I^{\sat}$ and a nilpotent one on
            $I^{\sat}/I$, hence $\Hom_{\Sk}(I^{\sat}/I, \Sk/I^{\sat}) = 0$.
            By Lemma~\ref{ref:sernesi:lem} and the main diagram, 
            the tangent map to $\pr_I\colon \Hilb_{I\subseteq
                I^{\sat}}\to \Hilb_I$ is bijective, while the obstruction map
                is injective. This proves that $\pr_I$ is \'etale at $[I\subseteq I^{\sat}]$. In
                particular its image contains an open neighborhood of $[I]$,
                whence if $I^{\sat}\neq I$, then $I$ is entirely nonsaturable.
                Consider  $U := \pr^{-1}_{I^{\sat}}(\Hilb_{I^{\sat}}^{\sat})$
                which is an open
                (Theorem~\ref{ref:opennessOfSaturation:thm}) neighbourhood of
                $[I\subseteq I^{\sat}]$. For every point $[I'
                \subseteq I'']\in U$ we have $I'' = (I')^{\sat}$. Thus the map
                $(\pr_I)|_U\colon U\to \Hilb_I$ is universally
                injective~\cite[\href{https://stacks.math.columbia.edu/tag/01S4}{Tag
                01S4}]{stacks-project}. By shrinking $U$ to a neighborhood of
                $[I\subseteq I^{\sat}]$, we assume that it is \'etale on $U$
                as well. By~\cite[\href{https://stacks.math.columbia.edu/tag/025G}{Tag
                025G}]{stacks-project} the morphism $(\pr_I)|_U$ is an open
                immersion.
        \end{proof}

        Again, the vanishing of $\ObFib(I, I^{\sat})$ is not
        necessary for $I$ to be nonsaturable, see~\S\ref{ssec:nonvanishingGroup}
        for an explicit example. We state a slightly more
        general version of the theorem.
        \begin{theorem}\label{ref:saturationDeformWhenTangentSurjective:thm}
            Suppose that the map $\Hom_{\Sk}(I^{\sat},
            \Sk/I^{\sat})_{\zerodeg}\to\Hom_{\Sk}(I, \Sk/I^{\sat})_{\zerodeg}$ is
            surjective and that $[I^{\sat}]\in \Hilb_{I^{\sat}}$
            is a smooth point. Then $\pr_I$ is an open immersion near $[I\subseteq
            I^{\sat}]$. In particular, if $I$ is not saturated, it is entirely
            nonsaturable.
        \end{theorem}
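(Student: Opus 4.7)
The plan is to mirror the proof of Theorem~\ref{ref:saturationDeformsWhenObFibVanishes:thm}, substituting Theorem~\ref{ref:deformWhenTangentSurjective:thm} for the vanishing of $\ObFib$ wherever that was used, and adding one small extra step to promote smoothness to étaleness.

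First, I would apply Theorem~\ref{ref:deformWhenTangentSurjective:thm} to the pair $(K,J) = (I, I^{\sat})$. The two hypotheses (surjectivity of $\Hom_S(I^{\sat}, S/I^{\sat})_{\zerodeg}\to \Hom_S(I, S/I^{\sat})_{\zerodeg}$ and smoothness of $[I^{\sat}]$) are exactly what we assume, so we obtain that $\pr_I\colon \Hilb_{I\subseteq I^{\sat}}\to \Hilb_I$ is smooth at $[I\subseteq I^{\sat}]$.

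Next, I would upgrade smoothness to étaleness by checking that the tangent map $d\pr_I$ is bijective, not merely surjective. From the main diagram in~\S\ref{ssec:maindiagram}, the kernel of the surjection $\TgFlag\to \Hom_S(I, S/I)_{\zerodeg}$ is identified with $\Hom_S(I^{\sat}/I, S/I^{\sat})_{\zerodeg}$. This group vanishes because a transverse element annihilates some power of $I^{\sat}/I$ while acting injectively on $S/I^{\sat}$. Hence $d\pr_I$ is bijective and $\pr_I$ is étale at $[I\subseteq I^{\sat}]$.

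It remains to promote étaleness to an open immersion and derive entire nonsaturability. I would repeat verbatim the last part of the proof of Theorem~\ref{ref:saturationDeformsWhenObFibVanishes:thm}: restrict to the open neighbourhood $U := \pr_{I^{\sat}}^{-1}(\Hilb_{I^{\sat}}^{\sat})$ afforded by Theorem~\ref{ref:opennessOfSaturation:thm}; on $U$ any pair $[I'\subseteq I'']$ automatically satisfies $I'' = (I')^{\sat}$, so $\pr_I|_U$ is universally injective; combining this with étaleness and the Stacks tags already cited yields an open immersion. If $I$ is not saturated, then the open image identifies a neighbourhood of $[I]$ with ideals $I'$ whose saturation has Hilbert function $H_{S/I^{\sat}}$, which is the definition of entire nonsaturability.

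The only substantive new ingredient beyond Theorem~\ref{ref:saturationDeformsWhenObFibVanishes:thm} is the injectivity step for $d\pr_I$; this is the natural place I expect the argument to turn, but it collapses immediately to the well-known vanishing $\Hom_S(I^{\sat}/I, S/I^{\sat}) = 0$, so no serious obstacle arises.
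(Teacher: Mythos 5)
Your proposal is correct and follows the same route as the paper: the paper's proof is simply ``By Theorem~\ref{ref:deformWhenTangentSurjective:thm} the map is smooth and arguing as in the proof of Theorem~\ref{ref:saturationDeformsWhenObFibVanishes:thm} we see that $\pr_I$ is an open immersion,'' and your expansion of ``arguing as in'' --- in particular, making explicit that the kernel $\Hom_S(I^{\sat}/I, S/I^{\sat})_{\zerodeg}$ of $d\pr_I$ vanishes by the transverse-element argument, promoting smoothness to \'etaleness, and then running the universal-injectivity argument with Theorem~\ref{ref:opennessOfSaturation:thm} and the cited Stacks tags --- is exactly the intended content of that phrase.
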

        \begin{proof}
            By Theorem~\ref{ref:deformWhenTangentSurjective:thm} the map is
            smooth and arguing as in the proof of
            Theorem~\ref{ref:saturationDeformsWhenObFibVanishes:thm} we see
            that $\pr_I$ is an open immersion near $[I\subseteq I^{\sat}]$.
        \end{proof}
        
        \begin{example}\label{exa:Z2_grading}
		    Let $\kappa$ be a field of characteristic zero and $X=\mathbb{P}^1\times \mathbb{P}^1$
		    with Cox ring $\Sk=\kappa[\alpha_0,\alpha_1,\beta_0,\beta_1]$ 
		    where $\deg(\alpha_i) = (1,0)$ and $\deg(\beta_i) = (0,1)$. The irrelevant ideal
		    is $\Irr = (\alpha_0,\alpha_1)\cdot (\beta_0,\beta_1)$. If 
		    $K=(\alpha_0\alpha_1,\alpha_0\beta_0,\alpha_0\beta_1,\beta_0\beta_1)$
		    and $J = K^{\sat} = (\alpha_0, \beta_0\beta_1)$, then
            $\Ext^1_{\Sk}(J/K, \Sk/J)_\zerodeg = 0$.
		    Therefore, $K$ is entirely nonsaturable by 
		    Theorem~\ref{ref:saturationDeformsWhenObFibVanishes:thm}.
        \end{example}
        
		The second example illustrates that choosing $J$ different than $K^{\sat}$ may be useful.        
        
        \begin{example}\label{exa:intermediate_ideal}
		Let $\kappa$ be a field of characteristic zero, $X=\mathbb{P}^2$ with Cox ring $\Sk=\kappa[\alpha_0,\alpha_1,\alpha_2]$
		and consider the ideal $K = (\alpha_0^2\alpha_2,\alpha_0\alpha_1^3,\alpha_0^2\alpha_1^2,\alpha_0^3\alpha_1,
		\alpha_0^5,\alpha_1^6)$. We show that $K$ is nonsaturable. We have
        $\Ext^1_{\Sk}(K^{\sat}/K, \Sk/K^{\sat})_\zerodeg \neq 0$
		so Theorem~\ref{ref:saturationDeformsWhenObFibVanishes:thm} cannot be
        used. Yet,
        for $J = K^{\sat}_{\geq 3}$ we get $\Ext^1_{\Sk}(J/K, \Sk/J)_\zerodeg
        = 0$. Therefore, $K$ is nonsaturable by
        Theorem~\ref{ref:deformsWhenObFibVanishes:thm}.
        \end{example}

		For applications for border rank lower bounds the following variation of the above ideas
		may be useful. We present two versions: first for an arbitrary smooth projective toric variety
		and then for the product of projective spaces, since the latter is a main case of interest for applications.
        The general statement is quite heavy. The informal idea is that $K$ is
        a truncation of $I$, so by replacing $I$ with $K$ we reduce the amount
        of data.

		\begin{proposition}\label{ref:def_criterion_general_toric}
            Assume that the ideal
			$\Irr$ is minimally generated in degrees $\vv_1, \ldots, \vv_l$.
			Let $I\subseteq \Sk$ be a $\Pic(X)$-homogeneous ideal and $A\subseteq \Pic(X)$ be a subset
			such that $A+\Eff(X) \subseteq A$. Let $L$ denote the ideal 
			$\bigoplus_{\aa\in A}(\Sk)_\aa$ and $K=I+L$. If there exist a $\Pic(X)$-graded
			ideal $J\subseteq \Sk$, a degree $\uu\in \Eff(X)$, and positive integers $k_1,\ldots, k_l$ such that
			\begin{enumerate}
				\item \label{it:1} $K \subseteq J$,		
				\item \label{it:4} $\ObFib(K, J) = 0$,
				\item\label{it:2} $K_\uu \subsetneq J_\uu$,
				\item\label{it:3} $K_{\uu+k_i\cdot \vv_i} = J_{\uu + k_i \cdot
                    \vv_i} \subsetneq (\Sk)_{\uu+k_i\cdot \vv_i}$
				 for all $1\leq i \leq l$,
			\end{enumerate}			 
			then $I$ is not saturable.
		\end{proposition}
		\begin{proof}
		By part~\ref{it:1} we may consider the flag multigraded Hilbert scheme 
		$\Hilb_{K\subseteq J}$. As in the proof of 
		Theorem~\ref{ref:deformsWhenObFibVanishes:thm},
		it follows from part~\ref{it:4} that there there is an open neighborhood
		$U$ of $K$ in $\Hilb_K$ such that for every $K' \in U$ there is $J'$ such that
		$K'\subset J'$ is a point of $\Hilb_{K\subseteq J}$. 
		Let $\pi\colon \Hilb_I \to \Hilb_K$ be the morphism given on closed points 
		by $I'\mapsto I'+L$. Let $V= \pi^{-1}(U)$ and pick $I'\in V$. 
		Let $K' = I' + L$ and $J'$ be such that
		$K'\subseteq J'$ is a point in $\Hilb_{K\subseteq J}$. 
		Let $\kappa'$ be the residue field at $[K']$. Since 
		$K'_{\uu+k_i\cdot \vv_i} \neq (S_{\kappa'})_{\uu+k_i\cdot \vv_i}$
		for all $i$ we conclude that $I'_\uu = K'_\uu \subsetneq J'_\uu$ and 
		$I'_{\uu+k_i\cdot \vv_i} = K'_{\uu+k_i\cdot \vv_i} = J'_{\uu + k_i \cdot \vv_i}$
		for all $i$. By part~\ref{it:2} there exists $f\in J'_\uu \setminus I'_{\uu}$
		and by part~\ref{it:3} it belongs to $(I')^{\sat}$.
		Therefore, $V$ consists of nonsaturated ideals. Thus, $I$ is nonsaturable.
		\end{proof}

		When $X$ is the product of projective spaces we obtain the following simplified statement.		

		\begin{corollary}\label{ref:def_criterion_product:cor}
		Let $X = \mathbb{P}^{n_1}\times \cdots \times \mathbb{P}^{n_s}$ and
        $S$ be its Cox ring (see Example~\ref{ex:productOfProjectives}).
		Let $I\subseteq \Sk$ be a $\Pic(X)$-homogeneous ideal and $A\subseteq \Pic(X)$ be a subset
		such that $A+\Eff(X) \subseteq A$. Let $L$ denote the ideal
        $\bigoplus_{\aa\in A} (\Sk)_\aa$
		and $K=I+L$. If there exist a $\Pic(X)$-graded
			ideal $J\subseteq \Sk$ a degree $\uu\in \mathbb{Z}^{s}_{\geq 0}$ and a positive integer $k$ such that
			\begin{enumerate}
				\item $K\subseteq J$,
				\item $\ObFib(K, J) = 0$,
				\item $K_\uu \subsetneq J_\uu$,
				\item $K_{\uu+k\cdot (1,1,\ldots, 1)} = J_{\uu + k \cdot (1,1,\ldots, 1)}
				 \subsetneq (\Sk)_{\uu + k\cdot (1,1,\ldots, 1)}$,
			\end{enumerate} 
			then $I$ is not saturable.
		\end{corollary}

    \subsection{Saturated locus in the multigraded Hilbert schemes}\label{ssec:smoothnessOfSaturated}

    In this subsection we study the saturated locus of the multigraded Hilbert scheme.

    As above, throughout this subsection, $X$ is a smooth projective toric variety over a Noetherian ring $\kk$
    with Cox ring $S$. The symbol $H$ denotes a Hilbert function admissible for the saturated ideal of a length $d$
    zero-dimensional subscheme of $X$. We use notation introduced
    in Subsection~\ref{ssec:MaclaganSmithHilbertScheme}.

    Let $P$ be the multigraded Hilbert polynomial of $H$. There is a natural map
        \[
            \Hilb^P(X)\to \Hilb^d(X),
        \]
        that sends an ideal $I\subseteq \Sk$ to the subscheme $V(I)\subseteq
        X_{\kappa}$, see~\cite[\S5.2]{cox_book} for the construction of $V(I)$.
		Furthermore, as discussed in Subsection~\ref{ref:toric_setup:subsec} there is a natural restriction morphism $\Hilb^H \to \Hilb^P(X)$ and therefore, we obtain a morphism
		\[
		\Hilb^H \to \Hilb^d(X).
		\]
		This allows us to identify the saturated locus in $\Hilb^H$ with a
        locally closed subscheme of the usual Hilbert scheme as follows.

        \begin{proposition}\label{ref:partialToGrothendieck:prop}
            Let $P$ be a multigraded polynomial equal to a constant $d$. Then,
            the map $\Hilb^P(X)\to \Hilb^d(X)$ is an isomorphism. In particular,
            for every Hilbert function $H$ with multigraded Hilbert polynomial $P$,
            the natural map $\Hilb^{H, \sat} \to \Hilb^d(X)$ is a locally closed
            immersion with image consisting of subschemes that
            have Hilbert function $H$.
        \end{proposition}
        \begin{proof}
            \def\II{\mathcal{I}}%
            It is enough to construct an inverse. Take a $\kk$-algebra $R$, an $R$-point of
            $\Hilb^d(X)$, the associated flat family $Z\subseteq
            X\times_{\kk} \Spec(R)$, and its ideal sheaf $\II_Z$. Let $I_Z\subseteq
            S_R$ be the homogeneous ideal of $Z$; it is obtained as $I_Z =
            \bigoplus_{\aa\in \Pic(X)} H^0(X_R, \II_Z(\aa))$.
            We 
            have an exact sequence
            \begin{align}\label{eq:H1}
                0&\to (I_Z)|_{\Nef(X)} \to S_R|_{\Nef(X)} \to \bigoplus_{\aa\in
                \Nef(X)} H^0(X_R,\OO_Z(\aa)) \to\\
                &\to\bigoplus_{\aa\in \Nef(X)} H^1(X_R,\II_Z(\aa))\to
                \bigoplus_{\aa\in \Nef(X)} H^1(X_R,\OO_Z(\aa))\nonumber.
            \end{align}
            Fix any $\aa\in \nn+\Nef(X)$, where $\nn$ was defined
            above~\eqref{eq:constantRank}.
            If $R$ is a field, then~\eqref{eq:constantRank} implies that
            $(S_R/I_Z)_{\aa}$ has rank $d$, the same as $H^0(X_R,\OO_Z(\aa))$.
            Moreover, in this case $H^1(X_R, \OO_Z(\aa))$ is zero,
            see~\cite[Theorem~9.2.3]{cox_book} for $R = \mathbb{C}$ and
            \cite[Theorem~3.6]{Altmann_Buczynski_Kastner_Winz} in general.
            This implies that $H^1(X_R,\II_Z(\aa))$ is zero in this case.
            For general $R$, by Cohomology and Base
            Change~\cite[Lemma~5.1.1]{Conrad__Grothendieck_duality} applied to
            the sheaf
            $\II_Z(\aa)$, we obtain that $H^j(X_R,\II_Z(\aa)) = 0$ for every $j\geq 1$.
            Given that, the sequence~\eqref{eq:H1} becomes short exact and $R\mapsto
            (I_Z)|_{\nn + \Nef(X)}$ yields the required inverse map $\Hilb^d(X)\to
            \Hilb^P(X)$ and thus the isomorphism. The final claims follow from
            Corollary~\ref{ref:fixedHilbertFunctionLocallyClosed:cor}.
        \end{proof}

    Recall that $\Hilb^d(X)$ has a distinguished component $\Hilb^{d, \sm}(X)$,
    called the \emph{smoothable component}. The general point $[\Gamma]$ of
    this component corresponds to a reduced $\Gamma$, that is, to a tuple of $d$ points of $X$.
    Even more, the set of points $[\Gamma]\in \Hilb^d(X)$ such
    that $\Gamma$ is smooth, is open. For the purposes of this discussion, we
    call it the \emph{locus of smooth subschemes} of $\Hilb^d(X)$.
    \begin{definition}\label{ref:Slip:def}
        The \emph{Scheme of Limits of Ideals of Points} $\Slip^H\subseteq
        \Hilb^H$ is the closure of the open subscheme: the intersection of
        $\Hilb^{H, \sat}$ and the preimage of the locus of smooth subschemes under the natural morphism
        $\Hilb^H \to \Hilb^d(X)$ constructed in Subsection~\ref{ref:toric_setup:subsec}.
        For $[I]\in \Hilb^H$, we also denote $\Slip^H$ by $\Slip_I$.
    \end{definition}
    By definition, a general point of $\Slip^H$ corresponds to a saturated
    ideal $I^{\sat}$ such that $V(I^{\sat})$ is a tuple of points with Hilbert function $H$.
    The map $\Slip^H\to \Hilb^{d, \sm}(X)$ can be described a
    bit more precisely when $H$ is the Hilbert function of a general $d$-tuple
    of points in $X$, that is, $H(\aa) = \min(d, \mathrm{rank}_\kk S_{\aa})$
for every $i$.
    For a zero-dimensional subscheme $\Gamma$ in a smooth quasi-projective
    $\kappa$-scheme
    $Y$, we say that
    $\Gamma$ is \emph{unobstructed} if $[\Gamma]\in
    \Hilb^d(Y)$ is a smooth point, where $d = \dim_{\kappa} H^0(\OO_{\Gamma})$ and
    $\Hilb^d(Y)$ is a Grothendieck Hilbert scheme. Unobstructedness is an
    intrinsic property of $\Gamma$, it does not depend on the embedding.

    \begin{corollary}\label{ref:exceptionalIsDivisor:cor}
        Let $H$ be the Hilbert function of a general $d$-tuple of points in
        $X$. Let $[I]\in \Slip^H$ be such that $V(I^{\sat})\subseteq
        X_{\kappa}$ is unobstructed and the fiber of the
        map $\Slip^H|_{\kappa}\to \Hilb^d(X|_{\kappa})$ over $V(I^{\sat})$ is not a point. Then there is a divisor $E\subseteq
        \Slip^H|_{\kappa}$ with $[I]\in E$ such that the image of $E$ in
        $\Hilb^d(X)|_{\kappa}$ has codimension at least two.
    \end{corollary}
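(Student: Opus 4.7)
The plan is to interpret the natural morphism $\Slip^H \to \Hilb^d(\mathbb{P}^{n-1})$ as a proper birational morphism onto $\Hilb^{d,\sm}(\mathbb{P}^{n-1})$, and then invoke the classical purity theorem for the exceptional locus of such a morphism with locally factorial target.

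First I would check that $\pi\colon \Slip^H \to \Hilb^d(\mathbb{P}^{n-1})$ is proper, factors through $\Hilb^{d,\sm}(\mathbb{P}^{n-1})$, and is birational onto it. Properness is immediate, since $\Slip^H$ is closed in the projective scheme $\Hilb^H$. By Definition~\ref{ref:Slip:def}, a dense open subset of $\Slip^H$ parameterizes saturated ideals $I$ for which $V(I)$ is a smooth reduced $d$-tuple; such $V(I)$ lie in $\Hilb^{d,\sm}(\mathbb{P}^{n-1})$. As $H$ is the Hilbert function of a general $d$-tuple, a general reduced tuple $\Gamma \subseteq \mathbb{P}^{n-1}$ has Hilbert function $H$, and by Proposition~\ref{ref:constantHilbertFunction:prop} the ideal $I(\Gamma)$ is the unique $\pi$-preimage of $[\Gamma]$. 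Hence $\pi$ is an isomorphism over a dense open, so a proper birational morphism of irreducible varieties of common dimension $d(n-1)$.

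Next I would apply the purity of exceptional loci: for a proper birational morphism $g\colon X\to Y$ of integral Noetherian schemes and a point $y\in Y$ at which $Y$ is locally factorial, every irreducible component of the set $\{x\in X : \dim_x g^{-1}(g(x)) > 0\}$ that meets $g^{-1}(y)$ has codimension one in $X$, and its image in $Y$ has codimension at least two. In our setting, the unobstructedness of $V(I^{\sat})$ makes $\Hilb^{d,\sm}(\mathbb{P}^{n-1})$ smooth, hence locally factorial, at $y := V(I^{\sat})$. The hypothesis that $\pi^{-1}(y)$ is not a point forces the fiber at $[I]$ to be positive-dimensional, so $[I]$ lies in the exceptional locus of $\pi$. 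Any irreducible component $E$ of that locus through $[I]$ is the required divisor.

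The main obstacle is the purity step, since $\Slip^H$ need not be normal and most textbook statements assume the source is also factorial. The classical version bootstraps to arbitrary integral source by passing to the normalization $\nu\colon \widetilde{\Slip^H}\to \Slip^H$, which is finite and birational: apply the classical purity theorem to $\pi\circ\nu$, whose target is locally factorial near $y$, then push the resulting codimension-one exceptional components down via $\nu$. Being images under a finite surjection, they remain codimension-one irreducible closed subsets of $\Slip^H$, while their image in $\Hilb^{d,\sm}(\mathbb{P}^{n-1})$ is unchanged and therefore still of codimension at least two.
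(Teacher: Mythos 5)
Your proposal is correct and follows essentially the same route as the paper: identify $\Slip^H \to \Hilb^{d,\sm}(\mathbb{P}^{n-1})$ as a proper birational morphism with target smooth near $[V(I^{\sat})]$, then invoke van der Waerden purity of the exceptional locus (the paper cites Shafarevich, Theorem~2, p.~120, which is this statement). The one remark worth making is that your normalization detour is unnecessary: the purity theorem (in Shafarevich's formulation and in EGA~IV) only requires the \emph{target} to be locally factorial at the relevant point, with no normality or factoriality hypothesis on the source, so you can apply it directly to $\pi$ without passing to $\widetilde{\Slip^H}$.
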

    \begin{proof}
        The locally closed immersion from
        Proposition~\ref{ref:partialToGrothendieck:prop} defines a birational
        morphism $\Slip^H|_{\kappa} \to \Hilb^{d, \sm}(X|_{\kappa})$. Indeed, its restriction to
        the inverse image of the locus $\mathcal{V}$ of all smooth subschemes with Hilbert
        function $H$ is a surjective locally closed immersion onto the smooth
        scheme $\mathcal{V}$ and thus an isomorphism.
        The claim follows from applying~\cite[Theorem~2, p.120]{Shafarevich_1}
        to this map.
    \end{proof}

        \subsection{Fiber obstruction group for saturation}\label{ref:obfib_n_graded:sec}
        \newcommand{\cokern}[1]{C(#1^{\sat})}

        In this subsection we analyse the obstruction group further.
        We keep the toric setup as in the previous
        subsections, however in Subsection~\ref{ssec:fiberProjectiveSpace} we
        restrict to the projective space where more interesting results are
        obtained.

        Let $I$ be a homogeneous ideal in $\Sk$.
        For the pair $(K, J) := (I, I^{\sat})$, the fiber obstruction group
        from~\eqref{eq:obfib}
        can be ``underived'' as follows. Choose a transverse element $\ell$ as
        in Subsection~\ref{ssec:saturations}
        and let $\cokern{I}$ be defined by the exact sequence
        \begin{equation}\label{eq:cokernel}
            0\to \frac{\Sk}{I^{\sat}}\to \left( \frac{\Sk}{I^{\sat}} \right)_{\ell}\to
            \cokern{I}\to 0.
        \end{equation}
        Applying $\Hom(I^{\sat}/I, -)$ we obtain a long exact sequence.
        The multiplication by a large enough power of $\ell$ annihilates
        $I^{\sat}/I$ and is an isomorphism on $(\Sk/I^{\sat})_{\ell}$, hence
        $\Ext^{\bullet}_{\Sk}(I^{\sat}/I, (\Sk/I^{\sat})_{\ell}) = 0$ and the long exact
        sequence induces a homogeneous isomorphism
        \begin{equation}\label{eq:fiberObstructionConcrete}
            \Ext^1_{\Sk}\left( \frac{I^{\sat}}{I}, \frac{\Sk}{I^{\sat}}
            \right)  \simeq \Hom\left( \frac{I^{\sat}}{I},
            \cokern{I} \right).
        \end{equation}

        We can refine the above taking into account degrees. For $\midx, \ee\in
        \AA = \Pic(X)$ we say that $\midx\leq \ee$ if $\ee - \midx$ lies in
        the effective cone of $X$.
		Fix degrees $\midx \leq \ee$.
        Applying $\Hom(I^{\sat}_{\geq \mm}/I_{\geq \ee}, -)$ to $0\to
        (\Sk/I^{\sat})_{\geq \midx} \to ((\Sk/I^{\sat})_{\ell})_{\geq \midx} \to
        \cokern{I}_{\geq \midx}\to 0$ we arrive
        at the homogeneous isomorphism
        \begin{equation}\label{eq:gradedfiberObstructionConcrete}
            \Ext^1_{\Sk}\left( \frac{I^{\sat}_{\geq \midx}}{I_{\geq \ee}},
            \left(\frac{\Sk}{I^{\sat}}\right)_{\geq \midx} \right) \simeq \Hom\left( \frac{I^{\sat}_{\geq
             \midx}}{I_{\geq \ee}}, \cokern{I}_{\geq \midx} \right).
        \end{equation}
        By construction, the module $M := I^{\sat}_{\geq \midx}/I_{\geq \ee}$ is
        generated in degrees $\geq \midx$. The same is true for modules in the minimal resolution
        of $M$ since the ring $\Sk$ is positively
        graded. The Ext groups are computed from this resolution, so if we
        restrict to homogeneous maps, we can
        drop the subscripts in~\eqref{eq:gradedfiberObstructionConcrete} to obtain
        \begin{equation}\label{eq:fiberObstructionConcreteRestricted}
            \Ext^1_{\Sk}\left( \frac{I^{\sat}_{\geq \midx}}{I_{\geq \ee}},
            \frac{\Sk}{I^{\sat}_{\geq \midx}} \right)_{\zerodeg} \simeq \Hom\left( \frac{I^{\sat}_{\geq
            \midx}}{I_{\geq \ee}}, \cokern{I_{\geq \ee}} \right)_{\zerodeg}.
        \end{equation}

        \subsubsection{Case of the projective
        space}\label{ssec:fiberProjectiveSpace}
        \newcommand{\Homstar}{{}^*\!\Hom}%
		In this subsection we assume that $X$ is a projective space, so that
        $S$ becomes standard $\mathbb{N}$-graded polynomial ring. As always,
        take $I \subseteq \Sk$.
		In this setup the $\Sk/I^{\sat}$-module $\cokern{I}$ is pleasantly explicit. If $\dim
        \Sk/I^{\sat} = 1$ it admits connections to the canonical module
        $\omega_{\Sk/I^{\sat}}$~\cite[\S13.1]{24hours}, which
        are useful since the latter is sometimes even easier to describe. We discuss
        this connection now.

        Recall the graded dual operator: for a graded $\kappa$-vector space $M =
        \bigoplus_{e} M_{e}$ with every $M_{e}$ finite-dimensional we define $\Homstar(M, \kappa) :=
        \bigoplus_{e} \Hom(M_{-e}, \kappa) \subseteq \Hom(M, \kappa)$. The
        inclusion is an equality if $M$ has finite dimension but not in
        general. If $M$ is an $\Sk$-module, then $\Homstar(M, \kappa)$ is an
        $\Sk$-module as well by $(s\cdot \varphi)(m) := \varphi(sm)$ and the
        canonical map $M\to \Homstar(\Homstar(M, \kappa), \kappa)$ is an isomorphism
        of $\Sk$-modules.

        \begin{proposition}\label{ref:CIAndcanonicalModule:prop}
            Suppose that
            $\Sk/I^{\sat} = 1$. Then the module $\cokern{I}$ is isomorphic to
            $\Homstar(\omega_{\Sk/I^{\sat}}, \kappa)$ and to the top local cohomology
            of $\Sk/I^{\sat}$.
        \end{proposition}
        \begin{proof}
            Let $\ell_1$ be a transversal element for $I$.
            Since $\Sk/I^{\sat}$ is one-dimensional, we may pick $\dim \Sk - 1$
            elements $\ell_2, \ldots ,\ell_{\dim \Sk}$ of $(\Sk)_+$ that annihilate
            $\Sk/I^{\sat}$ and such that the radical of $(\ell_1, \ell_2, \ldots
            ,\ell_{\dim \Sk})$ is $(\Sk)_+$.
            Using properties of local cohomology~\cite[Proposition~7.3(b),
            Theorem~7.13]{24hours}, we conclude that $\cokern{I} \simeq
            H^1_{(\Sk)_+}(\Sk/I^{\sat})$. Since $\Sk/I^{\sat}$ is one-dimensional,
            by~\cite[Theorem~13.5]{24hours}
            we obtain $\Homstar(\cokern{I}, \kappa) \simeq \omega_{\Sk/I^{\sat}}$.
            Applying $\Homstar(-, \kappa)$ to this isomorphism we get the claim.
        \end{proof}
        The above Proposition~\ref{ref:CIAndcanonicalModule:prop} becomes
        easier in the Gorenstein case. Recall that $\Sk/I^{\sat}$ is
        \emph{Gorenstein} if for some (or every) transverse element $\ell$ the
        socle $\Sk/(I^{\sat}+(\ell))$ is
        one-dimensional~\cite[Proposition~21.5]{Eisenbud}.
        \begin{corollary}[Gorenstein case]\label{ref:CIForGorenstein:prop}
            Suppose that $\Sk/I^{\sat}$ is one-dimensional and Gorenstein. Then the module $\cokern{I}$ is isomorphic to
            $\Homstar(\Sk/I^{\sat}, \kappa)(-a)$ where $a$ is the largest index
            such that $H_{\Sk/I^{\sat}}(a) \neq H_{\Sk/I^{\sat}}(a+1)$.
        \end{corollary}
        \begin{proof}
            It follows from Grothendieck's local
            duality~\cite[Theorem~18.7]{24hours} and from
            Proposition~\ref{ref:CIAndcanonicalModule:prop} that $\cokern{I}
            \simeq \Homstar(\Sk/I^{\sat}, \kappa)(-a)$ for a certain $a$. It
            follows from this isomorphism that $a$ is the largest degree in
            which $\cokern{I}$ is nonzero. From~\eqref{eq:cokernel} it follows
            that this agrees with the $a$ in the statement.
        \end{proof}
        \begin{corollary}\label{ref:fiberObstructionForGorenstein:cor}
            Suppose that $I^{\sat}$ is one-dimensional and Gorenstein.
            Then
            \[
                \ObFib(I, I^{\sat}) \simeq
                \Hom_{\kappa}\left(\left(\frac{I^{\sat}}{I+(I^{\sat})^2}\right)_a,
                \kappa \right).
            \]
        \end{corollary}
        \begin{proof}
            Let $R = \Sk/I^{\sat}$. Using Corollary~\ref{ref:CIForGorenstein:prop}
            and~\eqref{eq:fiberObstructionConcrete} we obtain that
            \begin{align*}
                &\ObFib(I, I^{\sat})
                \simeq \Hom_{R}\left( \frac{I^{\sat}}{I+(I^{\sat})^2},
                \cokern{I} \right)_{0}
                \simeq \Hom_{R}\left( \frac{I^{\sat}}{I+(I^{\sat})^2},
                \Homstar(R, \kappa)(-a) \right)_{0}\\
                & \simeq \Hom_{R}\left( \frac{I^{\sat}}{I+(I^{\sat})^2},
                \Homstar(R, \kappa) \right)_{-a}\stackrel{(\star)}{\simeq}
                \Hom_{\kappa}\left(\frac{I^{\sat}}{I+(I^{\sat})^2}, \kappa
                \right)_{-a}  \simeq
                \Hom_{\kappa}\left(\left(\frac{I^{\sat}}{I+(I^{\sat})^2}\right)_a, \kappa
                \right),
            \end{align*}
            where $(\star)$ follows from the graded version of the tensor-Hom
            adjunction.
        \end{proof}

	\subsection{Smoothness of the saturated locus in the standard graded case}\label{ref:smoothness_of_saturated:sec}

    In this subsection we consider the toric setup but restrict to $X =
    \mathbb{P}^n$, so that $S$ is standard $\mathbb{N}$-graded.

    In this setup we provide
    classes of \emph{saturated} ideals which yield smooth points of the multigraded Hilbert schemes. 
    In addition to being interesting on its own, understanding these cases
	increases the applicability of Theorem~\ref{ref:saturationDeformWhenTangentSurjective:thm}.


    For a saturated homogeneous one-dimensional ideal $I^{\sat}\subseteq \Sk$ with a transverse element
    $\ell$ which is a linear form we fix a decomposition $\Sk = S'[\ell]$,
    where $S'$ is a polynomial subring. We define the one-parameter family for
    $(I^{\sat},\ell, S')$ as
    \[
        \begin{tikzcd}
            \Spec(\Sk/I^{\sat})\ar[rd, "\pi"']\ar[r, hook]  & \Spec(\Sk)  \simeq  \Spec(S')\times
            \mathbb{A}^1\ar[d, "\pr_2"]\\
            & \mathbb{A}^1 = \Spec(\kappa[\ell])
        \end{tikzcd}
    \]
    where $\pi$ is induced by the inclusion $\kappa[\ell]\to \Sk/I^{\sat}$.
    In the projective space $\Proj(\Sk)$, the quotient $\Proj(\Sk/I^{\sat})$ corresponds
    to a zero-dimensional subscheme disjoint from the hyperplane $(\ell = 0)$. It follows
    that for every nonzero $\lambda\in \kappa$ the scheme
    $\pi^{-1}(\lambda) = \Spec(\Sk/(I^{\sat}+(\ell-\lambda)))$ is isomorphic to $\Proj(\Sk/I^{\sat})$.
    \begin{lemma}\label{ref:family:lem}
        For every $(I^{\sat}, \ell, S')$ as above, the map $\pi$ is finite and flat.
    \end{lemma}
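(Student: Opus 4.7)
The plan is to prove finiteness and flatness separately, both by exploiting the graded structure.

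For finiteness, I would start by setting $R := S/I^{\sat}$ and noting that since $\ell$ is transverse, it is a nonzerodivisor on $R$, so $R/\ell R$ is a zero-dimensional graded quotient of the standard graded polynomial ring $S$. Such a quotient is Artinian and positively graded, hence finite-dimensional over $\kk$. Choose homogeneous elements $r_1, \ldots, r_k \in R$ whose images form a $\kk$-basis of $R/\ell R$, with $\deg r_i = d_i$. I would then show by induction on the degree $d$ that $R_d \subseteq \sum_i \kk[\ell]\, r_i$: any homogeneous $x \in R_d$ can be written, modulo $\ell R$, as a $\kk$-linear combination of those $r_i$ with $d_i = d$, so $x - \sum c_i r_i = \ell y$ for some $y \in R_{d-1}$, and the inductive hypothesis finishes the step. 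This proves that $R$ is a finitely generated $\kk[\ell]$-module, hence $\pi$ is finite.

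For flatness, I would show the stronger statement that $R$ is a free $\kk[\ell]$-module. Consider the graded surjection
\[
    \varphi\colon F := \bigoplus_{i=1}^k \kk[\ell](-d_i) \twoheadrightarrow R,\qquad e_i \mapsto r_i,
\]
and let $K = \ker\varphi$. Reducing modulo $\ell$ gives an induced map $F/\ell F \to R/\ell R$, which by construction sends the basis $\{\bar e_i\}$ bijectively to the basis $\{\bar r_i\}$, hence is an isomorphism. The snake lemma then yields $K/\ell K = 0$, i.e., $\ell K = K$. Since $K$ is a graded submodule of the $\mathbb{N}$-graded (shifted) module $F$, it is bounded below in degree; and from $K_d = \ell K_{d-1}$ for all $d$, an induction starting from the smallest degree in which $K$ could be nonzero forces $K = 0$. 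Thus $\varphi$ is an isomorphism and $R$ is free over $\kk[\ell]$, proving flatness.

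The main subtlety is that Nakayama's lemma does not apply over $\kk[\ell]$, since $(\ell)$ is not contained in the Jacobson radical — for instance, $\kk[\ell,\ell^{-1}]/\ell = 0$ while the module is nonzero. The key point that makes the argument work is the graded-boundedness of $R$ (and hence of $K$) in the $\mathbb{N}$-grading, which lets the step ``$\ell M = M \Rightarrow M = 0$'' go through by degree induction rather than by standard Nakayama.
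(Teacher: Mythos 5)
Your proof is correct. The finiteness part is essentially identical to the paper's: both lift a $\kk$-basis of $R/\ell R$ and propagate it to a $\kk[\ell]$-generating set of $R$ by induction on degree.

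For flatness you take a genuinely different route. The paper observes that $R$ is torsion-free as a $\kk[\ell]$-module (any torsion element would have a lowest-degree homogeneous component killed by a power of $\ell$, contradicting transversality), and then invokes the fact that a torsion-free module over a PID is flat. You instead build the graded surjection $\varphi\colon \bigoplus_i \kk[\ell](-d_i) \twoheadrightarrow R$ from the same generators, apply the snake lemma to $\cdot\ell$ to get $K/\ell K = 0$ for $K = \ker\varphi$, and kill $K$ by degree induction — effectively a hand-made graded Nakayama argument, which you correctly flag as the point that replaces the usual (inapplicable) Nakayama over $\kk[\ell]$. Your argument is a bit longer but directly exhibits $R$ as a free $\kk[\ell]$-module, which is slightly stronger than what the paper states; the paper's torsion-free observation is more economical but leans on a structure fact about modules over a PID. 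Both are valid and both exploit the $\mathbb{N}$-grading in an essential way.

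Two small remarks on your write-up: the snake lemma step silently uses that $\ell$ acts injectively on $R$, which is exactly the transversality hypothesis, so it is worth saying so explicitly; and the degree-boundedness of $K$ follows because $F = \bigoplus_i \kk[\ell](-d_i)$ itself vanishes in degrees below $\min_i d_i$, which you could state rather than merely assert.
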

    \begin{proof}
        The $\kappa$-algebra $\Sk/(I^{\sat}+(\ell))$ is finitely generated and zero-dimensional, hence a
        finite-dimensional $\kappa$-vector space.
        Fix its basis $\mathcal{B}$. Since $\Sk/I^{\sat}$ is $\mathbb{N}$-graded,
        induction by degree proves that $\Sk/I^{\sat}$ is spanned by $\mathcal{B}$ as a
        $\kappa[\ell]$-module, so $\pi$ is finite. Suppose that there is a nonzero element of $\Sk/I^{\sat}$ annihilated
        by some nonzero polynomial in $\kappa[\ell]$. Taking leading forms, we
        find a nonzero element of $\Sk/I^{\sat}$ annihilated by a power of $\ell$, a
        contradiction with transversality of $\ell$. Hence, the $\kappa[\ell]$-module $\Sk/I^{\sat}$ is torsion-free, so
        $\pi$ is flat.
    \end{proof}
    \begin{proposition}\label{ref:extendingTangentVector:prop}
        Suppose that $\ell$ is a linear form transverse to a saturated
        one-dimensional ideal
        $I^{\sat}$ and that $\Spec(\Sk/(I^{\sat}+(\ell))\subseteq \Spec(\Sk)$ is unobstructed.
        Then the natural map
        \[
            \frac{\Hom_{\Sk}\left( I^{\sat}, \frac{\Sk}{I^{\sat}} \right)}{\ell\cdot \Hom_{\Sk}\left( I^{\sat}, \frac{\Sk}{I^{\sat}} \right)}\to \Hom_{\Sk}\left( I^{\sat},
            \frac{\Sk}{I^{\sat}+(\ell)}
            \right)
        \]
        is an isomorphism.
    \end{proposition}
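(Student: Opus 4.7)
The plan is to interpret both sides of the displayed map in terms of the cotangent sheaf of the Hilbert scheme $\mathcal{H} := \Hilb^d(\Spec S')$ (where $S = S'[\ell]$), and then conclude from unobstructedness that a certain pullback is locally free. Write $\bar I := (I^{\sat}+(\ell))/(\ell)\subseteq S'$. By the universal property of $\mathcal{H}$, the flat family $\pi$ of Lemma~\ref{ref:family:lem} induces a morphism $\phi\colon \Spec \kk[\ell]\to \mathcal{H}$ sending $0$ to $[\Spec(S'/\bar I)]$.

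I would first identify the target of the displayed map: transversality gives $I^{\sat}\cap (\ell) = \ell I^{\sat}$, hence $I^{\sat}/\ell I^{\sat} \simeq \bar I$ as $S'$-modules, and every $S$-linear map $I^{\sat}\to S/(I^{\sat}+(\ell))$ factors through $\bar I$, giving
\[
\Hom_S(I^{\sat}, S/(I^{\sat}+(\ell))) \simeq \Hom_{S'}(\bar I, S'/\bar I) = T_{\phi(0)}\mathcal{H}.
\]
Next, by standard relative deformation theory, $\Hom_S(I^{\sat}, S/I^{\sat})$ is in bijection with the flat deformations of $\pi$ over $\Spec \kk[\ell][\varepsilon]/\varepsilon^2$ (relative flatness over the base uses Lemma~\ref{ref:family:lem}). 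The universal property of $\mathcal{H}$ then identifies these with morphisms $\Spec \kk[\ell][\varepsilon]/\varepsilon^2\to \mathcal{H}$ extending $\phi$, and by Lemma~\ref{ref:vectorfields:lem} the latter form the module of vector fields $\Hom_{\kk[\ell]}(\phi^*\Omega_{\mathcal{H}/\kk}, \kk[\ell])$. Under these identifications the map of the proposition corresponds to tensoring a vector field with $\kk$ over $\kk[\ell]$, which factors through the quotient by $\ell$.

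To finish, unobstructedness of $\Spec(S/(I^{\sat}+(\ell)))$ means that $\phi(0)\in\mathcal{H}$ is a smooth point, so the coherent sheaf $\Omega_{\mathcal{H}/\kk}$ is locally free of some rank $r$ near $\phi(0)$. Hence $M := \phi^*\Omega_{\mathcal{H}/\kk}$ is a finitely generated $\kk[\ell]$-module, locally free near $0$. By the structure theorem over the PID $\kk[\ell]$, $M \simeq \kk[\ell]^r \oplus T$ with $T$ torsion supported off $\ell = 0$; such $T$ admits no nonzero map to $\kk[\ell]$ and vanishes modulo $\ell$, so both $\Hom_{\kk[\ell]}(M, \kk[\ell])/\ell$ and $\Hom_\kk(M/\ell M, \kk)$ equal $\kk^r$ and the natural map between them is an isomorphism.

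I expect the main obstacle to be the careful verification in the middle step that $\Hom_S(I^{\sat}, S/I^{\sat})$ really classifies deformations flat relative to $\Spec \kk[\ell][\varepsilon]/\varepsilon^2$ (not merely over $\Spec \kk[\varepsilon]/\varepsilon^2$), and that the abstract functorial identifications match the concrete ``compose with the quotient $S/I^{\sat}\onto S/(I^{\sat}+(\ell))$'' description of the map in the statement.
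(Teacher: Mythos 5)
Your proposal is essentially the same argument as the paper's, with one genuine simplification and the same residual gap. Like the paper, you run the proof through the map $\phi\colon\Spec\kk[\ell]\to\Hilb^d(\Spec S')$ coming from the flat family $\pi$, identify $\Hom_S(I^{\sat},S/I^{\sat})$ with the $\kk[\ell]$-module of pulled-back vector fields and the target with the tangent space at $\phi(0)$, and use unobstructedness to control $\phi^*\Omega_{\mathcal{H}/\kk}$. The one place you deviate is in how you establish that $\phi^*\Omega_{\mathcal{H}/\kk}$ behaves like a free module for the purposes of the restriction map: the paper first propagates smoothness from $\phi(0)$ to the whole image of $\phi$ by observing that all fibers of $\pi$ over $\lambda\ne 0$ are $\mathbb{G}_m$-translates of the same abstract scheme, so $\phi^*\Omega$ is actually free on all of $\mathbb{A}^1$; you instead only use that the smooth locus is open, deduce local freeness at $0\in\mathbb{A}^1$, and then invoke the structure theorem for finitely generated modules over $\kk[\ell]$ to split off a torsion summand supported away from $0$, which contributes nothing either to $\Hom(-,\kk[\ell])$ or to $(-)\otimes\kk$. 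This is a mild but real simplification: it avoids the equivariance argument entirely. Finally, you correctly identify the step you did not carry out — the relative identification of $\Hom_S(I^{\sat},S/I^{\sat})$ with vector fields for $\phi$ (and the compatibility of all these identifications with the map in the statement) — and this is precisely what the paper delegates to ``a generalization of \cite[Theorem~10.1]{Stromme_Hilbert}''; so the gap you flag is the same one the paper covers by citation rather than by proof.
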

    The theorem does not hold under the slightly weaker assumption that
    $\Proj(\Sk/I^{\sat}) \simeq \Spec(\Sk/(I^{\sat}+(\ell-1))\subseteq \Proj(\Sk)$ is
    unobstructed; one counterexample is a general tuple of $8$ points on
    $\mathbb{P}^3$.
    \begin{proof}
        By left-exactness of Hom, it is enough to prove that $\Hom_{\Sk}(I^{\sat},
        \Sk/I^{\sat})\to \Hom_{\Sk}(I^{\sat}, \Sk/(I^{\sat}+(\ell))$ is surjective.
        The family $\pi$ above provides a map $\widetilde{\pi}\colon \mathbb{A}^1\to
        \Hilb^d(\Spec(S'))$, where
        $d = \dim_{\kappa} \frac{\Sk}{I^{\sat}+(\ell)}$. By assumption, the point
        $\widetilde{\pi}(0)\in \Hilb^d(\Spec(S'))$ is smooth, hence a general
        point of the image is smooth as well. But all $\kappa$-points of
        $\widetilde{\pi}|_{\mathbb{A}^1\setminus \{0\}}$ correspond to different
        embeddings of the same abstract scheme $\Proj(\Sk/I^{\sat})$, hence we conclude that every point in the
        image of $\widetilde{\pi}$ is smooth. In particular, the cotangent sheaf
        of $\Hilb^d(\Spec(S'))$ pulled back to $\mathbb{A}^1$ is a
        \emph{free} sheaf of $\mathbb{A}^1$. As a result, we obtain
        surjectivity of the natural
        restriction map from
        the module of pulled back vector fields on $\Spec(\kappa[\ell])$ to such
        a module on $\Spec(\kappa[\ell]/(\ell))$.

        By Lemma~\ref{ref:vectorfields:lem} and a generalization
        of~\cite[Theorem~10.1]{Stromme_Hilbert}, we conclude that the
        $\kappa[\ell]$-module
        of (pulled back) vector fields for $\widetilde{\pi}$ is
        $\Hom_{\Sk}(I^{\sat}, \Sk/I^{\sat})$ and that this is a free $\kappa[\ell]$-module. Applying
        the same Lemma and reference
        to $\Spec(\kappa) = (\ell=0)\to \Hilb^d(\Spec(S'))$ we obtain
        $\Hom_{S'}(I^{\sat}/(I^{\sat}\cap (\ell), \Sk/(I^{\sat}+(\ell))$.
        As mentioned above,
        the restriction map of modules of vector fields
        \[
            \Hom_{\Sk}\left( I^{\sat}, \frac{\Sk}{I^{\sat}} \right)\to
            \Hom_{S'}\left(\frac{I^{\sat}}{I^{\sat}\cap(\ell)},\; \frac{\Sk}{I^{\sat}+(\ell)}
            \right)
        \]
        is surjective. Since $\ell$ is a nonzerodivisor in $\Sk/I^{\sat}$, we have
        $I^{\sat}\cap (\ell) = I^{\sat}\cdot (\ell)$, so the natural map
        \[
            \Hom_{\Sk}\left( I^{\sat}, \frac{\Sk}{I^{\sat}+(\ell)} \right)\to \Hom_{S'}\left(
            \frac{I^{\sat}}{I^{\sat}\cap (\ell)},\; \frac{\Sk}{I^{\sat}+(\ell)}
            \right)
        \]
        is bijective. It follows that the restriction $\Hom_{\Sk}(I^{\sat}, \Sk/I^{\sat})\to
        \Hom_{\Sk}\left(I^{\sat}, \Sk/(I^{\sat}+(\ell))\right)$ is surjective as well.
    \end{proof}

    \begin{corollary}\label{ref:zeroMap:cor}
        Under the assumptions of
        Proposition~\ref{ref:extendingTangentVector:prop}, the multiplication
        by $\ell$ on $\Ext^1_{\Sk}(I^{\sat}, \Sk/I^{\sat})$ is injective. Consequently, for every
        subideal $K \subseteq I^{\sat}$ such that $I^{\sat}\cdot \ell^{\gg 0} \subseteq K$ the natural map
        $\Ext^1_{\Sk}(I^{\sat}/K, \Sk/I^{\sat})\to \Ext^1_{\Sk}(I^{\sat}, \Sk/I^{\sat})$
        is zero.
    \end{corollary}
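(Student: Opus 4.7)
The plan is to deduce both statements from the long exact sequence associated to multiplication by $\ell$ on $S/I^{\sat}$.

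For the first assertion, I would start from the short exact sequence
\[
    0 \to \frac{S}{I^{\sat}} \xrightarrow{\;\cdot\ell\;} \frac{S}{I^{\sat}} \to \frac{S}{I^{\sat}+(\ell)} \to 0,
\]
which is exact because $\ell$ is transverse to $I^{\sat}$, i.e.\ a nonzerodivisor on $S/I^{\sat}$. Applying $\Hom_S(I^{\sat},-)$ yields a long exact sequence in which the segment
\[
    \Hom_S\!\left(I^{\sat},\tfrac{S}{I^{\sat}}\right) \xrightarrow{\;\cdot\ell\;} \Hom_S\!\left(I^{\sat},\tfrac{S}{I^{\sat}}\right) \to \Hom_S\!\left(I^{\sat},\tfrac{S}{I^{\sat}+(\ell)}\right) \xrightarrow{\;\delta\;} \Ext^1_S\!\left(I^{\sat},\tfrac{S}{I^{\sat}}\right) \xrightarrow{\;\cdot\ell\;} \Ext^1_S\!\left(I^{\sat},\tfrac{S}{I^{\sat}}\right)
\]
appears. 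The first three terms are precisely the ones whose quotient map is identified in Proposition~\ref{ref:extendingTangentVector:prop}; since that map is an isomorphism, the third arrow is surjective, hence $\delta = 0$, and therefore the last map (multiplication by $\ell$) is injective. This is the first assertion.

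For the second assertion, I apply $\Hom_S(-, S/I^{\sat})$ to the short exact sequence $0 \to K \to I^{\sat} \to I^{\sat}/K \to 0$ and consider the resulting $S$-linear map $\Ext^1_S(I^{\sat}/K, S/I^{\sat}) \to \Ext^1_S(I^{\sat}, S/I^{\sat})$. By the assumption $\ell^N \cdot I^{\sat} \subseteq K$ for some $N \gg 0$, the module $I^{\sat}/K$ is annihilated by $\ell^N$, and hence so is $\Ext^1_S(I^{\sat}/K, S/I^{\sat})$. By $S$-linearity the image of this map is also killed by $\ell^N$. The first assertion says multiplication by $\ell$ is injective on the target, so the image must be zero.

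The statement is essentially a formal consequence of the previously established isomorphism, so I do not anticipate a real obstacle; the only point that requires mild care is keeping track of the fact that $S$-linearity of the connecting maps propagates the $\ell$-action from source to target, which makes the annihilation argument in the second part work.
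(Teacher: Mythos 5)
Your proposal is correct and follows essentially the same route as the paper: the first assertion comes from applying $\Hom_S(I^{\sat},-)$ to the multiplication-by-$\ell$ short exact sequence and using Proposition~\ref{ref:extendingTangentVector:prop} to see the connecting map vanishes, and the second from the nilpotence of $\ell$ on $I^{\sat}/K$ (hence on $\Ext^1_S(I^{\sat}/K, S/I^{\sat})$) combined with the injectivity just proved. The only cosmetic difference is that the paper records the grading twist $S/I^{\sat}(-1)$ in the sequence, which is immaterial to the argument.
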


    \begin{proof}
        Applying $\Hom_{\Sk}(I^{\sat}, -)$ to the short exact sequence
        \[
            0\to \frac{\Sk}{I^{\sat}}(-1)\to \frac{\Sk}{I^{\sat}}\to \frac{\Sk}{I^{\sat}+(\ell)}\to 0
        \]
        and using
        Proposition~\ref{ref:extendingTangentVector:prop} we obtain that the
        multiplication by $\ell$ on $\Ext_{\Sk}^1(I^{\sat}, \Sk/I^{\sat})$ is injective. The
        multiplication by $\ell$ on $I^{\sat}/K$ is nilpotent, so it is
        nilpotent on
        $\Ext^1_{\Sk}(I^{\sat}/K, \Sk/I^{\sat})$ as well, whence the claim.
    \end{proof}

    \begin{theorem}[unobstructedness of Artinian reduction implies smoothness
        in multigraded setting]\label{ref:smoothPoints:thm}
        In this theorem we assume that $\kk = \kappa$ is a field (but we keep
        using $\kappa$ for notational consistency).
        Let $\Sk$ be a standard graded polynomial ring and $I^{\sat}\subseteq
        \Sk$ be a saturated one-dimensional
        homogeneous ideal with transverse element $\ell$ which is a linear
        form. If $\Spec(\Sk/(I^{\sat}+(\ell))\subseteq \Spec(\Sk)$ is unobstructed, then
        $[I^{\sat}]\in \Hilb_{I^{\sat}}$ is a smooth point.

    \end{theorem}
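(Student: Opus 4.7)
The plan is to prove smoothness of $[I^{\sat}]\in \Hilb_{I^{\sat}}$ by showing that the whole obstruction module $\Ext^1 := \Ext^1_S(I^{\sat}, S/I^{\sat})$ vanishes, using graded Nakayama over $\kk[\ell]$. This will in particular give $\Ext^1_{\zerodeg}=0$, whence $[I^{\sat}]$ is unobstructed and smooth.

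First, I would verify that $\Ext^1$ is a finitely generated free graded $\kk[\ell]$-module. Finite generation follows because $S/I^{\sat}$ is $1$-dimensional and Cohen--Macaulay, $\ell$ is a nonzerodivisor, and $(S/I^{\sat})/\ell(S/I^{\sat})$ is Artinian; graded Nakayama then forces $S/I^{\sat}$ to be finitely generated over $\kk[\ell]$, and hence so is the finitely generated $S/I^{\sat}$-module $\Ext^1$. Corollary~\ref{ref:zeroMap:cor} gives that $\ell$ acts injectively on $\Ext^1$, so $\Ext^1$ is torsion-free over the PID $\kk[\ell]$, hence free.

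Next I would apply $\Hom_S(I^{\sat}, -)$ to the short exact sequence
\[
0 \to S/I^{\sat}(-1) \xrightarrow{\cdot \ell} S/I^{\sat} \to S/(I^{\sat}+(\ell)) \to 0
\]
and use the injectivity of $\ell$ on $\Ext^1$ to extract an injection
\[
\Ext^1/\ell\Ext^1 \;\hookrightarrow\; \Ext^1_S\bigl(I^{\sat},\; S/(I^{\sat}+(\ell))\bigr).
\]
If the target vanishes, then graded Nakayama over the graded local ring $(\kk[\ell],(\ell))$ together with the boundedness below of $\Ext^1$ (from Step 1) forces $\Ext^1 = 0$, finishing the proof. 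To control the target, I would combine the exact sequence $0 \to I^{\sat} \to I^{\sat}+(\ell) \to S/I^{\sat}(-1) \to 0$ (using $I^{\sat}\cap(\ell)=\ell\,I^{\sat}$ from transversality) with the unobstructedness hypothesis on $\Spec(S/(I^{\sat}+(\ell)))$, transported into the multigraded setting via a Bialynicki-Birula-type argument: the Artin reduction $[I^{\sat}+(\ell)]$ is a $\mathbb{G}_m$-fixed point of $\Hilb^d(\Spec S)$ which is smooth by hypothesis, so it is smooth in the fixed-point subscheme $\sqcup_{H'}\Hilb^{H'}$, in particular in $\Hilb_{I^{\sat}+(\ell)}$.

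The main obstacle is this last translation: deducing from smoothness at $[I^{\sat}+(\ell)]$ in $\Hilb_{I^{\sat}+(\ell)}$ the precise vanishing of $\Ext^1_S(I^{\sat},S/(I^{\sat}+(\ell)))$ needed above. In principle this requires careful bookkeeping in the long exact sequence obtained from $0 \to I^{\sat} \to I^{\sat}+(\ell) \to S/I^{\sat}(-1) \to 0$ (relating $\Ext^1_S(I^{\sat},-)$ with $\Ext^1_S(I^{\sat}+(\ell),-)$ and higher $\Ext$-groups of $S/I^{\sat}$), since smoothness of a point in a Hilbert scheme does not automatically kill the full Ext-module of obstructions --- only those obstructions arising from lifting problems. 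The delicate point will be to argue either directly that every class in $\Ext^1_S(I^{\sat},S/(I^{\sat}+(\ell)))$ arises from a genuine obstruction that vanishes by hypothesis, or to upgrade the hypothesis via Prop.~\ref{ref:extendingTangentVector:prop} to an actual vanishing statement.
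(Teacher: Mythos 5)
You propose to show that the \emph{entire module} $\Ext^1_S(I^{\sat}, S/I^{\sat})$ vanishes, and you try to achieve this by proving that $\Ext^1_S(I^{\sat}, S/(I^{\sat}+(\ell)))=0$ and then applying graded Nakayama. But neither of these modules vanishes in general, even under the hypotheses of the theorem. For a concrete counterexample, take $I^{\sat}=(f,g)$ a complete intersection in $S=\kk[\alpha_0,\alpha_1,\alpha_2]$, so $S/(I^{\sat}+(\ell))$ is a complete intersection Artinian quotient and is certainly unobstructed. The Koszul resolution $0\to S(-a-b)\to S(-a)\oplus S(-b)\to I^{\sat}\to 0$ (with $a=\deg f$, $b=\deg g$) gives $\Ext^1_S(I^{\sat}, S/I^{\sat})\simeq (S/I^{\sat})(a+b)$, which is very much nonzero; likewise $\Ext^1_S(I^{\sat}, S/(I^{\sat}+(\ell)))\simeq (S/(I^{\sat}+(\ell)))(a+b)\neq 0$. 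So the reduction you seek --- from smoothness of $[I^{\sat}+(\ell)]$ to vanishing of $\Ext^1_S(I^{\sat}, S/(I^{\sat}+(\ell)))$ --- is not a bookkeeping difficulty to be resolved; it is asking for something false. The point you raise yourself at the end, that ``smoothness of a point in a Hilbert scheme does not automatically kill the full Ext-module of obstructions,'' is precisely the obstacle, and it cannot be worked around within your framework.

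The paper never tries to prove that the obstruction \emph{space} vanishes --- only that the actual obstruction \emph{classes} do. It uses the flag Hilbert scheme $\Hilb_{I^{\sat}_{\geq e}\subseteq I^{\sat}}\simeq\Hilb_{I^{\sat}}$ and Theorem~\ref{ref:obstructionAtFlag:thm} to place the obstruction in the pullback group $\ObFlag$ sitting over both $\Ext^1_S(I^{\sat}_{\geq e}, S/I^{\sat}_{\geq e})_{\zerodeg}$ and $\Ext^1_S(I^{\sat}, S/I^{\sat})_{\zerodeg}$. Smoothness of $\Proj(S/I^{\sat})\subseteq\Proj(S)$ in the Grothendieck Hilbert scheme (deduced from the hypothesis via the one-parameter family $\pi$) kills the image of the obstruction in the left factor, while Corollary~\ref{ref:zeroMap:cor} --- which gives injectivity of $\ell$ on $\Ext^1_S(I^{\sat},S/I^{\sat})$ and hence injectivity of $\Ext^1_S(I^{\sat},S/I^{\sat})_{\zerodeg}\to\Ext^1_S(I^{\sat}_{\geq e},S/I^{\sat})_{\zerodeg}$ --- makes the right vertical map into the pullback injective. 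Together these force every obstruction class itself to vanish. Your Step using Corollary~\ref{ref:zeroMap:cor} is similar in spirit (it is the same source of $\ell$-injectivity), but you then point it at the wrong target: you need to trap the obstruction class, not annihilate the whole Ext-module.
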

    \begin{proof}
        Let $d = \dim_{\kappa} \Sk/(I^{\sat}+(\ell))$ and fix $e\gg 0$.
        Consider the canonical map defined in~\S\ref{ssec:smoothnessOfSaturated}: 
        \[
            \begin{tikzcd}
                \Hilb_{I^{\sat}} \ar[r, "\simeq", "\pr_{I^{\sat}}^{-1}"'] &
                \Hilb_{I^{\sat}_{\geq e}\subseteq I^{\sat}} \ar[r, "\pr_{I^{\sat}_{\geq e}}"] &
                \Hilb_{I^{\sat}_{\geq e}} \simeq \Hilb^d(\mathbb{P}^{\dim \Sk-1}).
            \end{tikzcd}
        \]
        The natural map $\Hom_{\Sk}(I^{\sat}_{\geq e}, \Sk/I^{\sat}_{\geq e})_{0}\to
        \Hom_{\Sk}(I^{\sat}_{\geq e}, \Sk/I^{\sat})_{0}$ is surjective, hence
        Theorem~\ref{ref:obstructionAtFlag:thm} applies and shows that
        $[I^{\sat}_{\geq e}\subseteq I^{\sat}]$ has an obstruction group $\ObFlag$ given by
        \[
            \begin{tikzcd}
                \ObFlag\ar[r]\ar[d]\arrow[dr, phantom,
                "\usebox\pullback" , very near start, color=black] &
                \Ext^1_{\Sk}\left( I^{\sat},
                \frac{\Sk}{I^{\sat}} \right)_{0}\ar[d]\\
                \Ext^1_{\Sk}\left(I^{\sat}_{\geq e}, \frac{\Sk}{I^{\sat}_{\geq
                e}}\right)_{0}\ar[r] & \Ext^1_{\Sk}\left( I^{\sat}_{\geq e},
                \frac{\Sk}{I^{\sat}} \right)_{0}.
            \end{tikzcd}
        \]
        Using the one-parameter family $\pi$ and the fact that
        unobstructedness is open, we conclude that
        $\Spec(\Sk/(I^{\sat}+(\ell-1))\subseteq \Spec(\Sk)$ is unobstructed,
        hence also $\Proj(\Sk/I^{\sat}) \subseteq
        \Proj(\Sk)$ is unobstructed, so $[I^{\sat}_{\geq e}]\in \Hilb_{I^{\sat}_{\geq e}}$ is
        a smooth point. This means that for any obstruction
        at $[I^{\sat}_{\geq e}\subseteq I^{\sat}]$
        arising from any small extension, its image under the left vertical
        map is zero.
        By Corollary~\ref{ref:zeroMap:cor}, the right vertical map is
        injective.
        But then it follows that every such obstruction is zero,
        hence $[I^{\sat}]\in \Hilb_{I^{\sat}}$ is smooth.
    \end{proof}

    \begin{corollary}\label{ref:usualClassesSmooth:cor}
        Assume that $\kk = \kappa$ is a field.
        Suppose that $I^{\sat}\subseteq \Sk$ is a homogeneous
        saturated ideal and that one of the following holds
        \begin{enumerate}
            \item $\Sk$ is at most three-dimensional (so that $\Proj(\Sk) =
                \mathbb{P}^2$ or $\mathbb{P}^1$),
            \item $\Sk$ is four-dimensional (so that $\Proj(\Sk) \simeq
                \mathbb{P}^3$) and $\Sk/I^{\sat}$ is Gorenstein,
            \item $I^{\sat}$ is a complete intersection.
        \end{enumerate}
        Then $I^{\sat}\in \Hilb_{I^{\sat}}$ is unobstructed.
    \end{corollary}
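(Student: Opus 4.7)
The plan is to apply Theorem~\ref{ref:smoothPoints:thm}, which reduces the task to exhibiting a transverse linear form $\ell$ and verifying that the Artin reduction $\Gamma := \Spec(S/(I^{\sat}+(\ell)))$ is unobstructed. Since smoothness of a $\kk$-point is preserved under base change to the algebraic closure, I may assume $\kk$ is infinite, and then prime avoidance against the finitely many associated primes of $S/I^{\sat}$ supplies a linear form $\ell \in S_1$ transverse to $I^{\sat}$. By the intrinsicness of unobstructedness (recalled just before Corollary~\ref{ref:exceptionalIsDivisor:cor}), it suffices to test unobstructedness of $\Gamma$ after embedding it in $\Spec(S/(\ell)) \simeq \mathbb{A}^{n-1}$, where $n = \dim S$.

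For (1), $n \leq 3$ gives $n-1 \leq 2$, so Fogarty's theorem (trivial for $n-1 \leq 1$, a classical result for a smooth surface) asserts that $\Hilb^d(\mathbb{A}^{n-1})$ is smooth, hence $\Gamma$ is unobstructed. For (3), if $I^{\sat} = (f_1, \ldots, f_{n-1})$ is a complete intersection then adjoining $\ell$ yields an $S$-regular sequence of length $n$, and the images of $f_1, \ldots, f_{n-1}$ in $S' := S/(\ell)$ form a regular sequence cutting out $\Gamma$; thus $\Gamma$ is an Artinian complete intersection in $\mathbb{A}^{n-1}$, and the complete-intersection locus is a smooth open subscheme of the Hilbert scheme of points, so $\Gamma$ is unobstructed.

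Case (2), with $n = 4$ and $\Gamma$ a codimension-three Artinian Gorenstein subscheme of $\mathbb{A}^3$, is the main obstacle and explains why Gorensteinness is imposed. The plan is to invoke Kleppe's smoothness theorem for Gorenstein points of $\Hilb^d(\mathbb{P}^3)$ (cited in the introduction) and transport it to the affine setting via the intrinsicness of unobstructedness; alternatively one may argue directly using the Buchsbaum--Eisenbud structure theorem, which presents the ideal of $\Gamma$ in $S'$ as the submaximal Pfaffians of a skew-symmetric matrix whose entries deform freely, giving a smooth versal family. The most delicate point in either approach is verifying that the smoothness statement transports cleanly between projective and affine embeddings of $\Gamma$, which is exactly what the intrinsicness claim already employed in the paper provides.
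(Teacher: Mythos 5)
Your proposal takes essentially the same approach as the paper: reduce via Theorem~\ref{ref:smoothPoints:thm} to unobstructedness of the Artinian reduction $\Gamma=\Spec(S/(I^{\sat}+(\ell)))$ after producing a transverse linear form $\ell$ over an infinite base field, and then invoke classical unobstructedness results case by case. Your cited sources differ from the paper's in inessential ways: the paper uses Schaps' Hilbert--Burch theorem for case (1) where you use Fogarty's smoothness theorem for $\Hilb^d$ of a smooth surface, and Hartshorne's complete-intersection deformation theorem for case (3) where you argue directly; for case (2), your Buchsbaum--Eisenbud alternative is exactly the paper's choice. Do note, though, that your first-choice route for case (2) --- invoking Kleppe's smoothness theorem and ``transporting to the affine setting via intrinsicness'' --- is delicate: the paper's introduction credits Kleppe with proving the very conclusion the corollary is after in the $n=4$ Gorenstein case (smoothness of $[I^{\sat}]\in\Hilb_{I^{\sat}}$), so using it here risks circularity, and in any event what Theorem~\ref{ref:smoothPoints:thm} requires as input is unobstructedness of the Artinian $\Gamma$ as a point of the Grothendieck Hilbert scheme $\Hilb^d(\mathbb{A}^3)$; the intrinsicness principle you invoke only transports unobstructedness between different ambient embeddings of the same finite scheme, not between the multigraded Hilbert scheme and $\Hilb^d$. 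The Buchsbaum--Eisenbud argument is therefore the route to keep.
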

    This corollary is
    classical and proven, for example, in the references given in the proof.
    \begin{proof}
        To prove smoothness, we may enlarge the base field $\kappa$, so we assume
        it is infinite. Then there exists a linear form $\ell$ transverse to $I^{\sat}$.
        The ideal $I^{\sat}/(I^{\sat}\cap (\ell))\subseteq \Sk/(\ell)$ is respectively
        \begin{enumerate}
            \item an ideal in $\Sk/(\ell) \simeq \kappa[\alpha_1, \alpha_2]$ or in
                $\kappa[\alpha_1]$,
            \item a Gorenstein ideal in $\kappa[\alpha_1, \alpha_2, \alpha_3]$,
            \item a complete intersection ideal.
        \end{enumerate}
        In these cases unobstructedness of $\Sk/(I^{\sat}+(\ell))$ follows from classical
        results: Hilbert-Burch Theorem~\cite[Theorem~1,
        Corollary~1]{Schaps__codimension_two}, Buchsbaum-Eisenbud
        theorem~\cite[Theorem~9.7]{HarDeform} and finally~\cite[Theorem~9.2]{HarDeform}, so
        the results follow from Theorem~\ref{ref:smoothPoints:thm}.
    \end{proof}

    \section{Examples}\label{sec:examples}

In this section we assume that $\Bbbk = \kappa$ is a field.

\subsection{The fiber obstruction group for cases with
$r=1$.}\label{ssec:ObFibGorenstein}

    Let $\Sk$ be standard $\mathbb{N}$-graded, with $\kappa$ infinite. Let $I\subseteq \Sk$ be
    homogeneous and let $a$ be the largest degree such that $\dim_{\kappa}
    (\Sk/I^{\sat})_a \neq \deg(\Sk/I^{\sat})$.
    As proven in Corollary~\ref{ref:fiberObstructionForGorenstein:cor} if
    $\Sk/I^{\sat}$ is Gorenstein, then $\ObFib := \ObFib(I,I^{\sat})$ is
    dual to the degree $a$ part of $I^{\sat}/(I+(I^{\sat})^2)$.

    \begin{example}[Line]\label{ref:necessaryConditionOnLine:exa}
        In the special case when $I^{\sat}$ is a degree $d$ subscheme of a
        line, we have $a=d-2$. In this case $H_{\Sk/I^{\sat}}(d-2) = d-1$,
        $H_{\Sk/I^{\sat}}(d-1) = d$ and $\ObFib = 0$ if and only if the
        ideals $I^{\sat}$ and $I+(I^{\sat})^2$ agree in degree $d-2$. Assuming additionally that $H_{\Sk/I}$ is
        nondecreasing, we get $\dim_{\kappa} (I^{\sat}/I)_{d-2} \leq 1$, hence $\ObFib \neq
        0$ only if $(I^{\sat})^2_{d-2} \subseteq I_{d-2}\subsetneq
        (I^{\sat})_{d-2}$.
    \end{example}

    \begin{example}[Line and some general points]\label{ex:lineAndPoints}
        Generalizing Example~\ref{ref:necessaryConditionOnLine:exa}, consider
        the case where $\Sk/I^{\sat}$ has Hilbert function $(1,e+2, e+3, e+4,
        \ldots , e + d-1, e+ d, e+d,
        \ldots )$ and there exists an $J\supseteq I$ with $\Sk/J^{\sat}$ having Hilbert
        function $(1,2,3,4, \ldots , d-1, d, d,  \ldots , \ldots )$.
        A choice of an element $\ell$ transversal both for $I$ and $J$ yields
        a surjective map $\cokern{I}\to \cokern{J}$. The spaces $\cokern{I}$,
        $\cokern{J}$ have the same
        Hilbert functions in positive degrees, so this map is bijective in positive
        degrees. The quotient $\Sk/J^{\sat}$ is Gorenstein, so we have that
        \[
            \ObFib(I)  \simeq \Hom_{\Sk}\left( \frac{I^{\sat}}{I},
            \cokern{I}_{\geq 1} \right)_{0}  \simeq \Hom_{\Sk}\left(
            \frac{I^{\sat}}{I}, \cokern{J}_{\geq 1}
            \right)_0  \simeq \Hom_{\kappa}\left(  \left(\frac{I^{\sat}}{I + I^{\sat}\cdot
            J^{\sat}}\right)_{d-2}, \kappa \right),
        \]
        which is nonzero if and only if $I^{\sat}$ and $I + I^{\sat}\cdot
        J^{\sat}$
        differ in degree $d-2$.
    \end{example}

    \subsection{Three points}\label{ssec:threePoints}
    Consider the example $I := I(\Gamma_0)'$ from the introduction. In this
    example $\ObFib$ is
    one-dimensional.

    We claim that all points from $\Hilb_I$ are saturable. The restriction of natural map
    $\Hilb_I \to \Hilb^3(\mathbb{P}^2)$ to the saturable locus is dominant and projective, hence
    onto $\Hilb^3(\mathbb{P}^2)$. If $[I']\in \Hilb_I$ is a nonsaturated ideal then $\Sk/(I')^{\sat}$ has Hilbert function $(1,2,3,3, \ldots)$.
    Therefore, $[I']$ is the unique point of $\Hilb_{I}$ whose saturation is
    $(I')^{\sat}$.
    It follows that $I'$ is saturable.
    This claim follows also from \cite[Thm.~1.3]{Mandziuk}.

    \subsection{Four points}\label{ssec:four}

    Suppose that $I$ is a nonsaturated homogeneous ideal in a standard graded polynomial
    ring $\Sk = \kappa[\alpha_0, \ldots ,\alpha_3]$ such that the Hilbert function
    of $\Sk/I$ is $(1,4,4,4, \ldots )$, where \emph{$\kappa$ has characteristic
    zero}. The Hilbert function of $\Sk/I^{\sat}$
    is $(1,3,4,4,\ldots)$ or $H=(1,2,3,4,4, \ldots)$.
    In the first case, $[I]$ is the unique point in $\Hilb_I$
    whose saturation is equal to $I^{\sat}$. Therefore, we get that $I$ is
    saturable arguing as in~\S\ref{ssec:threePoints}. We assume that $\Sk/I^{\sat}$ has Hilbert function~$H$. 
    By Theorem~\ref{ref:saturationDeformsWhenObFibVanishes:thm} and
    Example~\ref{ref:necessaryConditionOnLine:exa} a necessary condition for $I$ to be
    saturable is that $(I^{\sat})^2_2 \subseteq I$. We claim that it is also a sufficient
    condition.

    Let $Z$ in $\Hilb^4(\mathbb{P}^3)$ consist of subschemes with Hilbert
    function $H$. This is a closed, irreducible and $8$-dimensional locus
    parameterized by a line in $\mathbb{P}^3$ and a quartic on it.
    Let $Y$ in $\Hilb_I$ be the preimage of $Z$. The fiber of $Y\to Z$ over any $[R]$
    is $\Gr(6, I(R)_2) \cong \mathbb{P}^6$, so $Y$ is irreducible and
    $14$-dimensional.
    Let $Y'\subseteq Y$ be the locus consisting of $[I']$
    such that $((I')^{\sat})^2_2\subseteq I'$. The fiber of $Y'\to Z$
    over any $[R]$ is isomorphic with
	$\Gr(3, I(R)_2/I(R)^2_2) \cong \mathbb{P}^3$, so $Y'$
    is irreducible and $11$-dimensional.

    Let $X = \Slip^{(1,4,4, \ldots )}$ be the locus of saturable ideals in $\Hilb_I$. It is irreducible
    and $12$-dimensional.
    By
    Example~\ref{ref:necessaryConditionOnLine:exa} and
    Theorem~\ref{ref:saturationDeformsWhenObFibVanishes:thm}, all saturable
    ideals from $Y$ lie in $Y'$, so $X \cap Y \subseteq Y'$. We now prove that
    this containment is an equality.
	Take any line and four points $\Gamma$ on it. Converging four general points
        to $\Gamma$ yields an ideal $[I']\in X\cap Y$ with $V(I'^{\sat}) =
        \Gamma$; in this way we obtain more than one $I'$ for given $\Gamma$. The
        scheme $V(I'^{\sat})$ is a complete intersection, hence
        unobstructed. Applying Corollary~\ref{ref:exceptionalIsDivisor:cor} we
        obtain a divisor $[I']\in E \subseteq X$, so $\dim E = 11$. Since
    $E$ gets contracted in $\Hilb^4(\mathbb{P}^3)$, the saturation of a point of $E$ cannot
    have Hilbert function $(1,3,4, \ldots )$, thus is has
    Hilbert function $H$, so $E \subseteq X \cap
    Y\subseteq Y'$. But $Y'$ is irreducible and $\dim Y' = 11 = \dim
    E$, so $Y' = E \subseteq X$.

    We have thus proven the following.
    \begin{proposition}
        Let $I\subseteq
        \kappa[\alpha_0,  \ldots , \alpha_3]$ be a nonsaturated ideal such that $H_{S/I} =
        (1,4,4,4, \ldots )$. Then $I$ is a limit of saturated ideals if and
        only if $\ObFib(I, I^{\sat})\neq 0$ if and only if $(I^{\sat})^2_2
        \subseteq I$.
    \end{proposition}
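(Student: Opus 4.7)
The plan is to case-split on the Hilbert function $H_{S/I^{\sat}}$, which admissibility and the nonsaturatedness of $I$ force to be either $(1,3,4,4,\ldots)$ or $H := (1,2,3,4,4,\ldots)$. In the first case (four coplanar, non-collinear points), a dimension count gives $\dim I^{\sat}_2 = 6 = \dim I_2$, forcing $I_2 = I^{\sat}_2$; hence $(I^{\sat})^2_2 \subseteq I$ holds automatically. Moreover $I$ is determined by its saturation (we must have $I_1 = 0$, $I_e = I^{\sat}_e$ for $e \geq 2$), so $[I]$ is the unique point of $\Hilb_I$ whose saturation has this Hilbert function, and the argument of~\S\ref{ssec:threePoints} applies: specializing four general points of $\mathbb{P}^3$ to $V(I^{\sat})$ gives a family of saturated ideals in $\Hilb_I$ whose limit must be $I$. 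Nonvanishing of $\ObFib(I, I^{\sat})$ then follows by contraposition from Theorem~\ref{ref:saturationDeformsWhenObFibVanishes:thm}.

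Next I would handle the equivalence $\ObFib(I, I^{\sat}) \neq 0 \iff (I^{\sat})^2_2 \subseteq I$ in the second case. Here we are in the setup of Example~\ref{ref:necessaryConditionOnLine:exa} with $d = 4$: nondecreasing $H_{S/I}$ forces $\dim_\kk(I^{\sat}/I)_2 = 1$, and comparing $\dim I^{\sat}_2 = 7$, $\dim I_2 = 6$, $\dim (I^{\sat})^2_2 = 3$ shows $I_2 + (I^{\sat})^2_2$ equals either $I_2$ or $I^{\sat}_2$ according as $(I^{\sat})^2_2 \subseteq I_2$ or not. Necessity of $(I^{\sat})^2_2 \subseteq I$ for saturability is then immediate from Theorem~\ref{ref:saturationDeformsWhenObFibVanishes:thm}.

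The hard part will be sufficiency in the second case, which I would prove by a dimension count inside $\Hilb_I$. Let $Z \subseteq \Hilb^4(\mathbb{P}^3)$ be the irreducible $8$-dimensional locus of four-point schemes with Hilbert function $H$, let $Y \subseteq \Hilb_I$ be its preimage (fibred in $\Gr(6,I(R)_2) \simeq \mathbb{P}^6$, so irreducible of dimension $14$), let $Y' \subseteq Y$ be cut out by $((I')^{\sat})^2_2 \subseteq I'$ (fibred in $\Gr(3, I(R)_2/I(R)^2_2) \simeq \mathbb{P}^3$, so irreducible of dimension $11$), and let $X = \Slip^{(1,4,4,\ldots)}$ be the saturable locus ($12$-dimensional and irreducible). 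Necessity gives $X \cap Y \subseteq Y'$, so it suffices to exhibit $Y' \subseteq X$. To do so I would specialize four general points onto a fixed line to obtain a point $[I'] \in X \cap Y$ with $V((I')^{\sat})$ a complete intersection, hence unobstructed, and with positive-dimensional fibre of $\Slip \to \Hilb^4(\mathbb{P}^3)$ over $V((I')^{\sat})$ (since many choices of limit yield different $I'$ with the same saturation). Corollary~\ref{ref:exceptionalIsDivisor:cor} then yields an exceptional divisor $E \subseteq X$ through $[I']$ whose image in $\Hilb^4(\mathbb{P}^3)$ has codimension $\geq 2$; that image cannot contain schemes with Hilbert function $(1,3,4,\ldots)$ (which form an $11$-dimensional locus), so $E \subseteq Y$, and by necessity $E \subseteq Y'$. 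Irreducibility of $Y'$ and $\dim E = 11 = \dim Y'$ give $Y' = E \subseteq X$, closing the argument.
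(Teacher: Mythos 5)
Your proposal is correct and follows essentially the same route as the paper: the same case split on $H_{S/I^{\sat}}$, the same uniqueness argument for the $(1,3,4,\ldots)$ case, and the same dimension count with $Z$, $Y$, $Y'$, $X = \Slip^{(1,4,4,\ldots)}$ and the exceptional divisor from Corollary~\ref{ref:exceptionalIsDivisor:cor} for the $(1,2,3,4,\ldots)$ case. You simply make explicit a few steps the paper leaves implicit, such as the dimension count $\dim I_2 = \dim I^{\sat}_2 = 6$ forcing uniqueness in the first case, and the verification that $I_2 + (I^{\sat})^2_2$ is either $I_2$ or $I^{\sat}_2$.
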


    \subsection{Five points}\label{ssec:five}
    Suppose that $I$ is a (not necessarily saturated) homogeneous ideal in a standard graded polynomial
    ring $\Sk = \kappa[\alpha_0, \ldots ,\alpha_4]$ such that the Hilbert function
    of $\Sk/I$ is $(1,5,5,5, \ldots )$. \emph{We assume that $\kappa$ has
    characteristic zero.} There are a few possible Hilbert
    functions of $\Sk/I^{\sat}$. We divide them according to the number of
    quadrics.

    \subsubsection{Case $H_{\Sk/I^{\sat}}(2) = 5$.} In this case $[I]$ is the unique point
    of the fiber of the natural map $\Hilb_I\to \Hilb^5(\mathbb{P}^4)$ defined
    in~\S\ref{ssec:setup}, hence
    is saturable.

    \subsubsection{Case $H_{\Sk/I^{\sat}}(2) = 4$.}\label{ssec:fivePts4Quadrics} Using Macaulay's Growth
    Theorem as in~\cite[Lemma~2.9]{cjn13} we conclude that $H_{\Sk/I^{\sat}} =
    (1, 3, 4, 5, 5, \ldots )$.
    Geometrically, the scheme $V(I^{\sat}_{\leq 3})\subseteq \mathbb{P}^4$ is a line
    and a (possibly embedded) point. Let $J\supseteq I^{\sat}_{\leq 3}$ define this
    line. The locus $\mathcal{L}\subseteq \Hilb^5(\mathbb{P}^4)$ of
    possible $V(I^{\sat})$ is parameterized by the choice of the
    point in $\mathbb{P}^4$, the line and four points on it, hence it is
    irreducible and its dimension is $4+6+4=14$. Every $V(I^{\sat})$ in this
    locus is contained in $\mathbb{P}^2$, hence unobstructed.

    The ideal $I$ contains no elements of degree less than two, so the $i$-th
    element $F_i$ of its minimal free resolution $ \ldots \to F_1\to F_0\to I\to 0$ has generators of degree at least
    $2+i$. In particular, every generator of $F_1$ has degree at least three.
    Since $H_{I^{\sat}/I} = (0,2,1,0, \ldots )$ this shows that
    $\Ext^1_{\Sk}(I, I^{\sat}/I)_0 = 0$ and so $\psi_0$ is surjective.

    From~Example~\ref{ex:lineAndPoints}, we see that $\dim_{\kappa} \ObFib(I, I^{\sat})$
    is zero when $(I^{\sat})_1\cdot J_1\not\subset I_2$.
    By Theorem~\ref{ref:saturationDeformsWhenObFibVanishes:thm}, the ideal $I$ is saturable only if
    $(I^{\sat})_1\cdot J_1\subseteq I_2$.
    Let $\mathcal{U}'$ be the preimage of $\mathcal{L}$ in $\Hilb_{I\subseteq I^{\sat}}$.
    For every $[I\subseteq I^{\sat}]\in \mathcal{U}'$ the space
    $I^{\sat}_1\cdot J_1$ is $5$-dimensional and it varies continuously with
    $I\subseteq I^{\sat}$, hence we obtain a closed subvariety $\mathcal{V}
    \subseteq \mathcal{U}'$ whose fiber over $[I^{\sat}]$ is
    given by $[I]$ with $I^{\sat}_1\cdot J_1\subseteq I_2$; this fiber is
    isomorphic to $\Gr(5, I^{\sat}_2/I^{\sat}_1\cdot J_1)$, hence is
    $5$-dimensional.
    By~\cite[11.4.C, 25.2.E]{Vakil_foag} the variety $\mathcal{V}$ is smooth,
    irreducible and $19$-dimensional, hence also
        $\pr_I(\mathcal{V})$ is irreducible and $19$-dimensional.

    The locus $\mathcal{V}$ contains a point $[I'\subseteq (I')^{\sat}]$ with $I'$ saturable: when
    a general $5$-tuple of points $\Gamma_t$ deforms to $\Gamma$ a $4$-tuple on a line and fifth
    point is ``static'' outside it, the limit $I'$ of $I(\Gamma_t)$ is one such
    ideal; for given $\Gamma$ one obtains more than $I'$
        (compare~\S\ref{ssec:four}).
    Let $X = \Slip^{(1,5,5, \ldots)}$. By Corollary~\ref{ref:exceptionalIsDivisor:cor} there is a
    $19$-dimensional family $[I']\in E \subseteq X$,
    which gets contracted in $\Hilb^5(\mathbb{P}^4)$. The saturation of a
    general element $[I'']\in E$ satisfies $H_{\Sk/I''}(2) \geq 4$ by
    semicontinuity and $H_{\Sk/I''}(2) \neq 5$ since $E$ gets contracted.
    Therefore, $E \subseteq \pr_I(\mathcal{U'})\cap X \subseteq
    \pr_I(\mathcal{V})$. Since $\pr_I(\mathcal{V})$ is irreducible and
    $19$-dimensional, we conclude that $E = \pr_I(\mathcal{V})$.

    \subsubsection{Case $H_{\Sk/I^{\sat}}(2) = 3$.} Using Macaulay's Growth
    Theorem as in~\cite[Lemma~2.9]{cjn13} we conclude that $H_{\Sk/I^{\sat}} =
    (1, 2, 3, 4, 5, 5, \ldots )$. When $(I^{\sat})^2_3\not\subseteq I$, we
    obtain by Example~\ref{ref:necessaryConditionOnLine:exa} that $\ObFib = 0$, hence $I$ is entirely nonsaturable.

    The space of possible $I^{\sat}$ has dimension $\dim\Gr(3, (\Sk)_1) + \dim
    \mathbb{P}(\kappa[\alpha_3, \alpha_4]_5) = 11$.
    Constructing $I$ from given $I^{\sat}$ is a bit more tricky in the current
    subcase. Each such ideal requires a choice of a $10$-dimensional space of
    quadrics inside a $12$-dimensional space $(I^{\sat})_2$, but a general
    choice $\mathcal{Q}$ of such quadrics will yield $H_{\Sk/(\mathcal{Q})}(3)
    <5$, so does not give rise to $I$.
    Instead of dealing with ideals directly, we employ Macaulay's inverse
    systems~\S\ref{ssec:macaulaysInverseSystems} in their partial
    differentiation flavour.

    Up to coordinate change, we have $I^{\sat}_1 = (\alpha_0, \alpha_1,
    \alpha_2)$, so in terms of inverse systems, for every $e\leq 4$ the space
    $(I^{\sat})^{\perp}_e$ is $\kappa[x_3, x_4]_e$.  To construct $I$, we need
    to add a one-dimensional space of cubics $\spann{c}$ and a 2-dimensional
    space of quadrics $\spann{q_1, q_2}$, taking into account that the
    derivatives of the cubic lie in the $5$-dimensional space $\spann{x_3^2,
    x_3x_4, x_4^2, q_1, q_2}$.

    We construct two families. First, the \emph{cubic only} family is obtained
    by taking a cubic $c$ whose space of partial derivatives $(\Sk)_1\circ c$
    is two-dimensional modulo $\kappa[x_3, x_4]_2$. The condition $(I^{\sat})^2_3
    \subseteq I$ is equivalent to $(\alpha_0, \alpha_1, \alpha_2)^2\circ c = 0$,
    which in turn implies that
    \[
        c = x_0q_0 + x_1q_1 + x_2q_2 + c_0,
    \]
    where $q_0, q_1, q_2\in \kappa[x_3, x_4]_2$ and $c_0\in \kappa[x_3, x_4]_3$.
    The choice of $c_0$ does not affect the ideal $I$, hence we have a $(3\cdot
    3 - 1)$-dimensional choice of $c$. Altogether, we obtain a
    $19$-dimensional irreducible family.
    We now prove that it lies in $\Slip_I$. It is enough to prove this for a
    general point of this family. Passing to the algebraic closure if
    necessary, we assume that
    $V(I^{\sat}) = \{[\mu_1], \ldots ,[\mu_5]\} \subseteq \mathbb{P}\spann{x_3,
    x_4}$, where $[\mu_i]\neq [x_3]$ for $i=1,2 \ldots ,5$.
    Since $H_{\Sk/I^{\sat}}(2) = H_{\kappa[x_3,x_4]}(2)$, we have $\spann{\mu_1^2, \ldots
    ,\mu_5^2} = \kappa[x_3,
    x_4]_2$. Rescaling $\mu_i$ if necessary, we assume $\sum_{j=1}^5 \mu_j^3 =
    0$. Fix $c_1, c_2, c_3\in \kappa^5$ such that $\sum_{j=1}^5 (c_{i})_j
    \mu_j^2 = q_i$ for $i=0,1,2$.
	For $j=1,\ldots, 5$ let $p_j = \mu_j + t\sum_{i=0}^2 (c_i)_j x_i$ and consider $
	\Gamma_t = \left\{ p_j \ |\ j=1, \ldots ,5 \right\}\subseteq\mathbb{P}^4.$
	By Example~\ref{ex:points}, for every $e > 0$ and nonzero $t$ we have
	$I(\Gamma_t)^{\perp}_e = \spann{p_j^e\ |\ j=1,2, \ldots ,5}$.
    We now use Proposition~\ref{ref:HilbertFunctionFromLimits:prop} to show that for every $e>0$ and general 
    $\lambda$, the vector space $I(\Gamma_\lambda)^{\perp}_e$ is five-dimensional and that the limit of those
    spaces is $I^\perp_e$.
	In the notation of that Proposition, for every $e>0$ and $i\in \{1,\ldots, 5\}$ we take $F_i = p_i^e$.
	If $e>3$, then $\mu_1^e, \ldots, \mu_5^e$ are linearly independent so we
    may take $G_i=F_i$ for all $i$ and conclude using that Proposition.
	Let
	\[
	H = \frac{1}{3t}\sum_{j=1}^5\left( \mu_j + t\sum_{i=0}^2 (c_i)_j x_i\right)^3.
	\]
	Observe that for every homogeneous partial
	derivative operator $D$ of order $k\leq 3$  we have
	\begin{equation}\label{eq:5points_in_the_span}
        DH \in \frac{1}{t}\spann{(p_1)^{3-k}, \ldots, (p_5)^{3-k}}\kappa[t] 
	\end{equation}
	and
	\begin{equation}\label{eq:derivatives_agree}
		\lim_{t\to 0 } (DH) = D(x_0 q_0 + x_1 q_1 + x_2 q_2) \equiv
		Dc \mod \kappa[x_3, x_4]_{3-k},
	\end{equation}
    if $D(x_0 q_0 + x_1 q_1 + x_2 q_2)\neq 0$.
	Assume that $e\in \{1,2,3\}$. Let $G_1, \ldots, G_{4-e}$ be $D_1 H, \ldots, D_{4-e} H$, where
	$D_i$ are homogeneous partial derivative operators in $x_3$ and $x_4$ of order $3-e$ such that 
	$D_1 (x_0 q_0 + x_1 q_1 + x_2 q_2), \ldots, D_{4-e} (x_0 q_0 + x_1 q_1 + x_2 q_2)$ are linearly independent.
	Let $G_{5-e}, \ldots, G_5$ be some of $F_1, \ldots, F_5$ such that the 
	corresponding $\mu_j^e$ are linearly independent.
	The above claims about $I(\Gamma_t)^\perp_e$ follow from Proposition~\ref{ref:HilbertFunctionFromLimits:prop},
	\eqref{eq:5points_in_the_span} and \eqref{eq:derivatives_agree}.
	We proved that for general $\lambda$ the Hilbert function of
    $\Sk/I(\Gamma_{\lambda})$ is $(1,5,5, \ldots )$ and the limit of
    $[I(\Gamma_{t})]\in\Hilb^{(1,5,5 \ldots )}$ with $t\to 0$ is the ideal
    $I$, so $[I]\in \Slip^{(1,5,5 \ldots )}$.

    Next, the \emph{cubic and quadric} family is obtained by taking the cubic
    $c$ to be a cube of a linear form and all possible limits of $c$.
    The choice of $c$ is $4$-dimensional. The only nontrivial possible limit
    is $\ell_1\cdot (\ell_2)^2$, where $\ell_1$ is a linear form and $\ell_2\in
    \spann{x_3, x_4}$. We call such cubics \emph{impure}.
    As for the choice of quadrics, we fix the quadric $q_1$ as any basis element of the
    one-dimensional space $(\Sk)_1\circ c$. The second quadric can be chosen
    arbitrarily modulo $\spann{x_3^2, x_3x_4, x_4^2, q_1}$, hence yields a
    $10$-dimensional choice.
    It follows that the whole family has dimension $11+10+4 = 25$.
    In particular, its general element does not lie in $\Slip_I$.
    Moreover, from the condition $(I^{\sat})^2_3\subseteq I$ we see that the
    only elements that are possibly in $\Slip_I$ have impure
    $c$. Hence we assume that $c=x_0x_3^2$.

    We consider the subcase when $q_2(x_0, x_1, x_2, 0, 0) = \nu x_0^2$
    for some $\nu\in \kappa$, hence
    $q_2 = \nu x_0^2 + x_3\ell_1 + x_4\ell_2$ for some $\ell_1, \ell_2\in \spann{x_0,
    \ldots ,x_4}$.
    In this case we claim that the ideal $I$ is in $\Slip_I$ for every choice
    of $\nu$ and $q_2$ and the quintic in $I^{\sat}$. Since $\Slip_I$ is
    closed, it is enough to
    prove this for a general choice and over an algebraically closed field
    $\kappa$, thus we assume that the quintic is completely decomposable and
    $V(I^{\sat}) = \{[\mu_1], \ldots ,[\mu_5]\} \subseteq \mathbb{P}\spann{x_3,
    x_4}$, where $[\mu_i]\neq [x_3]$ for $i=1,2 \ldots ,5$.
    The five cubes $\mu_1^3$, \ldots ,$\mu_5^3$ are linearly dependent as they
    lie in a four-dimensional space $\kappa[x_3, x_4]_3$. Rescaling the forms $\mu_i$, we assume
    $\sum_{i=1}^5 \mu_i^3 = 0$. Moreover, since $H_{\Sk/I^{\sat}}(2) = 3$, the squares $\mu_1^2$, \ldots ,$\mu_5^2$
    span $\kappa[x_3, x_4]_2$.
    Take $c_1, c_2, c_3, c_4\in \kappa^5$ such that
    \[
        \sum_{i=1}^5 c_{1i}\mu_i^2 = x_3^2,\qquad \sum_{i=1}^5 \left(
        \frac{\partial\mu_i}{\partial x_4}
        \right)c_{1i}^2 \neq 0
	\]
	and
	\[
		\sum_{i=1}^5 \mu_i \left(
        \frac{\partial \mu_i}{\partial x_4} \right)(c_{2i}x_0 + c_{3i}x_1 + c_{4i}x_2) \equiv
        \frac{\sum_{i=1}^5 \left(\frac{\partial \mu_i}{\partial x_4}\right) c_{1i}^2}{2\nu}(\ell_1x_3+\ell_2x_4) \mod 			\kappa[x_3, x_4]_{2}.
    \]
    To prove the existence of $c_1$, it is enough to prove it on a single
    example, for example with $[\mu_{\bullet}] = ([x_4 - x_3], [x_4+x_3],
    [x_4-2x_3], [x_4 + 2x_3], [x_4])$ and $c_1 = \frac{1}{8}(0, 0, 1, 1, -2)$.
	For $i=1,\ldots, 5$ let $p_i= \mu_i + tc_{1i}x_0 + t^2(c_{2i}x_0 + c_{3i}x_1 + c_{4i}x_2)$ 
	and consider the family $\Gamma_t =  \{p_i\ |\ i=1,2, \ldots ,5\}\subseteq \mathbb{P}^4$.
   	We show that $\Sk/I(\Gamma_\lambda)$ has Hilbert function $(1,5,5,\ldots)$ for general $\lambda$ and 
   	the limit of ideals $I(\Gamma_t)$ taken degree-by-degree (hence, in the
    multigraded Hilbert scheme) is $I$. As above, we use 
    Proposition~\ref{ref:HilbertFunctionFromLimits:prop} and for each degree 
    $e>0$ we take $F_i = p_i^e$ for each $i=1,\ldots, 5$. 
	For $e>3$ we may take $G_i=F_i$ for every $i$ and conclude using that Proposition. Let
	\[
	H = \frac{1}{3t} \sum_{i=1}^5 \left(\mu_i + tc_{1i}x_0 + t^2(c_{2i}x_0 + c_{3i}x_1 + c_{4i}x_2)\right)^3.
	\]
	For $e\in \{1,2,3\}$ we take $G_1,\ldots, G_{e+1}$ to be any $e+1$ of $F_1,\ldots, F_5$ such that the 
	corresponding $\mu_i^e$ are linearly independent.
	For $e=3$ we take $G_5 = H$ and conclude using Proposition~\ref{ref:HilbertFunctionFromLimits:prop} since 
	\[
	\lim_{t\to 0} H = \sum_{i=1}^5 \mu_i^2c_{1i}x_0 = x_0x_3^2.
	\]
	Before proceeding with cases $e=1,2$ observe that
	\[
	\lim_{t \to 0} \frac{1}{2t} \frac{\partial H}{\partial x_4} \equiv
	\left(\frac{1}{2}\sum_{i=1}^5 \frac{\partial \mu_i}{\partial x_4} c_{1i}^2 \right)
	\cdot \left(x_0^2 + \frac{\ell_1x_3}{\nu} + \frac{\ell_2 x_4}{\nu}\right) \mod \kappa[x_3,x_4]_2.
	\]	
	By assumption, the first factor is nonzero. Replacing $H$ by $H$ divided by that 
	scalar and multiplied by $\nu$ we get
	\[
	\lim_{t \to 0} \frac{1}{2t} \frac{\partial H}{\partial x_4} \equiv q_2 \mod \kappa[x_3,x_4]_2.
	\]
	Therefore, for $e=2$ we may take $G_4 = \frac{\partial H}{\partial x_3}$ and
	 $G_5 = \frac{1}{2t} \frac{\partial H}{\partial x_4}$.
	Finally, for $e=1$ we take 
	$G_3 = \frac{1}{2t} \frac{\partial^2 H}{\partial x_4^2}$, $G_4 =
	\frac{1}{2t} \frac{\partial^2 H}{\partial x_4 \partial x_3}$ and
	 $G_5 = \frac{1}{2t} \frac{\partial^2 H}{\partial x_4 \partial x_0}$.
    This finishes the proof of our claims about $I(\Gamma_t)$ and implies
     that $[I] \in \Slip^{(1,5,5,\ldots)}$.

	Finally, we consider the subcase when $q_2(x_0, x_1, x_2, 0, 0)$ is not a
	multiple of $x_0^2$. Consider the natural map 
	$\pi\colon \Hilb_I \to \Hilb_{I+(\Sk)_{\geq 4}}$.
	The images of $\Slip_I$ and the cubic and quadric family are both $20$-dimensional.
	The tangent space at the image of any ideal corresponding to 
	$(q_2,c) = (x_0x_1, x_0x_3^2)$ is $20$-dimensional, so no such ideal is in $\Slip_I$.
	If the quadric $q_2(x_0, x_1, x_2, 0, 0)$ has rank at least
	two, then $q_2$ degenerates to $x_0x_1$ so the corresponding ideal is not in $\Slip_I$.
	We are left with the rank one case, so $q_2(x_0, x_1, x_2, 0, 0) = x_1^2$.
	It is enough to show that no ideal with $(q_2, c) = (x_1^2, x_0x_3^2)$ is in $\Slip_I$.
	Let $K$ be the image under $\pi$ of any such ideal. A computation using the \emph{VersalDeformations}
    package~\cite{Ilten_Versal_Deformations} verifies that $[K]$ lies on the intersection of two irreducible components: 
    $20$-dimensional and $19$-dimensional. It follows that $[K]\notin \pi(\Slip_I)$.	

    Summing up the five points case, we have the following:
    \begin{proposition}
        Let $I\subseteq
        \kappa[\alpha_0,  \ldots , \alpha_4]$ be an ideal such that $H_{\Sk/I} =
        (1,5,5,5, \ldots )$.
        \begin{enumerate}
            \item Suppose $H_{\Sk/I^{\sat}}(2) = 5$. Then $I$ is a limit of
                saturated ideals.
            \item Suppose $H_{\Sk/I^{\sat}}(2) = 4$.
                Then $I$ is a limit of saturated ideals if and
                only if $\ObFib(I, I^{\sat})\neq 0$ if and only if
                $(I^{\sat})_1\cdot J_1
                \subseteq I$, where $J$ was defined
                in~\ref{ssec:fivePts4Quadrics}.
            \item Suppose $H_{\Sk/I^{\sat}}(2) = 3$. Assume that $\kappa$ has characteristic zero.
                Then $I$ is a limit of saturated ideals if and only if both
                $(I^{\sat})^2_3 \subseteq I$ and $(I:(\Sk)_+^2)_1 \cdot
                I_1^{\sat} \subseteq I$.
        \end{enumerate}
    \end{proposition}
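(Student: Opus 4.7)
The plan is to split on $H_{S/I^{\sat}}(2) \in \{3, 4, 5\}$ and assemble the detailed work of \S\ref{ssec:five}.

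Case (1) is immediate: the fiber of $\Hilb_I \to \Hilb^5(\mathbb{P}^4)$ over $V(I^{\sat})$ consists of $[I]$ alone, while $\Slip_I$ dominates this fiber projectively via the saturated locus. For Case (2), the reformulation $\ObFib(I, I^{\sat}) \neq 0 \Leftrightarrow (I^{\sat})_1 \cdot J_1 \subseteq I$ is Example~\ref{ex:lineAndPoints}; necessity ``$[I] \in \Slip_I \Rightarrow \ObFib \neq 0$'' is Theorem~\ref{ref:saturationDeformsWhenObFibVanishes:thm}; sufficiency uses the irreducible $19$-dimensional smooth family $\mathcal{V}$ from \S\ref{ssec:fivePts4Quadrics}, the observation that it contains a clear saturable limit arising from a degeneration of $5$ general points to $4$ on a line plus a free one, and Corollary~\ref{ref:exceptionalIsDivisor:cor} to produce a $19$-dimensional divisor $E = \pr_I(\mathcal{V}) \subseteq \Slip_I$.

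For Case (3), necessity of $(I^{\sat})^2_3 \subseteq I$ combines Example~\ref{ref:necessaryConditionOnLine:exa} with Theorem~\ref{ref:saturationDeformsWhenObFibVanishes:thm}. To handle the second condition I would compute $(I:S_+^2)_1$ using Macaulay inverse systems as the annihilator in $S_1$ of $S_2 \circ I^{\perp}_3 \subseteq S^*_1$. In the cubic-only subfamily one checks that $I^{\perp}_2 = S_1 \circ I^{\perp}_3$, so for every $\alpha_j \in I^{\sat}_1$ one has $\alpha_j \circ I^{\perp}_2 \subseteq S_2 \circ I^{\perp}_3$; this makes $(I:S_+^2)_1 \cdot I^{\sat}_1 \subseteq I$ automatic, and the explicit degeneration $p_j = \mu_j + t \sum_i (c_i)_j x_i$ from the excerpt provides saturability. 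In the cubic-and-quadric subfamily the condition $(I^{\sat})^2_3 \subseteq I$ forces impure $c$, and taking $c = x_0 x_3^2$ one computes $S_2 \circ I^{\perp}_3 = \spann{x_0, x_3, x_4}$, hence $(I:S_+^2)_1 = \spann{\alpha_1, \alpha_2}$; the condition $(I:S_+^2)_1 \cdot I^{\sat}_1 \subseteq I_2$ then unwinds via the contraction action to the vanishing of the coefficients of $x_0 x_1, x_0 x_2, x_1^2, x_1 x_2, x_2^2$ in $q_2$, i.e.\ to $q_2(x_0, x_1, x_2, 0, 0) \in \spann{x_0^2}$, which is the rank-one condition of the excerpt. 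Necessity of this is the \emph{VersalDeformations} component computation, and sufficiency is the two-scale explicit degeneration.

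The main obstacle is precisely this unification in Case (3): verifying that the single ideal-theoretic condition $(I:S_+^2)_1 \cdot I^{\sat}_1 \subseteq I$ is automatic on cubic-only ideals (because $I^{\perp}$ is generated in degree $\geq 3$ there, so every test factors through $S_2 \circ I^{\perp}_3$) and captures precisely the rank-one restriction on $q_2$ in cubic-and-quadric ideals (because $q_2$ is a genuine degree-$2$ generator of $I^{\perp}$ lying outside $S_1 \circ I^{\perp}_3$).
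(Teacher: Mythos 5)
Your proposal is correct and follows the same route as the paper: the proposition is a summary of the case analysis carried out in \S\ref{ssec:five}, and you assemble the ingredients in the same way (the fiber-uniqueness argument for $H_{S/I^{\sat}}(2)=5$; Example~\ref{ex:lineAndPoints}, Theorem~\ref{ref:saturationDeformsWhenObFibVanishes:thm}, the $19$-dimensional family $\mathcal{V}$ and Corollary~\ref{ref:exceptionalIsDivisor:cor} for $H_{S/I^{\sat}}(2)=4$; and the cubic-only/cubic-and-quadric split, the two explicit degenerations, and the \emph{VersalDeformations} component computation for $H_{S/I^{\sat}}(2)=3$). The genuine value you add — which the paper leaves implicit — is the explicit check that the ideal-theoretic condition $(I:S_+^2)_1\cdot I^{\sat}_1\subseteq I$ unwinds correctly: that $(I:S_+^2)_1$ is the annihilator of $S_2\hook I^{\perp}_3$ in $S_1$, that it makes the condition automatic on the cubic-only family because $I^{\perp}_2 = S_1\hook I^{\perp}_3$ there, and that with $c = x_0x_3^2$ it reduces exactly to the rank-one restriction $q_2(x_0,x_1,x_2,0,0)\in\spann{x_0^2}$. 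Two small imprecisions worth flagging: in case (3), necessity of the second condition is not handled by a single \emph{VersalDeformations} computation — the paper first uses a tangent-space count at $(q_2,c)=(x_0x_1, x_0x_3^2)$ and a degeneration of any rank-$\geq 2$ quadric to $x_0x_1$, reserving the component computation only for the residual rank-one case $q_2(x_0,x_1,x_2,0,0)=x_1^2$; and in case (1) the phrase ``dominates this fiber projectively via the saturated locus'' garbles the argument, which is that $\Slip_I\to\Hilb^5(\mathbb{P}^4)$ is surjective (dominant plus projective) and the fiber over $V(I^{\sat})$ is the single point $[I]$.
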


\subsection{A partially saturable ideal with nonzero fiber obstruction
group}\label{ssec:nonvanishingGroup}

    Let $\Sk = \kappa[\alpha_0, \alpha_1, \alpha_2]$ with standard grading and
    consider the degree $9$ ideal
    \[
        I = (\alpha_0^2\alpha_2,\, \alpha_0\alpha_1^3,\,
        \alpha_0^2\alpha_1^2,\, \alpha_0^3\alpha_1,\, \alpha_0^5,
        \alpha_1^6).
    \]
    Its Hilbert function is $(1,3,6,9, \ldots )$ and it has $I^{\sat} =
    (\alpha_{0}^{2},\,\alpha_{0}\alpha_{1}^{3},\,\alpha_{1}^{6})$, whose Hilbert function is
    $(1,3,5,7,8,9, \ldots )$.
    In this example we check that the map $\Hom_{\Sk}(I^{\sat}, \Sk/I^{\sat})_0\to
    \Hom_{\Sk}(I, \Sk/I^{\sat})_0$ has one-dimensional cokernel and the same holds for $\TgFlag\to
    \Hom_{\Sk}(I, S/I)_0$. Choosing a complementary tangent vector we discover the
    deformation
    \[
        I_t = (t\alpha_0\alpha_1^2 + \alpha_0^2\alpha_2,\, \alpha_0\alpha_1^3,\,
        \alpha_0^2\alpha_1^2,\, \alpha_0^3\alpha_1,\, \alpha_0^5, \alpha_1^6).
    \]
    The saturation of the ideal $I_1$ is $(\alpha_0\alpha_1^2 +
    \alpha_0^2\alpha_2, \alpha_0^2\alpha_1, \alpha_0^3, \alpha_1^6)$ which has
    Hilbert series $(1,3,6,7,8,9, \ldots )$. This shows that $I = I_0$ is not
    entirely nonsaturable. In fact, with the help of \emph{VersalDeformations}
    package~\cite{Ilten_Versal_Deformations} we check that $[I_1]\in
    \Hilb_{I_1}$ is smooth and has $\ObFib = 0$, so it is entirely
    nonsaturable. We check directly that the tangent space dimensions at $I_0$
    and $I_1$ are equal, so it also follows that $I$ is a smooth point on the
    component of $I_1$, so $I$ is nonsaturable as well.

\subsection{An entirely nonsaturable ideal with nonzero fiber obstruction
    group using \BBname{} decomposition}

    Let $\Sk = \kappa[\alpha_0, \alpha_1, \alpha_2]$ with standard grading and
    consider the ideal
    \[
        I = (\alpha_0^3, \alpha_0\alpha_1^2, \alpha_0^2\alpha_2,
        \alpha_0\alpha_1\alpha_2, \alpha_0\alpha_2^4, \alpha_2^6).
    \]
    In this case $\ObFib$ is one-dimensional, so
    Theorem~\ref{ref:saturationDeformsWhenObFibVanishes:thm} does not apply. We will
    nevertheless prove that $I$ is entirely nonsaturable.
    Refine the standard grading on $\Sk$ to an $\mathbb{N}^2$-grading, where
    $\deg(\alpha_0) = \deg(\alpha_2) = (1, 0)$ and $\deg(\alpha_1) = (1,1)$.
    With the help of \emph{Macaulay2} we check that for the pair $(I,
    I^{\sat})$ we have $\ObFib_{0, *} =
    \ObFib_{0, -1}$, so in particular $\ObFib_{0, \geq 0} = 0$.
    The second coordinate of the grading gives a $\kappa^*$-action on $\Hilb_I$
    and $\Hilb_{I\subseteq I^{\sat}}$.
    We now use the theory of \BBname{} decompositions. Introducing this theory
    would take too much place, so we only sketch the main results.
    Repeating the proofs of
    Theorem~\ref{ref:obstructionAtFlag:thm} and
    Theorem~\ref{ref:deformsWhenObFibVanishes:thm} for degrees $(0, \geq 0)$,
    we conclude that the map $\Hilb_{I\subseteq I^{\sat}}^+\to \Hilb_I^{+}$ on
    \BBname{} decompositions is smooth, hence also the map
    \[
        \id_{\mathbb{A}^2} \times \pr_{I}\colon \mathbb{A}^2 \times \Hilb_{I\subseteq I^{\sat}}^+\to
        \mathbb{A}^2 \times \Hilb_I^{+}
    \]
    is smooth.
    Consider the embedding
    \[
        \mathbb{A}^2\ni (v_1, v_2)\to \begin{pmatrix}
            1 & v_1 & 0\\
            0 & 1 & 0\\
            0 & v_2 & 1
        \end{pmatrix}\in \GL_3
    \]
    The scheme $\mathbb{A}^2 \times \Hilb_I^{+}$ maps to $\Hilb_I$
    by the forgetful map $\Hilb_I^+\to \Hilb_I$ followed by the
    $\mathbb{A}^2$-action via $\mathbb{A}^2\into \GL_3$.
    Under this map, the tangent space to $\mathbb{A}^2$ maps to a
    $2$-dimensional subspace of $\Hom_{\Sk}(I, \Sk/I)_{0,-1}$.
    A final \emph{Macaulay2} computation
    shows that this subspace is the degree $(0, <0)$ part of the
    tangent space, hence the above tangent map is surjective in degrees
    $(0, *)$. Arguing as in~\cite[Theorem~4.5]{Jelisiejew__Elementary},
    we conclude that $\mathbb{A}^2 \times \Hilb_I^+\to \Hilb_I$ is an open
    immersion at
    $(0, [I])$, hence we conclude that the image of the natural morphism
    \[
        \mathbb{A}^2 \times \Hilb_{I\subseteq I^{\sat}}^+ \to \Hilb_I
    \]
    contains an open neighbourhood of $[I]$. Then the same is true for the
    morphism
    \[
        \mathbb{A}^2 \times \Hilb_{I\subseteq I^{\sat}}\to \Hilb_I,
    \]
    which proves that $[I]$ is entirely nonsaturable.
    This proof is interesting also because while the $\TgFlag\to \Hom_{\Sk}(I,
    \Sk/I)_{0}$ is surjective, the map $\psi_{0}$ is not; only the
    map $\psi_{0, \geq 0}$ is surjective.

    We provide the \emph{Macaulay2} code below.

{\small\begin{verbatim}
S = QQ[a_0 .. a_2, Degrees=>{{1, 0}, {1, 1}, {1, 0}}];
I = ideal mingens ideal(a_0^3, a_0*a_1^2, a_0^2*a_2, a_0*a_1*a_2, a_0*a_2^4, a_2^6);
Isat = ideal mingens saturate I;
obfib = Ext^1(Isat/I, S^1/Isat);
hs = hilbertSeries(obfib, Order=>1);
T1 := (gens ring hs)_1;
assert(select(terms hs, el -> first degree el == 0) == {T1^(-1)});
assert(sum select(terms hilbertSeries(Hom(I, S^1/I), Order=>1),
    el -> first degree el == 0) == T1^6 + T1^5 + T1^4 + T1^3 + T1^2 + 4*T1 + 3 + 2*T1^(-1));
    \end{verbatim}}

Since $H_{\Sk/I^2}(9) = 17$ the fact that $I$ is not saturable could also be deduced from \cite[Theorem~1.1]{Mandziuk}.
Here, by a completely different argument, we obtain a stronger result, that $I$ is entirely nonsaturable.

\section{Wild polynomials}\label{sec:wild}
For positive integers $r$ and $n$ we denote by $\genHF{r}{n}$ the Hilbert
function of $r$ points in general position on $\mathbb{P}^n$, that is, we have
$\genHF{r}{n}(i) = \min\{r, \binom{n+i}{i}\}$ for every $i$.
By $\genMHS{r}{n}$ we denote the multigraded Hilbert scheme 
$\Hilb^{\genHF{r}{n}}$ 
and by  $\genSat{r}{n}$ we denote 
$\Satbar^{\genHF{r}{n}} \subseteq \genMHS{r}{n}$.

\begin{remark}
If $r$ and $n$ are such that $\Hilb^r(\mathbb{P}^n)$ is irreducible, then $\genSat{r}{n}$ coincides with 
the closure of the locus of saturated and radical ideals. That, is we have
$\genSat{r}{n} = \operatorname{Slip}_{r, \mathbb{P}^n}$ where $\operatorname{Slip}_{r, \mathbb{P}^n}$ is as defined in \cite{Buczyska_Buczynski__border}.
\end{remark}

In this section $\kk = \kappa$ is an algebraically closed field and $\Sk=\kappa[\alpha_0,\alpha_1,\alpha_2]$ 
is a polynomial ring with graded dual ring $\Sk^* = \kadp[x_0,x_1,x_2]$.
Given a positive integer $d$ by $\nu_d\colon \mathbb{P}((\Sk^*)_1) \to \mathbb{P}((\Sk^*)_d)$ we denote the $d$-uple embedding $\ell \mapsto \ell^{[d]}$.
We now define the relevant ranks.

\begin{definition}
Let $F \in (\Sk^*)_d$ be a nonzero form of positive degree. Given a subscheme $\Gamma$ of $\mathbb{P}(\Sk^*)_d$, 
by $\langle \Gamma \rangle$ we denote the projective linear span of $\Gamma$.
If $[F]\in \langle \Gamma \rangle$, we say that $\Gamma$ is \emph{apolar to
$F$}.
The \emph{cactus rank} $\crr(F)$ of $F$ is the smallest $r$ such that there exists
    $\Gamma\subseteq \mathbb{P}((\Sk^*)_1) = \Proj(\Sk) \simeq
    \mathbb{P}^{2}$ with $\nu_d(\Gamma)$ apolar to $F$ and of degree $r$.
    The \emph{smoothable rank} $\srr(F)$ of $F$ is defined in the same way, but
        only considering smoothable $\Gamma$,
        see~\S\ref{ssec:smoothnessOfSaturated}. Finally, the \emph{rank}
        $\rr(F)$ of
        $F$ is defined by only considering smooth $\Gamma$; for $\Gamma =
        \{[\ell_1], \ldots ,[\ell_r]\}$ we have $\langle \Gamma \rangle =
        \langle \ell_1^d,  \ldots ,\ell_r^d\rangle$. The \emph{border rank}
        $\brr(F)$ of
        $F$ is the smallest $r$ such that $F$ is a limit of rank $r$ forms.
        We have $\crr(F) \leq \srr(F) \leq \rr(F)$ and $\brr(F)\leq \srr(F)$.
        These inequalities can be strict~\cite{Bucz_Bucz_smoothable}.
\end{definition}

We assume that the characteristic of $\kappa$ is zero or larger than the degree of any form whose rank we want to compute.
Therefore, the reader may think of $\Sk^*$ as an ordinary polynomial ring,
    compare~\S\ref{ssec:macaulaysInverseSystems}.
The main result of this section is
Proposition~\ref{prop:no_wild_polynomials}, in which we show that if a divided
power polynomial $F\in (\Sk^*)_d$ satisfies  $\brr(F)\leq d+3$ then we have
$\srr(F) = \brr(F)$.

\subsection{Apolarity in three variables}

This subsection gathers some technical lemmas necessary for the
proof of Proposition~\ref{prop:no_wild_polynomials}.
Throughout we assume that $F\in (\Sk^*)_d$ is a nonzero degree $d$ form,
for $d > 0$.

\begin{lemma}\label{lem:apolar_to_wild_is_nonsaturated}
Assume that $\brr(F) \leq r < \crr(F)$ for
some positive integer $r$. Then there exists an ideal $I \subseteq F^\perp$ such that $[I] \in \genSat{r}{2}$ and $I^{\sat}_d \neq I_d$.
\end{lemma}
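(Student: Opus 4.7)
The plan is to build $I$ via weak border apolarity as a limit of saturated ideals of reduced $r$-tuples of points in general position, and then to observe that the failure condition $I^{\sat}_d = I_d$ would produce a degree-$r$ apolar subscheme to $F$, contradicting $\crr(F) > r$.

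Since $\brr(F)\leq r$, I choose a family $F_t\in S^*_d$ over a smooth affine curve with $F_0=F$ and, for $t\neq 0$, a Waring decomposition $F_t=\sum_{i=1}^r\ell_i(t)^d$. After a generic perturbation of the $\ell_i(t)$ if needed, I arrange that the reduced $r$-tuple $\Gamma_t := \{[\ell_i(t)]\}\subseteq \mathbb{P}S^*_1\simeq\mathbb{P}^2$ is in general position, so that $H_{S/I(\Gamma_t)}=\genHF{r}{2}$. Then $[I(\Gamma_t)]\in\Hilb^{\genHF{r}{2},\sat}$, and using that $\genMHS{r}{2}$ is projective, after a finite base change of the parameter I obtain a limit $[I] := \lim_{t\to 0}[I(\Gamma_t)]\in\genMHS{r}{2}$, which lies in $\genSat{r}{2}$ by construction.

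To check $I\subseteq \Ann(F)$, fix a degree $e$. For each $t\neq 0$ one has $I(\Gamma_t)_e\subseteq \Ann(F_t)_e$, since any $g\in I(\Gamma_t)_e$ vanishes at every $\ell_i(t)$ and hence $g\hook F_t=\sum_i g(\ell_i(t))\,\ell_i(t)^{d-e}=0$ under the contraction action. The locus $\{(W,F'): W\cdot F'=0\}$ is closed in the product of the relevant Grassmannian with $S^*_d$, so the containment is preserved in the limit $t\to 0$, giving $I_e\subseteq \Ann(F)_e$ for every $e$.

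For the final claim, suppose for contradiction $I^{\sat}_d=I_d$ and set $\Gamma:=\Proj(S/I^{\sat})\subseteq\mathbb{P}^2$. Since $I\subseteq I^{\sat}$ and the Hilbert functions of $S/I$ and $S/I^{\sat}$ agree in large degrees, both equal to $r$, the scheme $\Gamma$ has degree $r$. Then $I(\Gamma)_d=I^{\sat}_d=I_d\subseteq \Ann(F)_d$, which by the Apolarity Lemma gives $[F]\in\langle \nu_d(\Gamma)\rangle$, exhibiting $\Gamma$ as a degree-$r$ apolar scheme to $F$ and contradicting $\crr(F)>r$. The main technical obstacle is arranging the initial family so that $H_{S/I(\Gamma_t)}$ is exactly $\genHF{r}{2}$ and taking limits carefully in the Hilbert scheme; once those are handled, the short cactus-rank contradiction is the heart of the argument.
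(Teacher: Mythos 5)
Your proof is correct and follows the same overall strategy as the paper: produce an ideal $[I]\in\genSat{r}{2}$ with $I\subseteq\Ann(F)$, then observe that $I^{\sat}_d=I_d$ would hand you a degree-$r$ apolar scheme and hence $\crr(F)\leq r$, a contradiction. The difference is one of packaging. The paper treats the first step as a black box, citing the weak border apolarity theorem \cite[Theorem~3.15]{Buczyska_Buczynski__border}, which delivers in one line an $[I]\in\genSat{r}{2}$ with $I\subseteq\Ann(F)$; you instead re-derive that statement from scratch (pick a family $F_t\to F$ with Waring decompositions, perturb to general position, pass to the multigraded Hilbert scheme limit, and verify containment in $\Ann(F)$ degree by degree by a closedness argument). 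That re-derivation is essentially the proof of the cited theorem, so there is no mathematical gain, and it burdens you with technicalities (arranging the family over a curve, preserving $F_t\to F$ under perturbation, base-changing to take the limit) that the citation absorbs. For the final step, the paper cites Teitler's cactus apolarity lemma \cite[Theorem~4.7]{teitler14}, whereas you invoke the defining equivalence $[F]\in\langle\nu_d(\Gamma)\rangle\Leftrightarrow I(\Gamma)_d\subseteq\Ann(F)_d$ directly; these are the same linear-algebra fact, and your version is fine. Net assessment: correct, same route, with one step done inline rather than cited.
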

\begin{proof}
By the border apolarity lemma
\cite[Theorem~3.15]{Buczyska_Buczynski__border}, there is an ideal $[I] \in \genSat{r}{2}$ such that $I \subseteq F^\perp$. If $I^{\sat}_d = I_d$,
then $I^{\sat} \subseteq F^\perp$. Thus, $\crr(F) \leq r$ follows from the cactus apolarity lemma \cite[Theorem~4.7]{teitler14}.
\end{proof}

\begin{lemma}\label{lem:middle_generator}
Let $e=\lceil \frac{d+1}{2} \rceil$. For every linear form $\ell\in (\Sk)_1$,
there is an element of $(F^\perp)_e$ that is not divisible by $\ell$.
\end{lemma}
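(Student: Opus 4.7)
I would argue by contradiction, supposing $\Ann(F)_e \subseteq \ell S_{e-1}$, and split on whether $\ell \circ F = 0$. If $\ell \circ F = 0$, then after a change of coordinates I may take $\ell = \alpha_2$ and $F \in \kkdp[x_0,x_1]_d$; the apolar ideal of $F$ in $\kk[\alpha_0,\alpha_1]$ is a complete intersection generated by two forms of degrees $a \le b$ with $a+b = d+2$, so $a \le \lfloor(d+2)/2\rfloor = e$. A generator of degree $a$ multiplied by $\alpha_0^{e-a}$ then gives an element of $\Ann(F)_e$ not divisible by $\alpha_2 = \ell$, contradicting the assumption.

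In the main case $\ell \circ F \ne 0$, set $\tilde F := \ell \circ F$. Since $R := S/(\ell) \cong \kk[y_1,y_2]$ is a domain, if some $\sigma \in \Ann(F)_i$ with $i \le e$ had nonzero image in $R_i$, then multiplying by any $\tau \in S_{e-i}$ with nonzero image in $R_{e-i}$ would produce an element of $\Ann(F)_e$ with nonzero image in $R_e$, contradicting the assumption. Hence $\Ann(F)_i \subseteq \ell S_{i-1}$, equivalently $\Ann(F)_i = \ell \cdot \Ann(\tilde F)_{i-1}$, for every $1 \le i \le e$. Comparing dimensions gives the relation
\[
\tilde h(i-1) = h(i) - (i+1), \qquad 1 \le i \le e,
\]
where $h := H_{S/\Ann(F)}$ and $\tilde h := H_{S/\Ann(\tilde F)}$.

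To conclude I would exploit that both $A := S/\Ann(F)$ and $\tilde A := S/\Ann(\tilde F)$ are Artinian Gorenstein of embedding dimension at most $3$, with socle degrees $d$ and $d-1$, so their Hilbert functions are symmetric and, by Stanley's theorem for codimension at most three, unimodal. For $d = 2k_0$ even (so $e = k_0+1$), applying the relation at $i = k_0$ and $i = k_0+1$ together with the $\tilde A$-symmetry $\tilde h(k_0-1) = \tilde h(k_0)$ forces $h(k_0+1) = h(k_0)+1$; then the $A$-symmetry $h(k_0+1) = h(k_0-1)$ yields $h(k_0-1) > h(k_0)$, violating unimodality of $h$ (whose peak must be at $k_0$). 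For $d = 2k+1$ odd (so $e = k+1$), the relation at $i = k, k+1$ combined with $A$-symmetry $h(k+1) = h(k)$ gives $\tilde h(k-1) = \tilde h(k) + 1$, and then $\tilde A$-symmetry produces $\tilde h(k+1) = \tilde h(k-1) > \tilde h(k)$, a dip forbidden by unimodality of $\tilde h$.

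The main difficulty will be orchestrating the bookkeeping so that the contradictory inequalities land precisely in the middle degrees where Stanley's unimodality theorem applies; the argument is genuinely special to three variables, since codimension-$\ge 4$ Gorenstein Hilbert functions need not be unimodal.
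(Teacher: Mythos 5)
Your proof is correct and takes essentially the same approach as the paper: both reduce the hypothesis $\Ann(F)_e \subseteq (\ell)$ to $\Ann(F)_{\leq e} \subseteq (\ell)$ via the domain $S/(\ell)$, derive the Hilbert-function relation $H_{S/\Ann(\ell\hook F)}(i-1) = H_{S/\Ann(F)}(i) - (i+1)$, and obtain a contradiction from the Gorenstein symmetry of both Hilbert functions together with Stanley's unimodality theorem in codimension $\le 3$, splitting on the parity of $d$. The only real difference is your preliminary case split on whether $\ell \hook F = 0$; the paper's argument dispenses with it, since when $\ell\hook F = 0$ the Hilbert function of $S/\Ann(\ell\hook F)$ is identically zero and vacuously satisfies the symmetry used, so the same computation goes through unchanged.
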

\begin{proof}
    Let $G = \ell\hook F$. Suppose that $(F^\perp)_e \subseteq (\ell)$, then
    also $(F^\perp)_{\leq e}
    \subseteq (\ell)$, so the multiplication by $\ell$ is a bijection from
    $(G^\perp)_a$ to $(F^\perp)_{a+1}$ for every $a\leq e-1$.
    Let $f$, $g$ be the Hilbert functions of
$\Sk/F^\perp$, $\Sk/G^\perp$, respectively. For $a\leq e-1$ we have
\begin{equation}\label{eq:relation_between_annihilators}
f(a+1) = \dim_{\kappa} (\Sk)_{a+1} - \dim_\kappa (F^\perp)_{a+1} = a+2 + \dim_\kappa (\Sk)_{a} -
\dim_\kappa (G^\perp)_a = a+2 + g(a). 
\end{equation}

Assume that $d$ is even. Then $d=2e-2$ and we have
\begin{align*}
f(e-2) &= f((2e-2)-(e-2)) = f(e) \stackrel{\eqref{eq:relation_between_annihilators}}{=}  g(e-1)+e+1 \\
&= g((2e-3)-(e-1))+e+1 = g(e-2) + e+1 \stackrel{\eqref{eq:relation_between_annihilators}}{=} f(e-1)+1.
\end{align*}
This contradicts the unimodality of the Hilbert function of $\Sk/F^\perp$,
see~\cite[Theorem~4.2]{stanley_hilbert_functions_of_graded_algebras}, \cite[Theorem~5.25]{iakanev}.

Assume that $d$ is odd. Then $d=2e-1$ and we have
\begin{align*}
g(e) &= g((2e-2)-e) = g(e-2) \stackrel{\eqref{eq:relation_between_annihilators}}{=} f(e-1) - e \\
&= f((2e-1)-(e-1)) -e = f(e) - e  \stackrel{\eqref{eq:relation_between_annihilators}}{=} g(e-1)+1.
\end{align*}
This contradicts the unimodality of the Hilbert function of $\Sk/G^\perp$.
\end{proof}

\begin{lemma}\label{lem:square_annihilates}
If there is a linear form $\ell$ in $(\Sk)_1$ such that $(\ell^2)\subseteq F^\perp$, then $\crr(F) \leq 2\cdot \lceil \frac{d+1}{2} \rceil$.
\end{lemma}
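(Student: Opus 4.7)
The proof will combine the preceding Lemma~\ref{lem:middle_generator} with a classical complete-intersection apolarity argument. The plan is to produce an explicit apolar subscheme of degree $2e$, where $e = \lceil (d+1)/2 \rceil$, by taking the ideal generated by $\ell^2$ together with a carefully chosen annihilator of $F$ of degree $e$.

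First I would apply Lemma~\ref{lem:middle_generator} to the linear form $\ell$. This yields an element $q \in \Ann(F)_e$ which is not divisible by $\ell$. Set $I := (\ell^2, q) \subseteq \Ann(F)$. The next step is to verify that $\ell^2, q$ form a regular sequence in $S$. After a linear change of coordinates one may assume $\ell = \alpha_0$; write $q = q_0 + \alpha_0 q_1$ with $q_0 \in \kk[\alpha_1,\alpha_2]_e$ and $q_1 \in \kk[\alpha_1,\alpha_2]_{e-1}$. The condition $\ell \nmid q$ says precisely that $q_0 \neq 0$. A direct calculation inside $S/(\alpha_0^2) \simeq \kk[\alpha_1,\alpha_2] \oplus \alpha_0\kk[\alpha_1,\alpha_2]$ shows that multiplication by $q$ is injective: if $(a+\alpha_0 b)q \equiv 0 \pmod{\alpha_0^2}$, then $aq_0 = 0$ and $aq_1 + bq_0 = 0$ in the domain $\kk[\alpha_1,\alpha_2]$, forcing $a = b = 0$.

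Hence $I$ is a codimension-two complete intersection in the three-dimensional regular ring $S$. In particular $S/I$ is Cohen--Macaulay of depth $1$, so $I$ is saturated with respect to the irrelevant ideal and coincides with $I(\Gamma)$ for the zero-dimensional subscheme $\Gamma := V(I) \subseteq \mathbb{P}^2$. By B\'ezout's theorem, $\deg(\Gamma) = \deg(\ell^2)\cdot\deg(q) = 2e$. Since $I(\Gamma) = I \subseteq \Ann(F)$, the scheme $\Gamma$ is apolar to $F$ in the sense of the cactus apolarity lemma, proving $\crr(F) \le 2e = 2\lceil (d+1)/2 \rceil$.

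There is no significant obstacle: the substantive content has been packaged in Lemma~\ref{lem:middle_generator}, which supplies the $q$ of the right degree avoiding $(\ell)$; the rest is the standard observation that $(\ell^2, q)$ is a saturated complete intersection as soon as $\ell \nmid q$. The only subtlety worth flagging is checking saturatedness (equivalently, that $V(I)$ has no embedded point at the vertex), which follows from the Cohen--Macaulay property of codimension-two complete intersections.
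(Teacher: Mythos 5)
Your proposal is correct and matches the paper's proof: both invoke Lemma~\ref{lem:middle_generator} to produce $\sigma \in \Ann(F)_e$ not divisible by $\ell$ and then observe that $(\ell^2,\sigma)$ is a saturated complete intersection of degree $2e$ contained in $\Ann(F)$. You have merely spelled out the regular-sequence verification that the paper leaves implicit.
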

\begin{proof}
    Let $e=\lceil\frac{d+1}{2}\rceil$. By Lemma~\ref{lem:middle_generator}
    there exists a $\sigma\in (F^\perp)_{e}$
    not divisible by $\ell$. The ideal $(\ell^2, \sigma)$ is a complete
    intersection, hence is saturated and proves that $\crr(F)\leq 2e$.
\end{proof}

\begin{lemma}\label{lem:apolar_contained_in_line_has_special_form}
Let $k\geq 2$ be an integer. Assume that there exists $[I]\in \genSat{d+k}{2}$ such that
$I^{\sat}_1\neq 0$ and $I\subseteq F^\perp$. Then
\begin{enumerate}[label = (\arabic*)]
    \item\label{it:special_form_part_one} Up to a linear change of variables, we have
\[
I^\perp_{d+k-2} = \langle x_0G, x_1^{d+k-2}, x_1^{d+k-3}x_2, \ldots, x_2^{d+k-2}\rangle 
\]
for some $G\in \kadp[x_1,x_2]_{d+k-3}$.
\item\label{it:special_form_part_two} If $H_{\kappa[\alpha_1, \alpha_2]/G^\perp} (k-2) = k-1$, then $\alpha_0^2 \in F^\perp$.
\end{enumerate}
\end{lemma}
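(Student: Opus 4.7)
\medskip
\noindent\textbf{Proof plan.} Since $[I]\in \genSat{d+k}{2}$, the quotient $S/I$ has the generic Hilbert function $\genHF{d+k}{2}$. The hypothesis $I^{\sat}_1\neq 0$ combined with $\dim_{\kk}(S/I^{\sat}) = d+k\geq 3$ forces $\dim_{\kk} I^{\sat}_1 = 1$: if $\dim I^{\sat}_1\geq 2$ then $V(I^{\sat})$ lies in a point, and then $H_{S/I^{\sat}}(d+k-2)$ would be at most $1$, contradicting $d+k-1\geq 2$. Therefore, after a linear change of variables, $I^{\sat}_1 = \langle \alpha_0\rangle$, and the saturated ideal of a length $d+k$ subscheme of the line $(\alpha_0 = 0)$ has the form $I^{\sat} = (\alpha_0, F_{d+k})$ with $F_{d+k}\in \kk[\alpha_1,\alpha_2]_{d+k}$ and Hilbert function $(1,2,\ldots,d+k-1,d+k,d+k,\ldots)$. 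In particular $\dim_{\kk}(I^{\sat}/I)_{d+k-2} = 1$, so $I\neq I^{\sat}$.

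For part~\ref{it:special_form_part_one}, the plan is to invoke the sticky-ideal machinery. Since $I$ is saturable and nonsaturated, Theorem~\ref{ref:saturationDeformsWhenObFibVanishes:thm} rules out $\ObFib(I, I^{\sat}) = 0$, and Example~\ref{ref:necessaryConditionOnLine:exa} then yields $(I^{\sat})^2_{d+k-2}\subseteq I_{d+k-2}$. Because $I^{\sat}$ is generated in degrees $1$ and $d+k$, we have $(I^{\sat})^2_{d+k-2} = \alpha_0^2\cdot S_{d+k-4}$. Passing to inverse systems, every $f\in I^{\perp}_{d+k-2}$ then satisfies $\alpha_0^2\hook f = 0$, i.e.\ has $x_0$-degree at most $1$. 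Since $I^{\perp}_{d+k-2}\supseteq (I^{\sat})^{\perp}_{d+k-2} = \kkdp[x_1,x_2]_{d+k-2}$ and $\dim I^{\perp}_{d+k-2} = \dim \kkdp[x_1,x_2]_{d+k-2} + 1$, a choice of complement gives $I^{\perp}_{d+k-2} = \kkdp[x_1,x_2]_{d+k-2}\oplus \langle x_0G\rangle$ for some $G\in \kkdp[x_1,x_2]_{d+k-3}$, which is the claim.

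For part~\ref{it:special_form_part_two}, I push part~\ref{it:special_form_part_one} down to degree $d$ by contraction. Since $I$ is an ideal, for any $\sigma\in S_{k-2}$ and $f\in I^{\perp}_{d+k-2}$ the form $\sigma\hook f$ lies in $I^{\perp}_d$, i.e.\ $S_{k-2}\hook I^{\perp}_{d+k-2}\subseteq I^{\perp}_d$. A direct computation using $\alpha_0\hook(x_0G) = G$, $\alpha_0\hook G = 0$, and that $\alpha_0\hook \kkdp[x_1,x_2] = 0$ gives
\[
S_{k-2}\hook I^{\perp}_{d+k-2} \;=\; \kkdp[x_1,x_2]_d \;\oplus\; x_0\cdot\bigl(\kk[\alpha_1,\alpha_2]_{k-2}\hook G\bigr),
\]
where the sum is direct because its two summands sit in distinct $x_0$-degrees. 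The hypothesis $H_{\kk[\alpha_1,\alpha_2]/\Ann(G)}(k-2) = k-1$ says the contraction map $\kk[\alpha_1,\alpha_2]_{k-2}\to \kkdp[x_1,x_2]_{d-1}$ by $G$ is injective, so the second summand has dimension $k-1$; the total dimension is $(d+1) + (k-1) = d+k$. Note that the hypothesis forces $k-1\leq d$ (the target has dimension $d$), which in turn gives $d+k\leq \binom{d+2}{2}$ and hence $\dim I^{\perp}_d = \genHF{d+k}{2}(d) = d+k$. Equality of dimensions promotes the inclusion to an equality, so $F\in I^{\perp}_d$ has $x_0$-degree at most $1$, giving $\alpha_0^2\hook F = 0$.

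The only nontrivial input is the necessary condition from Example~\ref{ref:necessaryConditionOnLine:exa} driving part~\ref{it:special_form_part_one}; the rest is apolarity bookkeeping and a dimension count, with the mild arithmetic check that the hypothesis in part~\ref{it:special_form_part_two} forces $I^{\perp}_d$ to have the expected dimension.
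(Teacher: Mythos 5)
Your proposal is correct and follows essentially the same route as the paper: part \ref{it:special_form_part_one} via Theorem~\ref{ref:saturationDeformsWhenObFibVanishes:thm} forcing $\ObFib(I,I^{\sat})\neq 0$ and Example~\ref{ref:necessaryConditionOnLine:exa} giving $(\alpha_0^2)_{d+k-2}\subseteq I$, and part \ref{it:special_form_part_two} by contracting $I^{\perp}_{d+k-2}$ down to degree $d$ and matching dimensions to identify $I_d^{\perp}$ with $\kkdp[x_1,x_2]_d + x_0\cdot(\kk[\alpha_1,\alpha_2]_{k-2}\hook G)$. Your extra remarks (why the scheme lies on a line, the check $d+k\leq\binom{d+2}{2}$) only make explicit details the paper leaves implicit.
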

\begin{proof}
    The ideal $I^{\sat}$ defines a degree $d+k$ subscheme on a line, hence
    $(I^{\sat})_{\leq d+k-2} \subseteq (\ell)$ where $\ell\in (\Sk)_1$ defines the
    line. We change coordinates so that $\ell = \alpha_0$. Comparing Hilbert functions, we obtain that $\dim_{\kappa} (I^{\sat}/I)_{d+k-2}
    = 1$, so $I^\perp_{d+k-2} = \langle x_0G, x_1^{d+k-2}, x_1^{d+k-3}x_2, \ldots,
    x_2^{d+k-2}\rangle$ for some $G\in \kadp[x_0, x_1,
    x_2]_{d+k-3}$.
Since  $[I]\in \genSat{d+k}{2}$ we get from Theorem~\ref{ref:saturationDeformsWhenObFibVanishes:thm} that 
$\ObFib  = \Ext^1_{\Sk}\left(\frac{I^{\sat}}{I}, \frac{\Sk}{I^{\sat}}\right)_0\neq 0$.
It follows from Example~\ref{ref:necessaryConditionOnLine:exa} that $(\alpha_0^2)_{d+k-2} \subseteq I_{d+k-2}$. 
As a result, $I_{d+k-2}$ is as claimed in
part~\ref{it:special_form_part_one}.
If $H_{\kappa[\alpha_1, \alpha_2]/G^\perp} (k-2) = k-1$, then
the space $\mathcal{L} = x_0\cdot (\kappa[\alpha_1, \alpha_2]_{k-2}\hook G)
+ \langle x_1^d, x_1^{d-1}x_2, \ldots, x_2^d \rangle$ is $(d+k)$-dimensional.
It is contained in $I^\perp_d$, so comparing dimensions we get
$I^\perp_d = \mathcal{L}$, so $(\alpha_0^2)_d \subseteq F^{\perp}$
and part~\ref{it:special_form_part_two} follows.
\end{proof}

For $r\in \mathbb{Z}_{>0}$ let $f_r$ be the Hilbert function $(1,3,4,5, \ldots , r-1, r, r, r,
 \ldots )$.
 
\begin{lemma}\label{lem:apolar_containded_in_f}
    Assume that $d \geq 3 $
and that there exists $[I]\in \genSat{d+3}{2}$ such that $\Sk/I^{\sat}$ has Hilbert function $f_{d+3}$ and $I\subseteq F^\perp$. Up to a linear change of variables we have
$(\alpha_0^2\alpha_1, \alpha_0^2\alpha_2)\subseteq F^\perp$  or $(\alpha_0^3, \alpha_0^2\alpha_1)\subseteq F^\perp$.
\end{lemma}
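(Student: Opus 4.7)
My approach is to classify $I^{\sat}$ by normal forms of its quadratic part $I^{\sat}_2$ and then apply Example~\ref{ex:lineAndPoints} in each case. Since $H_{S/I^{\sat}}(2)=4$, the space $I^{\sat}_2$ is two-dimensional; two quadrics in $\mathbb{P}^2$ without a common linear factor have Bezout intersection of length only four, while $V(I^{\sat})$ has length $d+3\geq 6$, so the two generators of $I^{\sat}_2$ must share a common linear form. After a linear change of variables I may assume this form is $\alpha_0$, so $I^{\sat}_2=\alpha_0\cdot V$ for some two-dimensional $V\subseteq S_1$. A further change within $\langle\alpha_1,\alpha_2\rangle$ yields two normal forms: Case~A with $V=\langle\alpha_1,\alpha_2\rangle$, so $I^{\sat}_2=\langle\alpha_0\alpha_1,\alpha_0\alpha_2\rangle$ (geometrically $d+2$ points on the line $\alpha_0=0$ plus one point at $[1:0:0]$), and Case~B with $V=\langle\alpha_0,\alpha_1\rangle$, so $I^{\sat}_2=\langle\alpha_0^2,\alpha_0\alpha_1\rangle$ (geometrically $d+1$ points on the line plus a length-two jet at $[0:0:1]$ in the direction transverse to the line).

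From $H_{S/I}(d)=d+3>d+2=H_{S/I^{\sat}}(d)$ I get $I\subsetneq I^{\sat}$ with $\dim(I^{\sat}/I)_d=1$, and since $I$ is saturable, Theorem~\ref{ref:deformsWhenObFibVanishes:thm} forces $\ObFib(I,I^{\sat})\neq 0$. I set $J:=I+(\alpha_0)$. In both cases a direct computation shows that $V(J)=V(I^{\sat})\cap V(\alpha_0)$ consists of $d+2$ reduced points on the line (in Case~B the embedded jet is cut down to the single reduced point $[0:0:1]$, since locally $(\alpha_0^2,\alpha_1,\alpha_0)=(\alpha_0,\alpha_1)$), so $J^{\sat}$ is the saturated ideal of these $d+2$ collinear points and $S/J^{\sat}$ has Hilbert function $(1,2,\ldots,d+1,d+2,d+2,\ldots)$. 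The assumptions of Example~\ref{ex:lineAndPoints} are therefore satisfied (with $e=1$ and the ``$d$'' of that example equal to $d+2$), so $\ObFib(I,I^{\sat})\neq 0$ translates to $(I^{\sat}/(I+I^{\sat}\cdot J^{\sat}))_d\neq 0$; since $\dim(I^{\sat}/I)_d=1$, this last condition is equivalent to the concrete inclusion $(I^{\sat}\cdot J^{\sat})_d\subseteq I_d$.

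The degree-$d$ part of $I^{\sat}\cdot J^{\sat}$ is generated by products of the degree-$2$ generators of $I^{\sat}$ with the single linear generator $\alpha_0$ of $J^{\sat}$; the other generators of $I^{\sat}$ and $J^{\sat}$ live in degree $\geq d+2$ and so cannot contribute. In Case~A this gives $(I^{\sat}\cdot J^{\sat})_d=(\alpha_0^2\alpha_1,\alpha_0^2\alpha_2)\cdot S_{d-3}$, and in Case~B it gives $(\alpha_0^3,\alpha_0^2\alpha_1)\cdot S_{d-3}$. Combining with $I\subseteq\Ann(F)$, for each cubic $\sigma$ in the appropriate pair we have $(\sigma\cdot s)\hook F=s\hook(\sigma\hook F)=0$ for all $s\in S_{d-3}$; the nondegenerate pairing between $S_{d-3}$ and $S^{*}_{d-3}$ then upgrades this to $\sigma\hook F=0$, yielding the two alternative conclusions $(\alpha_0^2\alpha_1,\alpha_0^2\alpha_2)\subseteq\Ann(F)$ or $(\alpha_0^3,\alpha_0^2\alpha_1)\subseteq\Ann(F)$.

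I expect the main obstacle to be the classification step, i.e., proving that Cases~A and~B exhaust the possibilities and justifying the normal forms via linear changes of variables; this relies on the Bezout bound together with a case split based on whether $\alpha_0\in V$, plus a primary decomposition of the ideal $\alpha_0(\alpha_0,\alpha_1)=(\alpha_0)\cap(\alpha_0^2,\alpha_1)$ to identify the embedded jet in Case~B. A secondary subtlety is verifying that the surjection $C(I)\twoheadrightarrow C(J)$ used in Example~\ref{ex:lineAndPoints} is bijective in positive degrees, which I would handle by a direct Hilbert function computation giving $C(I)_i=C(J)_i=d+1-i$ for $1\leq i\leq d$ and both zero for $i\geq d+1$.
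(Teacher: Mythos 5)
Your proof is correct and follows essentially the same route as the paper's: the same two normal forms for $I^{\sat}_2$, the same appeal to Theorem~\ref{ref:saturationDeformsWhenObFibVanishes:thm} (or equivalently \ref{ref:deformsWhenObFibVanishes:thm}) to force $\ObFib\neq 0$, and the same use of Example~\ref{ex:lineAndPoints} with $J^{\sat}$ the ideal of the $d+2$ collinear points. You simply supply details the paper leaves implicit (the Bezout justification of the normal forms, and taking $J = I+(\alpha_0)$ rather than the paper's $J = (\alpha_0,\theta_{d+2})$, which have the same saturation and thus give the identical conclusion $\alpha_0\cdot I^{\sat}_2 \subseteq \Ann(F)$).
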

\begin{proof}
We may and do assume that $I^{\sat}_2 = (\alpha_0\alpha_1, \alpha_0\alpha_2)_2$ or $I^{\sat}_2 = (\alpha_0^2,\alpha_0\alpha_1)_2$. Let $J=(\alpha_0, \theta_{d+2})$, 
where $\theta_{d+2}\in S_{d+2}$ is a minimal generator of $I^{\sat}$. It
follows from Theorem~\ref{ref:saturationDeformsWhenObFibVanishes:thm} and
Example~\ref{ex:lineAndPoints} that we have $(\alpha_0\cdot I^{\sat})_d \subseteq
I_d\subseteq (F^\perp)_d$.
From this we deduce that $\alpha_0\cdot I^{\sat}_2 \subseteq F^\perp$.
\end{proof}

\begin{lemma}\label{lem:generators_in_degree_3}
Assume that $d\geq 3$ and that there exists $[I]\in \genSat{d+3}{2}$ such that $\Sk/I^{\sat}$ has Hilbert function $\genHF{d+3}{1}$ and $I\subseteq F^\perp$. Up to a linear change of variables we have $(\alpha_0^2\ell, \alpha_0\alpha_1\ell) \subseteq F^\perp$ for a linear form  $\ell\in (\Sk)_1$. 
\end{lemma}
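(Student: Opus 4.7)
The plan is to normalize $I$ via Lemma~\ref{lem:apolar_contained_in_line_has_special_form} with $k=3$ and, in both resulting cases, to reduce the conclusion to the single statement $\alpha_0^2 \in \Ann(F)$; once this is in hand, taking $\ell = \alpha_0$ gives $\alpha_0^2\ell = \alpha_0\cdot\alpha_0^2$ and $\alpha_0\alpha_1\ell = \alpha_1\cdot\alpha_0^2$, both of which lie in $(\alpha_0^2)\subseteq \Ann(F)$ since $\Ann(F)$ is an ideal.

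First I apply Lemma~\ref{lem:apolar_contained_in_line_has_special_form} with $k=3$: after a linear change of variables we have $I^{\sat}_1 = \langle\alpha_0\rangle$ and
\[
I_{d+1} = \langle x_0 G,\, x_1^{d+1},\, x_1^{d}x_2,\, \ldots,\, x_2^{d+1}\rangle^{\perp}
\]
for some nonzero $G\in\kk[x_1,x_2]_d$. If $H_{\kk[\alpha_1,\alpha_2]/\Ann(G)}(1)=2$, part~\ref{it:special_form_part_two} of the same lemma directly yields $\alpha_0^2\in\Ann(F)$ and we are done.

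Otherwise $G$ is a $d$-th power of a linear form in $\langle x_1,x_2\rangle$, and after a further change of coordinates in $\langle\alpha_1,\alpha_2\rangle$ we may assume $G=x_2^d$. Unpacking the dual description, $I_{d+1} = \alpha_0\cdot V$ for the hyperplane $V\subseteq S_d$ of forms whose $\alpha_2^d$-coefficient vanishes. Combined with $S_1\cdot I_d\subseteq I_{d+1}$, this forces $I_d = \alpha_0\cdot W_d$ for a codimension-two subspace $W_d\subseteq S_{d-1}$, contained in the hyperplane where the $\kk[\alpha_1,\alpha_2]$-part has no $\alpha_2^{d-1}$-coefficient. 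It then suffices to show $\alpha_0\cdot S_{d-2}\subseteq W_d$, since this yields $\alpha_0^2\cdot S_{d-2}\subseteq I_d\subseteq \Ann(F)_d$ and hence $\alpha_0^2\in\Ann(F)$.

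The hard part is the containment $\alpha_0\cdot S_{d-2}\subseteq W_d$, which translates dually to the claim that the unique minimal generator of $I^{\perp}$ in degree $d$ (modulo $S_1\hook I_{d+1}^{\perp}$) has $x_0$-degree at most one. I would prove this by applying Theorem~\ref{ref:deformsWhenObFibVanishes:thm} to the intermediate ideal $J := I + (\alpha_0^2)$, which satisfies $I\subsetneq J\subseteq I^{\sat}$ because $I_2=0$ and $(\alpha_0^2)\subseteq(\alpha_0)\subseteq I^{\sat}$. Saturability of $I$ then forces $\ObFib(I,J)\neq 0$, and a direct analysis of $J/I$ — an $S_+$-torsion module concentrated in degrees $\{2,\ldots,d\}$ whose Hilbert function is determined by the putative failure of the containment — produces the desired $\alpha_0\cdot S_{d-2}\subseteq W_d$.
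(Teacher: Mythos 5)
Your proposal sets out to prove $\alpha_0^2 \in \Ann(F)$ in all cases, and this is where it breaks down: in the subcase $G_d = x_2^d$, the statement $\alpha_0^2 \in \Ann(F)$ is simply false in general. Indeed, $\alpha_0^2 \in \Ann(F)$ is equivalent to $\alpha_0 \in \Ann(\alpha_0 \hook F)$, i.e.\ to $\Ann(\alpha_0\hook F)_1 \neq 0$, i.e.\ to $H_{S/\Ann(\alpha_0\hook F)}(1) \leq 2$. The paper's proof explicitly handles the complementary case $H_{S/\Ann(\alpha_0\hook F)}(1) = 3$, in which $\alpha_0^2 \notin \Ann(F)$ by definition, and there it only obtains the weaker conclusion $(\alpha_0^2\ell, \alpha_0\alpha_1\ell) \subseteq \Ann(F)$ for a linear form $\ell$ that need not be $\alpha_0$. (The later case split in Proposition~\ref{prop:no_wild_polynomials} into $(\alpha_0^2\alpha_1, \alpha_0\alpha_1^2)$ and $(\alpha_0^2\alpha_2, \alpha_0\alpha_1\alpha_2)$, handled by Lemmas~\ref{lem:special_case_3} and~\ref{lem:special_case_4}, also witnesses that $\ell$ genuinely varies.) Since you are trying to prove a containment $\alpha_0 \cdot S_{d-2} \subseteq W_d$ that fails whenever $H_{S/\Ann(\alpha_0\hook F)}(1) = 3$, no amount of deformation theory will rescue the argument.

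Additionally, the deformation-theoretic step is left as a promissory note: applying Theorem~\ref{ref:deformsWhenObFibVanishes:thm} to $J = I + (\alpha_0^2)$ would only tell you that saturability of $I$ forces $\ObFib(I,J) \neq 0$, and you give no argument linking $\ObFib(I,J) \neq 0$ to the desired containment on $W_d$ — indeed, the counterexample above shows no such implication can hold. What the paper actually does after normalizing $G_d = x_2^d$ is to record the dual constraint $I_{d-1} \subseteq \langle x_0 x_2^{d-2}\rangle^\perp$, split according to $H_{S/\Ann(\alpha_0\hook F)}(1)$, and in the hard case use the Gorenstein symmetry of $\Ann(\alpha_0\hook F)$ plus a dimension count to force $(\Ann(F) \cap (\alpha_0))_{d-1} = I_{d-1}$, whence $\dim_\kk\bigl((\alpha_0^2,\alpha_0\alpha_1)_{d-1}\hook F\bigr) \leq 2$ and the existence of $\ell$. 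You would need a substitute for this latter analysis, not a stronger but false intermediate claim.
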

\begin{proof}
By
Lemma~\ref{lem:apolar_contained_in_line_has_special_form}\ref{it:special_form_part_one} applied with
$k=3$, we obtain
$I^\perp_{d+1} = \langle x_0G_d, x_1^{d+1}, x_1^dx_2, \ldots,
x_2^{d+1}\rangle$ for some nonzero $G_d\in \kadp[x_1,x_2]_d$.
Consider $\langle \alpha_1 \hook G_d, \alpha_2 \hook G_d\rangle $. If
this space is two-dimensional, then applying
Lemma~\ref{lem:apolar_contained_in_line_has_special_form}\ref{it:special_form_part_two}, we get
$\alpha_0^2\in F^\perp$. We thus assume that
$\dim_\kappa \langle \alpha_1 \hook G_d, \alpha_2 \hook G_d \rangle =
1$. We may and do assume that $G_d = x_2^d$. As a result, we have
\begin{equation}\label{eqn:no_terms_with_alpha0alpha2n-2}
I^\perp_{d-1} \supseteq  \langle x_0x_2^{d-2} \rangle.
\end{equation}

If $H_{\Sk/(\alpha_0\hook F)^\perp}(1) \leq 2$, then there is a linear form $\ell
\in (\Sk)_1$ such that $\alpha_0\ell \in F^\perp$ and the proof is complete. Thus, we assume that  $H_{\Sk/(\alpha_0\hook F)^\perp}(1) = 3$. We have
\begin{align*}
H_{\Sk/\left(F^\perp\cap (\alpha_0)\right)}(d-1) &= \dim_\kappa (\Sk)_{d-1} - \big(\dim_\kappa (\Sk)_{d-2} - H_{(\Sk)/(\alpha_0\hook F)^\perp}(d-2)\big) \\
&= d + H_{\Sk/(\alpha_0\hook F)^\perp}\big((d-1)-(d-2)\big) = d+3.
\end{align*}
It follows that $\big(F^\perp\cap(\alpha_0)\big)_{d-1} = I_{d-1}$.
Thus, from Equation \eqref{eqn:no_terms_with_alpha0alpha2n-2} we get
\[
\big(\kappa\cdot (\alpha_0\alpha_2^{d-2}\hook F)\big) \cap \big((\alpha_0^2, \alpha_0\alpha_1)_{d-1}\hook F\big) = 0.
\]
In particular, $\dim_\kappa \big((\alpha_0^2, \alpha_0\alpha_1)_{d-1}\hook F\big) \leq 2$, so there exists a linear form $\ell\in (\Sk)_1$ such that 
\[
\big((\alpha_0^2, \alpha_0\alpha_1)_{d-1}\hook F\big) \subseteq (\ell)^\perp.
\]
We get $(\alpha_0^2\ell, \alpha_0\alpha_1\ell)_d \subseteq F^\perp$ and we conclude that $\alpha_0^2\ell, \alpha_0\alpha_1\ell$ are in $F^\perp$.
\end{proof}

\subsection{Nonexistence of wild polynomials of small border rank}

A form $F\in (\Sk^*)_d$ is called \emph{wild} if its border rank is smaller than its smoothable rank.

\begin{lemma}[{\cite[Proposition~3.4]{Mandziuk}}]\label{lem:d+2_not_wild}
If $\brr(F) \leq d+2$ for some $F \in (\Sk^*)_d$, then $\crr(F) = \srr(F) = \brr(F)$. In particular, $F$ is not wild.
\end{lemma}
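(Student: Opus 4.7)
In three variables the cactus and smoothable ranks always agree because $\Hilb^r(\mathbb{P}^2)$ is irreducible (Fogarty), so every apolar zero-dimensional subscheme of $\mathbb{P}^2$ is smoothable. Thus $\crr(F)=\srr(F)$ is automatic and the claim reduces to $\srr(F)=\brr(F)$. The inequality $\brr(F)\le\srr(F)$ is a standard limit-in-the-Grassmannian argument (given a smoothable apolar $\Gamma$ and a reduced smoothing $\Gamma_t\to\Gamma$, find $F_t\in\spann{\nu_d(\Gamma_t)}$ with $F_t\to F$), so the content is $\srr(F)\le\brr(F)$.

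My plan is to set $r=\brr(F)\le d+2$, invoke the border apolarity lemma to pick $[I]\in\genSat{r}{2}$ with $I\subseteq\Ann(F)$, and produce from it a saturated apolar subideal of $\Ann(F)$ defining a subscheme of $\mathbb{P}^2$ of length at most $r$. The small-degree cases $d\le 2$ are dealt with by direct inspection, so I assume $d\ge 3$. The easy case is $I^{\sat}_d=I_d$. A short contraction computation promotes $I\subseteq\Ann(F)$ to $I^{\sat}\subseteq\Ann(F)$: for $\sigma\in I^{\sat}_e$ with $e<d$, one has $S_{d-e}\sigma\subseteq I^{\sat}_d=I_d\subseteq\Ann(F)$, so $\sigma\hook F\in S^*_{d-e}$ is annihilated by all of $S_{d-e}$ and hence vanishes. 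Then $V(I^{\sat})$ is apolar to $F$, of degree $r$ (the composition $\genSat{r}{2}\to\Hilb^r(\mathbb{P}^2)$ lands in the length-$r$ Hilbert scheme), and smoothable, so $\srr(F)\le r$ and we are done.

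The core case is $I^{\sat}_d\ne I_d$, i.e.\ $H_{S/I^{\sat}}(d)<r$. Here I would invoke the classical fact that the Castelnuovo--Mumford regularity of a length-$r$ zero-dimensional subscheme of $\mathbb{P}^2$ is at most $r-1$, with equality only when the subscheme is contained in a line (this follows from the Macaulay growth bound: if $V(I^{\sat})$ is not on a line, then $H_{S/I^{\sat}}(1)=3$ and $H_{S/I^{\sat}}(k)\ge\min(\binom{k+2}{2},r)$ stabilises at $r$ strictly before degree $r-1$). Since $H_{S/I^{\sat}}(d)<r$ forces regularity $>d$, while $r\le d+2$ forces regularity $\le r-1\le d+1$, the only possibility is $r=d+2$, regularity $=d+1$, and $V(I^{\sat})$ on a line. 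This regularity-forces-on-a-line step is the main technical obstacle.

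Finally, with $V(I^{\sat})$ on the line $\{\alpha_0=0\}$ and $r=d+2$, Lemma~\ref{lem:apolar_contained_in_line_has_special_form}(1) (with $k=2$) puts $I_d^\perp$ in the normal form $\spann{x_0G,x_1^d,\ldots,x_2^d}$ with $G\in\kkdp[x_1,x_2]_{d-1}$; the dimension count $\dim I_d^\perp=d+2$ forces $G\ne 0$, so the hypothesis $H_{\kk[\alpha_1,\alpha_2]/\Ann(G)}(0)=1$ of part~(2) holds and yields $\alpha_0^2\in\Ann(F)$. Lemma~\ref{lem:middle_generator} then supplies $\sigma\in\Ann(F)_{\lceil(d+1)/2\rceil}$ not divisible by $\alpha_0$, so $(\alpha_0^2,\sigma)\subseteq\Ann(F)$ is a complete intersection of degree $2\lceil(d+1)/2\rceil\le d+2=r$, saturated, with smoothable vanishing locus in $\mathbb{P}^2$; this gives $\srr(F)\le r$ and closes the proof.
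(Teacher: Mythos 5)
Your argument is correct and is essentially the paper's proof: border apolarity gives $[I]$ in the saturable locus with $I\subseteq\Ann(F)$, the case $I^{\sat}_d=I_d$ is exactly Lemma~\ref{lem:apolar_to_wild_is_nonsaturated}, the case $I^{\sat}_d\neq I_d$ forces $r=d+2$ with $V(I^{\sat})$ collinear of Hilbert function $\genHF{d+2}{1}$, and then Lemma~\ref{lem:apolar_contained_in_line_has_special_form}(2) with $k=2$ together with the complete intersection $(\alpha_0^2,\sigma)$ supplied by Lemma~\ref{lem:middle_generator} (this is Lemma~\ref{lem:square_annihilates}) closes the argument; the only packaging difference is that the paper first reduces to $\brr(F)=d+2$ via \cite[Proposition~2.5]{bubu2010} and argues by contradiction, whereas you inline that reduction and the proofs of the two auxiliary lemmas. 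One minor slip: your parenthetical bound $H_{S/I^{\sat}}(k)\ge\min\bigl(\tbinom{k+2}{2},r\bigr)$ is false for special configurations (e.g.\ many points on a conic); the dichotomy you actually need is the classical fact that for a non-collinear degree-$r$ plane scheme the Artinian reduction has Hilbert function $1,2,\dots$ with no internal zeros, hence socle degree at most $r-2$, so $H_{S/I^{\sat}}(t)=r$ for $t\ge r-2$ --- the same fact the paper invokes implicitly with ``then necessarily''.
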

\begin{proof}
By \cite[Proposition~2.5]{bubu2010} we may and do assume that $\brr(F) = d+2$. Suppose that $\crr(F) > d+2$. By Lemma~\ref{lem:apolar_to_wild_is_nonsaturated} there exists $[I]\in \genSat{d+2}{2}$ such that $I\subseteq F^\perp$ and $I^{\sat}_d\neq I_{d}$. Then necessarily, $\Sk/I^{\sat}$ has Hilbert function $\genHF{d+2}{1}$ and it follows from Lemma~\ref{lem:apolar_contained_in_line_has_special_form}(2) that the square of a linear form annihilates $F$. From Lemma~\ref{lem:square_annihilates} we get $\crr(F) \leq 2\lceil\frac{d+1}{2}\rceil \leq d+2$.
\end{proof}

\begin{lemma}\label{lem:special_case_3}
Let $d\geq 4$ and $F_d\in (\Sk^*)_d$ be of the form $F_d = ax_0x_1x_2^{d-2} +
G_d(x_0,x_2) + H_d(x_1,x_2)$ for some $a\in \kappa$ and homogeneous, degree $d$
divided power polynomials $G_d$ and $H_d$.
The cactus rank of $F_d$ is at most $d+3$.
\end{lemma}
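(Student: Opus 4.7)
My approach is to construct an explicit apolar scheme of degree at most $d+3$ by exploiting the additive decomposition $F_d = F_0 + G_d + H_d$, where $F_0 := ax_0x_1x_2^{d-2}$. The key is that if $\Gamma_0, \Gamma_G, \Gamma_H$ are apolar to $F_0, G_d, H_d$ respectively, then $\Gamma := \Gamma_0 \cup \Gamma_G \cup \Gamma_H$ is apolar to $F_d$, since $I(\Gamma)^\perp_d = I(\Gamma_0)^\perp_d + I(\Gamma_G)^\perp_d + I(\Gamma_H)^\perp_d$ contains each summand. The game is therefore to choose the three pieces so that their pairwise intersections at $[0:0:1]$ are as large as possible.

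For $F_0$ I take $\Gamma_0 := V(\alpha_0^2, \alpha_1^2)$, a fat point of length $4$ supported at $[0:0:1]$: its degree-$d$ perp space is spanned by $x_2^d,\, x_0x_2^{d-1},\, x_1x_2^{d-1},\, x_0x_1x_2^{d-2}$, which indeed contains $F_0$. Since $G_d$ is a binary form on the line $L_1 = \{\alpha_1=0\}$, I look for $\sigma \in \Ann(G_d)\cap \kk[\alpha_0,\alpha_2]$ with $\alpha_0^2 \mid \sigma$ of minimal degree, and set $\Gamma_G := V(\alpha_1, \sigma)$. Writing $\sigma = \alpha_0^2 \sigma'$, the condition $\sigma \circ G_d = 0$ becomes $\sigma' \in \Ann(G_d^{(2,0)})$ with $G_d^{(2,0)} := \alpha_0^2\circ G_d$ a binary form of degree $d-2$, so Macaulay's classification of annihilators of binary forms gives $\deg\sigma' \leq \lceil(d-1)/2\rceil$, hence $\deg\sigma \leq \lceil(d+3)/2\rceil$. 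Construct $\Gamma_H := V(\alpha_0,\tau)$ symmetrically.

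The divisibility $\alpha_0^2\mid\sigma$ and $\alpha_1^2\mid\tau$ is engineered precisely so that $\Gamma_G, \Gamma_H$ meet $[0:0:1]$ along the respective lines with multiplicity $\geq 2$. A direct local computation then shows $\deg(\Gamma_0\cap\Gamma_G) = \deg(\Gamma_0\cap\Gamma_H) = 2$, while $\deg(\Gamma_G\cap\Gamma_H) = \deg(\Gamma_0\cap\Gamma_G\cap\Gamma_H) = 1$, all intersections supported at $[0:0:1]$ (since $\sigma\in(\alpha_0)$ and $\tau\in(\alpha_1)$ collapse the sum ideals to $(\alpha_0,\alpha_1)$). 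Inclusion-exclusion gives
\[
  \deg\Gamma = 4 + \deg\sigma + \deg\tau - 2 - 2 - 1 + 1 = \deg\sigma + \deg\tau \leq 2\left\lceil\tfrac{d+3}{2}\right\rceil.
\]

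The main obstacle is the parity of $d$. For $d$ odd this bound is exactly $d+3$, completing the proof. For $d$ even the naive bound is $d+4$, so the extra one must be saved—for instance by allowing $\Gamma_G$ to have multiplicity exactly $2$ while $\Gamma_H$ has multiplicity $3$ (using the refined Macaulay bound on $\Ann(G_d^{(3,0)})$ or $\Ann(H_d^{(0,3)})$), or by exploiting that $G_d^{(2,0)}$ ignores the free parameters $g_0, g_1$, allowing a one-dimensional deformation of $\sigma$ that may meet $\Gamma_H$ in extra length at a second point. Separately handling $a=0$ (where $\crr(F_d) \leq \crr(G_d)+\crr(H_d) \leq d+2$) and a single degenerate case would finish the proof.
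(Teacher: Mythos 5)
Your construction is fine for odd $d$, but the even case is a genuine gap, and it is exactly where the real difficulty of the statement lies. Write $d=2k$. Your $\sigma$ must have the form $\alpha_0^2\sigma'$ with $\sigma'\in\Ann(\alpha_0^2\hook G_d)$, and similarly $\tau=\alpha_1^2\tau'$ with $\tau'\in\Ann(\alpha_1^2\hook H_d)$. In the subcase where \emph{both} $\Ann(\alpha_0^2\hook G_d)_{k-1}=0$ and $\Ann(\alpha_1^2\hook H_d)_{k-1}=0$ (which does occur, e.g.\ for generic binary forms), every admissible choice has $\deg\sigma,\deg\tau\geq k+2$, and since $\Gamma_G\cup\Gamma_H$ already has degree $\deg\sigma+\deg\tau-1\geq d+3$ while $\alpha_0\alpha_1\in I(\Gamma_G)\cap I(\Gamma_H)\setminus(\alpha_0^2,\alpha_1^2)$, adding $\Gamma_0$ strictly increases the degree, so your union has degree at least $d+4$. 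No tweak of multiplicities within your framework helps: the framework insists that each piece be apolar to its own summand of $F_d$, and any scheme apolar to $x_0x_1x_2^{d-2}$ alone already has degree at least $4$ (its middle catalecticant has rank $4$ for $d\geq 4$). The paper's proof escapes precisely by dropping separate apolarity in this subcase: since the map $\theta\mapsto(\alpha_0^2\theta)\hook G_d$ is then bijective from $\kk[\alpha_0,\alpha_2]_{k-1}$ onto $\kkdp[x_0,x_2]_{k-1}$, one can choose \emph{mixed} generators such as $\alpha_0^2\theta_{k-1}-\alpha_0\alpha_1\alpha_2^{k-1}$ and $\alpha_1^2\eta_{k-1}-c\,\alpha_0\alpha_1\alpha_2^{k-1}$, which annihilate $F_d$ as a whole but not its summands, then verify saturation and degree $\leq d+3$ by passing to an initial ideal (with a further degenerate subcase, $\theta_{k-1},\eta_{k-1}\in\kk\cdot\alpha_2^{k-1}$, handled by yet another ideal). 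That idea, and the Gr\"obner/initial-ideal verification replacing your intersection bookkeeping, is what your proposal is missing; the suggestions you sketch (multiplicity $3$ pieces, deforming $\sigma$) do not reach $d+3$ in this subcase.

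Two smaller points. First, inclusion--exclusion for lengths of three schemes is not an identity; fortunately the inequality $\deg(\Gamma_0\cup\Gamma_G\cup\Gamma_H)\leq\sum\deg-\sum_{\mathrm{pairs}}\deg+\deg(\mathrm{triple})$ does hold (compare $(I_A\cap I_B)+I_C\subseteq(I_A+I_C)\cap(I_B+I_C)$), and that direction is all you need, but it should be stated and justified as an inequality. Second, your odd-$d$ computation and the $a=0$ reduction are correct and in fact a mild strengthening of the paper's odd case, which also proceeds by exhibiting a saturated ideal of colength $d+3$ inside $\Ann(F_d)$ but checks it via an initial-ideal computation rather than a union of apolar schemes.
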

\begin{proof}
We have $\crr(G_d) \leq \lceil \frac{d+1}{2} \rceil$ so we may and do assume that $a=1$.
Assume first that $d$ is even and let $d=2k$. We consider two cases:
\begin{enumerate}[label=(\arabic*)]
\item\label{it:special_case_3_part_one} $((\alpha_0^2\hook G_d)^\perp)_{k-1}
    \neq 0$ and $((\alpha_1^2\hook H_d)^\perp)_{k-1} \neq 0$;
\item\label{it:special_case_3_part_two} $((\alpha_0^2\hook G_d)^\perp)_{k-1} =
    0$ or $((\alpha_1^2\hook H_d)^\perp)_{k-1} = 0$.
\end{enumerate}
In the first case, there are $\theta_{k-1}(\alpha_0, \alpha_2)$ and $\eta_{k-1}(\alpha_1, \alpha_2)$ such that $I=(\alpha_0^2\alpha_1, \alpha_0\alpha_1^2, \alpha_0^2\theta_{k-1}, \alpha_1^2\eta_{k-1}) \subseteq F^\perp$. Its initial ideal with respect to any term order is of the form $J=(\alpha_0^2\alpha_1, \alpha_0\alpha_1^2, \alpha_0^2\theta'_{k-1}, \alpha_1^2\eta'_{k-1})$ for some monomials $\theta'\in \kappa[\alpha_0,\alpha_2]_{k-1}$ and $\eta' \in \kappa[\alpha_1, \alpha_2]_{k-1}$. The ideal $J$ is saturated and the Hilbert polynomial of $\Sk/J$ is $2k+2 = d+2$. Therefore, $\crr(F) \leq d+2$ follows.

If \ref{it:special_case_3_part_two} holds, we may and do assume that
$((\alpha_0^2\hook G_d)^\perp)_{k-1} = 0$. The map 
\[
\kappa[\alpha_0,\alpha_2]_{k-1} \to \kadp[x_0,x_2]_{d-k-1} = \kadp[x_0,x_2]_{k-1} \text{ given by }\theta \mapsto (\alpha_0^2\theta) \hook G_d
\]
is injective and thus bijective.
It follows that for some $\theta_{k-1}\in \kappa[\alpha_0,\alpha_2]_{k-1}$, $\eta_{k-1}\in \kappa[\alpha_1, \alpha_2]_{k-1}$ and $c\in \{0,1\}$, the ideal
\[
I = (\alpha_0^2\alpha_1, \alpha_0\alpha_1^2, \alpha_0^2\theta_{k-1} - \alpha_0\alpha_1\alpha_2^{k-1}, \alpha_1^2\eta_{k-1} - c\alpha_0\alpha_1\alpha_2^{k-1})
\]
is contained in $F^\perp$. If $c=0$ or $\theta_{k-1}\notin \kappa\cdot\alpha_2^{k-1}$ or $\eta_{k-1}\notin \kappa\cdot \alpha_2^{k-1}$, then these generators form a Gr\"obner basis with respect to the grevlex order with $\alpha_0>\alpha_1>\alpha_2$ or $\alpha_1 > \alpha_0 > \alpha_2$ and the proof is as in case~\ref{it:special_case_3_part_one}. Therefore, we are left with the case that 
$ 
I = (\alpha_0^2\alpha_1, \alpha_0\alpha_1^2, \alpha_0^2\alpha_2^{k-1} -\alpha_0\alpha_1\alpha_2^{k-1}, c'\alpha_1^2\alpha_2^{k-1}-\alpha_0\alpha_1\alpha_2^{k-1}) \subseteq F^\perp
$ for some nonzero $c'$. This ideal contains $I'=(\alpha_0^2\alpha_1, \alpha_0\alpha_1^2, \alpha_0^2\alpha_2^{k-1} -c'\alpha_1^2\alpha_2^{k-1}) = (\alpha_0^2\alpha_1, \alpha_0\alpha_1^2, \alpha_0^2\alpha_2^{k-1} -c'\alpha_1^2\alpha_2^{k-1}, \alpha_1^3\alpha_2^{k-1})$. By taking an initial ideal, one may verify that $I'$ is saturated and $\Sk/I'$ has Hilbert polynomial $2k+3 = d+3$.

Now assume that $d$ is odd and let $d=2k+1$. Then for dimension reasons, there are $\theta_{k}\in \kappa[\alpha_0,\alpha_2]_k$ and $\eta_{k}\in \kappa[\alpha_1, \alpha_2]_k$ such that $I=(\alpha_0^2\alpha_1, \alpha_0\alpha_1^2, \alpha_0^2\theta_{k}, \alpha_1^2\eta_k)\subseteq F^\perp$. By taking an initial ideal, one may verify that $I$ is saturated and $\Sk/I$ has Hilbert polynomial $2k+4=d+3$.
\end{proof}

\begin{lemma}\label{lem:special_case_4}
Let $d$ be a positive integer and $F_d\in (\Sk^*)_d$ be of the form $F_d = ax_0x_2^{d-1} + G_d(x_0,x_1) + H_d(x_1,x_2)$
for some $a\in \kappa$ and homogeneous, degree $d$ divided power polynomials $G_d$ and $H_d$.
The cactus rank of $F_d$ is at most $d+3$.
\end{lemma}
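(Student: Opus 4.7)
The plan is to parallel the proof of Lemma~\ref{lem:special_case_3}: construct an explicit ideal $I\subseteq\Ann(F_d)$ whose initial ideal under a suitable monomial order is monomial, saturated, and of colength at most $d+3$, and hence cuts out a zero-dimensional apolar subscheme of $F_d$ of that degree.

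First I would reduce. If $a=0$, then $F_d=G_d(x_0,x_1)+H_d(x_1,x_2)$ splits as a sum of binary forms sharing only the variable $x_1$, and the union of their minimal apolar subschemes on the coordinate lines $V(\alpha_2)$ and $V(\alpha_0)$ yields an apolar subscheme of degree at most $2\lceil(d+1)/2\rceil\le d+2$; so assume $a=1$. Direct contraction verifies that $\alpha_0^2\alpha_2$, $\alpha_0\alpha_1\alpha_2$ and $\alpha_0\alpha_2^d$ all lie in $\Ann(F_d)$. Setting $P:=\alpha_0^2\hook G_d\in\kk[x_0,x_1]_{d-2}$ and $Q:=\alpha_1\alpha_2\hook H_d\in\kk[x_1,x_2]_{d-2}$, one has $\alpha_0^2\theta\in\Ann(F_d)$ for every $\theta\in\Ann(P)\cap\kk[\alpha_0,\alpha_1]$ and $\alpha_1\alpha_2\eta\in\Ann(F_d)$ for every $\eta\in\Ann(Q)\cap\kk[\alpha_1,\alpha_2]$. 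If $P=0$ then $G_d$ is linear in $x_0$, so $\alpha_0^2\in\Ann(F_d)$ and Lemma~\ref{lem:square_annihilates} already gives $\crr(F_d)\le d+2$; the analogous shortcut handles the case $Q=0$. Hence assume both $P$ and $Q$ are nonzero.

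Then I would split by the parity of $d$. For $d=2k$ even, I would distinguish whether both $\Ann(P)_{k-1}$ and $\Ann(Q)_{k-1}$ are nonzero. In the \emph{good} subcase, pick nonzero $\theta_{k-1},\eta_{k-1}$ in these spaces and set
\[
I=(\alpha_0^2\alpha_2,\ \alpha_0\alpha_1\alpha_2,\ \alpha_0^2\theta_{k-1},\ \alpha_1\alpha_2\eta_{k-1}),
\]
whose grevlex initial ideal with $\alpha_0>\alpha_1>\alpha_2$ is a monomial ideal whose primary decomposition confirms saturation and colength $d+2$. In the \emph{bad} subcase, one of $\Ann(P)_{k-1},\Ann(Q)_{k-1}$ vanishes, so the corresponding contraction map between two $k$-dimensional spaces is bijective. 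Using this bijectivity together with the cross-term contractions $\alpha_0\alpha_2^j\hook F_d=x_2^{d-1-j}$, I would build corrected generators of the form $\alpha_0^2\theta_{k-1}-\lambda\alpha_0\alpha_2^k\in\Ann(F_d)$ (and symmetrically on the $H_d$-side), with possibly one additional generator, arranging the leading terms so that the initial ideal remains saturated with colength at most $d+3$. For $d=2k+1$ odd, a dimension count forces $\Ann(P)_k,\Ann(Q)_k\ne 0$, so the good-subcase construction with $k$ replacing $k-1$ immediately yields colength $d+3$.

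The hard part will be the bad subcase in the even parity. In Lemma~\ref{lem:special_case_3} the trivariate cross term $x_0x_1x_2^{d-2}$ was reachable by the single operator $\alpha_0\alpha_1\alpha_2^{k-1}$, whose image naturally matched the $\kk[\alpha_0,\alpha_2]$- and $\kk[\alpha_1,\alpha_2]$-contractions; here the cross term $x_0x_2^{d-1}$ produces only contractions in $\kk[x_2]$ while $P$ and $Q$ live in $\kk[x_0,x_1]$ and $\kk[x_1,x_2]$, so the ``matching'' of target spaces is no longer direct and requires a careful combination of several operators before a correction lifting back to $\Ann(F_d)$ can be found. The remaining verifications --- that the final monomial initial ideal is saturated and of the claimed colength --- are routine monomial bookkeeping, carried out exactly as in the proof of Lemma~\ref{lem:special_case_3}.
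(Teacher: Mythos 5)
The paper's proof is substantially simpler and takes a different route: it observes that cactus rank is subadditive, and that $G_d(x_0,x_1)$ is a binary form whose cactus rank is at most $e=\lceil\frac{d+1}{2}\rceil$, so it suffices to bound $\crr\bigl(ax_0x_2^{d-1}+H_d(x_1,x_2)\bigr)\le e+1$. The latter is easy because $\alpha_0^2$ and $\alpha_0\alpha_1$ already annihilate this piece, and one more degree-$e$ generator of the form $\alpha_1\theta-a\alpha_0\alpha_2^{e-1}$, with $\theta\in\kk[\alpha_1,\alpha_2]_{e-1}$, produces a saturated ideal of colength $e+1$. You never use this split: after setting $a=1$ you work with the full $F_d$, trying to mimic the case analysis of Lemma~\ref{lem:special_case_3}, which is much heavier.

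More importantly, your proposal has a genuine unresolved gap in what you call the ``bad subcase'' (even $d$, one of $\Ann(P)_{k-1},\Ann(Q)_{k-1}$ trivial). You correctly observe the obstruction yourself: the correction mechanism from Lemma~\ref{lem:special_case_3} requires the cross term's contraction to land in the same binary-variable space as the image of the bijective map, and in that Lemma this works because $\alpha_0\alpha_1\alpha_2^{k-1}\hook F_d=x_2^{k-1}\in\kk[x_0,x_2]_{k-1}$ lies in the target of $\theta\mapsto\alpha_0^2\theta\hook G_d(x_0,x_2)$. Here $G_d=G_d(x_0,x_1)$, so that target is $\kk[x_0,x_1]_{k-1}$, while the cross term $x_0x_2^{d-1}$ contributes $x_2^{k-1}$, which lies in a complementary space; the proposed corrected generator $\alpha_0^2\theta_{k-1}-\lambda\alpha_0\alpha_2^{k}$ therefore cannot annihilate $F_d$ unless both contributions vanish. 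You gesture at resolving this by ``a careful combination of several operators'' but give no construction, so the proof is incomplete precisely where it is hardest. You should notice that the variables $x_0,x_2$ are decoupled in $G_d$ and $H_d$, which is exactly what makes the paper's splitting trick applicable and renders the Lemma~\ref{lem:special_case_3} machinery unnecessary.

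Your treatment of the $a=0$ case (union of apolar subschemes on two coordinate lines, giving degree at most $2e\le d+2$) is correct, and it is, in fact, the same subadditivity idea the paper uses; you just stop short of applying it to the general case.
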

\begin{proof}
    Let $e = \lceil \frac{d+1}{2} \rceil$.
The cactus rank of $G_d$ is at most $e$, so it suffices to show that the
cactus rank of $x_0x_2^{d-1}+H_d(x_1,x_2)$ is at most $e+1$.
For some $\theta \in \kappa[\alpha_1,\alpha_2]_{e-1}$ and $a\in \{0,1\}$, we
have $I=(\alpha_0^2, \alpha_0\alpha_1, \alpha_1\theta-a\alpha_0\alpha_2^{e-1})
\subseteq (x_0x_2^{d-1}+H_d(x_1,x_2))^\perp$. The ideal $I$ is saturated and
$\Sk/I$ has Hilbert polynomial $e+1$.
\end{proof}

\begin{proposition}\label{prop:no_wild_polynomials}
Assume that $\brr(F) \leq d+3$ for some $F\in (\Sk^*)_d$. If the characteristic of $\kappa$ is zero or larger than $d$, then $\crr(F) = \srr(F) = \brr(F)$. In particular, $F$ is not wild.
\end{proposition}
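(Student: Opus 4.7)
The plan is to reduce the proposition to the bound $\crr(F)\leq d+3$, since coupled with $\brr(F)\leq d+3$ and the argument scheme from the proof of Lemma~\ref{lem:d+2_not_wild} (which relates cactus, smoothable and border ranks via border apolarity) this forces $\crr(F)=\srr(F)=\brr(F)$, so $F$ is not wild. By \cite[Proposition~2.5]{bubu2010} we may assume $\brr(F)=d+3$; the cases $d\leq 2$ are handled by Lemma~\ref{lem:d+2_not_wild}, so assume $d\geq 3$ and suppose for contradiction that $\crr(F)>d+3$. Lemma~\ref{lem:apolar_to_wild_is_nonsaturated} yields $[I]\in \genSat{d+3}{2}$ with $I\subseteq \Ann(F)$ and $I_d\neq I^{\sat}_d$, so $H_{S/I^{\sat}}(d)<d+3$.

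An analysis via Macaulay's growth shows that for $d\geq 3$ the only admissible Hilbert functions of a degree $d+3$ saturated zero-dimensional ideal in three variables with $H(d)<d+3$ are $\genHF{d+3}{1}$ (Case~A: $d+3$ collinear points) and $f_{d+3}$ (Case~B: $d+2$ collinear points plus one off-line point). In Case~B, Lemma~\ref{lem:apolar_containded_in_f} gives, after a linear change of variables, either (B.1) $(\alpha_0^2\alpha_1,\alpha_0^2\alpha_2)\subseteq \Ann(F)$ or (B.2) $(\alpha_0^3,\alpha_0^2\alpha_1)\subseteq \Ann(F)$. In Case~A, Lemma~\ref{lem:generators_in_degree_3} produces $(\alpha_0^2\ell,\alpha_0\alpha_1\ell)\subseteq \Ann(F)$ for some linear form $\ell$, and a further coordinate change preserving the line $V(\alpha_0)$ reduces $\ell$ to one of $\alpha_0,\alpha_1,\alpha_2$ (up to scalar).

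In every sub-case other than (B.2) and (A with $\ell=\alpha_0$) the derived containment forces $F$ into a shape treated by Lemma~\ref{lem:special_case_3} or~\ref{lem:special_case_4}, yielding $\crr(F)\leq d+3$ directly. In Case~B.1 the containment forces $F=\lambda x_0^d + x_0H(x_1,x_2)+G(x_1,x_2)$ with $\alpha_0^2$ annihilating $x_0H+G$; writing $F=\lambda x_0^d + F'$, subadditivity of cactus rank combined with Lemma~\ref{lem:square_annihilates} gives $\crr(F)\leq 1+2\lceil(d+1)/2\rceil\leq d+3$, again contradicting our assumption.

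The genuinely delicate sub-case is (A with $\ell=\alpha_0$) which coincides with (B.2): here $(\alpha_0^3,\alpha_0^2\alpha_1)\subseteq \Ann(F)$ forces $F=\mu x_0^2 x_2^{d-2}+x_0F_1(x_1,x_2)+F_0(x_1,x_2)$, and if $\mu=0$ then $\alpha_0^2\in\Ann(F)$ and Lemma~\ref{lem:square_annihilates} finishes. For $\mu\neq 0$ I plan to exploit the $\alpha_1\leftrightarrow\alpha_2$ symmetry of the hypotheses by re-applying Lemma~\ref{lem:generators_in_degree_3} with these roles swapped, producing a second linear form $\ell'$ with $(\alpha_0^2\ell',\alpha_0\alpha_2\ell')\subseteq \Ann(F)$. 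The choices $\ell'\in\kk\alpha_0$ and $\ell'\in\kk\alpha_0+\kk\alpha_2$ are excluded by the nonvanishing $\alpha_0^2\alpha_2\hook F=2\mu(d-2)x_2^{d-3}\neq 0$, so $\ell'$ has a nonzero $\alpha_1$-component; the resulting extra relation constrains $F_1$ to the form $ax_1^{d-1}+bx_2^{d-1}+\nu x_1x_2^{d-2}$. Reducing the resulting $F$ to a shape covered by Lemma~\ref{lem:special_case_3} -- possibly after an explicit apolar construction that absorbs a residual $ax_0x_1^{d-1}$ contribution -- then gives $\crr(F)\leq d+3$; this final reduction is where I expect the main technical work to lie.
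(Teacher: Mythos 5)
Your overall framework is correct and matches the paper: reduce to $\brr(F)=d+3$, suppose $\crr(F)>d+3$, invoke Lemma~\ref{lem:apolar_to_wild_is_nonsaturated} to get $[I]\in\genSat{d+3}{2}$ with $I\subseteq\Ann(F)$ and $I_d\neq I^{\sat}_d$, split into the two Hilbert functions $\genHF{d+3}{1}$ and $f_{d+3}$, feed them into Lemmas~\ref{lem:apolar_containded_in_f} and~\ref{lem:generators_in_degree_3}, and dispose of most sub-cases by Lemmas~\ref{lem:special_case_3} and~\ref{lem:special_case_4}. Your treatment of your case B.1 via ``$F=\lambda x_0^d+F'$, subadditivity of cactus rank, Lemma~\ref{lem:square_annihilates}'' is a legitimate and slightly slicker alternative to the paper's uniform device (produce $\eta_e\in\Ann(F)_e$ via Lemma~\ref{lem:middle_generator}, pass to an initial ideal, and read off a saturated degree $\leq 2e+1$ apolar ideal). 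But two things are genuinely wrong or missing.

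First, the degree reduction is off. You claim ``the cases $d\leq 2$ are handled by Lemma~\ref{lem:d+2_not_wild}, so assume $d\geq 3$.'' Lemma~\ref{lem:d+2_not_wild} has hypothesis $\brr(F)\leq d+2$, which is not what you have after you have normalised $\brr(F)=d+3$; it says nothing about small $d$. What you actually need is the Alexander--Hirschowitz theorem, which shows that for ternary forms of degree $d\leq 5$ no form has border rank as large as $d+3$, whence $d\geq 6$. This is not cosmetic: Lemma~\ref{lem:special_case_3} requires $d\geq 4$, and the arguments that convert a nonvanishing ``$\alpha_0^2\hook F$ is proportional to $x_2^{d-2}$'' into $\alpha_0^2\in\Ann(F)$ need $e=\lceil(d+1)/2\rceil\leq d-2$, i.e. $d\geq 5$. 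So your assumed range $d\geq 3$ leaves genuine holes.

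Second, and more seriously, your sub-case (B.2), equivalently ``(A with $\ell=\alpha_0$)'', i.e.\ $(\alpha_0^3,\alpha_0^2\alpha_1)\subseteq\Ann(F)$, is not proved. Your plan to ``re-apply Lemma~\ref{lem:generators_in_degree_3} with the roles of $\alpha_1,\alpha_2$ swapped'' is not sound: the lemma's conclusion ``$(\alpha_0^2\ell,\alpha_0\alpha_1\ell)$ after a linear change of variables'' already consumes the freedom in the coordinates, and re-running it would give a \emph{different} normalisation, not a second relation in the \emph{same} coordinates. The form $F=\mu x_0^2x_2^{d-2}+x_0F_1(x_1,x_2)+F_0(x_1,x_2)$ you derive also does not match the shapes in Lemmas~\ref{lem:special_case_3} or~\ref{lem:special_case_4}, which is precisely why you flag a ``residual $ax_0x_1^{d-1}$ contribution'' and admit ``this final reduction is where I expect the main technical work to lie.'' The paper closes this case directly: by Lemma~\ref{lem:middle_generator} there is $\eta_e\in\Ann(F)_e$ not divisible by $\alpha_0$; reducing $\eta_e$ modulo $(\alpha_0^3,\alpha_0^2\alpha_1)$ gives $\eta_e=a\alpha_2^e+\alpha_1\zeta_{e-1}+b\alpha_0^2\alpha_2^{e-2}+\alpha_0\zeta'_{e-1}$; if $a\neq 0$ then $\alpha_0^2\alpha_2^e\in\Ann(F)$ and hence $\alpha_0^2\in\Ann(F)$ (here $e\leq d-2$ is used), so Lemma~\ref{lem:square_annihilates} gives $\crr(F)\leq d+2$; if $a=0$ the ideal $(\alpha_0^3,\alpha_0^2\alpha_1,\eta_e)$ has initial ideal $(\alpha_0^3,\alpha_0^2\alpha_1,\alpha_1M)$ which is saturated of degree $2e+1\leq d+3$. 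You should replace your speculative symmetry plan with this argument (or an equivalent one producing a saturated apolar ideal of degree at most $d+3$) to close the gap.
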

\begin{proof}
By Lemma~\ref{lem:d+2_not_wild} we may and do assume that $\brr(F) = d+3$.
By the Alexander-Hirschowitz theorem, see \cite{AlexHirsch} or
\cite[Theorem~1.61]{iakanev}, for $d\leq 5$ such a high border rank does not
occur,
hence we obtain $d\geq 6$.
Suppose that $\crr(F) > d+3$. By Lemma~\ref{lem:apolar_to_wild_is_nonsaturated} there  is an ideal $I\subseteq F^\perp$ such that $I^{\sat}_d\neq I_d$ and $[I]\in \genSat{d+3}{2}$. There are two possibilities for the Hilbert function of $\Sk/I^{\sat}$. It is either $\genHF{d+3}{1}$ or $f_{d+3}$. It follows from Lemmas~\ref{lem:apolar_containded_in_f} and \ref{lem:generators_in_degree_3} that we may assume that one of the following holds
\begin{enumerate}[label=(\arabic*)]
\item\label{it:wild_polynomials_part_one} $(\alpha_0^2\alpha_1, \alpha_0^2\alpha_2)\subseteq F^\perp$;
\item\label{it:wild_polynomials_part_two} $(\alpha_0^3, \alpha_0^2\alpha_1)\subseteq F^\perp$;
\item\label{it:wild_polynomials_part_three} $(\alpha_0^2\alpha_1, \alpha_0\alpha_1^2) \subseteq F^\perp$;
\item\label{it:wild_polynomials_part_four} $(\alpha_0^2\alpha_2, \alpha_0\alpha_1\alpha_2)\subseteq F^\perp$.
\end{enumerate}
Let $e = \lceil\frac{d+1}{2}\rceil$.  Let $<$ be the grevlex monomial order with $\alpha_0<\alpha_1<\alpha_2$.

Assume that \ref{it:wild_polynomials_part_one} holds. Let $\eta_e\in (F^\perp)_e$ be not divisible by $\alpha_0$ (see Lemma~\ref{lem:middle_generator}). We choose one of the form  $\eta_e = \zeta_e + a\alpha_0^e + \alpha_0\alpha_1\zeta_{e-2} + \alpha_0\alpha_2\zeta'_{e-2}$, where $a\in \kappa$, $\zeta_e \in \kappa[\alpha_1,\alpha_2]_{e}\setminus \{0\}$ and $\zeta_{e-2}, \zeta'_{e-2}\in \kappa[\alpha_1,\alpha_2]_{e-2}$. Furthermore, if $a\neq 0$, then $\alpha_0^{e+2} \in F^\perp$. It follows that $\alpha_0^2\in F^\perp$, so by Lemma~\ref{lem:square_annihilates} the cactus rank of $F$ is at most $d+2$.
Let $J = \operatorname{in}_<(\alpha_0^2\alpha_1, \alpha_0^2\alpha_2, \eta_e)$. Then $J= (\alpha_0^2\alpha_1, \alpha_0^2\alpha_2, M)$ where $M$ is a monomial in $\kappa[\alpha_1,\alpha_2]_e$. It follows that $\Sk/J$ has Hilbert polynomial $2e+1\leq d+3$ and $J$ is saturated.

Assume that \ref{it:wild_polynomials_part_two} holds. Let $\eta_e\in (F^\perp)_e$ be not divisible by $\alpha_0$ (see Lemma~\ref{lem:middle_generator}). We choose one of the form $\eta_e = a\alpha_2^e + \alpha_1\zeta_{e-1} + b\alpha_0^2\alpha_2^{e-2} + \alpha_0\zeta'_{e-1}$, where $a,b\in \kappa$, $\zeta_{e-1}, \zeta_{e-1}' \in \kappa[\alpha_1,\alpha_2]_{e-1}$ and $a\alpha_2^2+\alpha_1\zeta_{e-1} \neq 0$. If $a\neq 0$, then $\alpha_0^2\alpha_2^e \in F^\perp$ and it follows that $\alpha_0^2\in F^\perp$. Then by Lemma~\ref{lem:square_annihilates} we get $\crr(F) \leq d+2$. Therefore, $J=\operatorname{in}_<(\alpha_0^3, \alpha_0^2\alpha_1, \eta_e) = (\alpha_0^3, \alpha_0^2\alpha_1, \alpha_1M)$ for a monomial $M\in \kappa[\alpha_1,\alpha_2]_{e-1}$. It follows that $J$ is saturated and $\Sk/J$ has Hilbert polynomial $2e+1 \leq d+3$.

If \ref{it:wild_polynomials_part_three} or \ref{it:wild_polynomials_part_four} hold, then $\crr(F) \leq d+3$ by Lemma~\ref{lem:special_case_3} or \ref{lem:special_case_4}, respectively.
\end{proof}

\appendix

\section{Obstruction theory of flag Hilbert schemes}
Throughout this appendix, we keep the assumptions from~\S\ref{ssec:setup}, in
particular we fix
$\kk \to \kappa$, where $\kappa$ is a field, and
take homogeneous ideals $K \subseteq J$ in the
$\AA$-graded algebra $\Sk$. Arguing as in for example~\cite[Theorem
6.4.5]{fantechi_et_al_fundamental_ag}
or~\cite[Proposition~4.1]{Jelisiejew__Elementary}, we get that the scheme
$\Hilb_K$ has obstruction theory with obstruction group $\Ext^1_{\Sk}(K,
\frac{\Sk}{K})_{\zerodeg}$.

In the case $\kk= \kappa$, the basic results of deformation theory are phrased
in our language in Chapter~5 of~\cite{fantechi_et_al_fundamental_ag}
or~\cite{Fantechi_Manetti}. In the general case, it is harder to find the
reference for the following well-known result, so we sketch a proof.

\begin{lemma}[Main Theorem of Obstruction
    Calculus]\label{ref:obstructionCalculus:lem}
    Let $Y$ be a Noetherian scheme.
    Suppose that $f\colon (X, x)\to (Y, y)$ is a finitely-presented morphism
    of pointed schemes that yields an isomorphism on residue fields $\kappa(y)
    \simeq \kappa(x)$.
    Assume that $(X, x)$ and $(Y,y)$ admit obstruction
    theories, that $f$ is a map of obstruction theories and that
    \begin{enumerate}
        \item $df|_x\colon T_x X \to T_y Y$ is surjective,
        \item the map of obstruction groups induced by $f$ is injective.
    \end{enumerate}
    Then $f$ is smooth at $x$.
\end{lemma}
\begin{proof}
    The surjectivity of $df|_x$ and injectivity of map on obstruction groups
    imply that for any small extension $B\to A\to 0$ of Artinian local rings
    and any diagram
    \[
        \begin{tikzcd}
            (\Spec(A), *)\ar[d, hook, "cl"] \ar[r] & (X,x)\ar[d, "f"]\\
            (\Spec(B), *) \ar[r]  &(Y, y)
        \end{tikzcd}
    \]
    of pointed schemes, there is a lift $(\Spec(B), *)\to (X, x)$. Having
    this, the
    claim follows
    from~\cite[Proposition~1.1, Remark 1.3(3)]{Artin_theorems_of_representability}.
    For an alternative, one can argue using~\cite[Proposition~(17.5.3)]{ega4-4}.
\end{proof}

We pass to constructing the obstruction space for the flag Hilbert scheme.
As in~\S\ref{ssec:maindiagram}, we
have natural maps $\Hom_{\Sk}(K, \Sk/K), \Hom_{\Sk}(J, \Sk/J)\to \Hom_{\Sk}(K,
\Sk/J)$. Denote
their sum as
\[
    \psi\colon \Hom_{\Sk}(K, \Sk/K) \oplus \Hom_{\Sk}(J, \Sk/J)\to
    \Hom_{\Sk}(K, \Sk/J).
\]
Recall that $\Hilb_{K\subseteq J}\to \Spec(\kk)$ is the flag Hilbert scheme
and that $K\subseteq J\subseteq \Sk$ yields its $\Spec(\kappa)$-point. Below
we restrict to $\kk$-schemes. Therefore, $\kk$ becomes a base scheme and
does not appear in the argument, in particular it is not important that it may
not be a field.
\begin{theorem}\label{ref:obstructionAtFlag:thm}
    Suppose that $\psi_{\zerodeg}$ is surjective. Then the pointed $\kk$-scheme
    $\Hilb_{K\subseteq J}$ has an obstruction theory with
    obstruction group $\ObFlag$ and the natural projections
    $\ObFlag\to \Ext^1_{\Sk}(K, \frac{\Sk}{K})_{\zerodeg}$, $\ObFlag\to
    \Ext^1_{\Sk}(J,
    \frac{\Sk}{J})_{\zerodeg}$ are
    maps of obstruction theories for the maps $\pr_K$ and $\pr_J$, respectively.
\end{theorem}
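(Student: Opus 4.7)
The approach is to verify the definition of an obstruction theory directly. Fix a small extension $0\to M\to A'\to A\to 0$ of Artinian local $\kk$-algebras and an $A$-valued point of $\Hilb_{K\subseteq J}$, i.e., an $A$-flat pair of homogeneous ideals $K_A\subseteq J_A\subseteq S\otimes_\kk A$ deforming $K\subseteq J$. I would construct a class $\operatorname{ob}\in \ObFlag\otimes_\kk M$ whose vanishing is equivalent to the existence of a lift $(K_{A'}\subseteq J_{A'})$ over $A'$, and then check naturality.

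First, invoke the obstruction theories of the individual multigraded Hilbert schemes at $[K]$ and $[J]$, with obstruction spaces $\Ext^1_S(K, S/K)_{\zerodeg}$ and $\Ext^1_S(J, S/J)_{\zerodeg}$ respectively (see \cite[Theorem~4.10]{Jelisiejew__Elementary}). These produce classes $\operatorname{ob}_K$ and $\operatorname{ob}_J$ governing the lifts of $K_A$ and $J_A$ taken separately. The central point is to show that the images of $\operatorname{ob}_K$ and $\operatorname{ob}_J$ in $\Ext^1_S(K, S/J)_{\zerodeg}\otimes M$ coincide, so that $(\operatorname{ob}_K, \operatorname{ob}_J)$ defines an element of the pullback $\ObFlag\otimes M$. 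At the cocycle level I would choose $A'$-module lifts $\widetilde K\subseteq \widetilde J\subseteq S\otimes_\kk A'$ of $K_A\subseteq J_A$ (by lifting a compatible set of generators) and compute both obstructions from them. Since $\widetilde K\subseteq \widetilde J$, both images in $\Ext^1_S(K, S/J)_{\zerodeg}\otimes M$ represent the obstruction to lifting the zero map $K\to S/J$ to an $A'$-linear map $K_{A'}\to S_{A'}/\widetilde J$, and hence agree.

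For the equivalence, one direction is immediate by projection. For the converse, suppose $\operatorname{ob}=0$, so $\operatorname{ob}_K=0$ and $\operatorname{ob}_J=0$, and choose arbitrary lifts $K^{(1)}$ and $J^{(1)}$ of $K_A$ and $J_A$. These lift-sets are torsors under $\Hom_S(K, S/K)_{\zerodeg}\otimes M$ and $\Hom_S(J, S/J)_{\zerodeg}\otimes M$ respectively. The composition $K^{(1)}\hookrightarrow S_{A'}\onto S_{A'}/J^{(1)}$ vanishes modulo $M$ and, by $A'$-flatness of $J^{(1)}$, factors through a class $\delta\in \Hom_S(K, S/J)_{\zerodeg}\otimes M$. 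Modifying $K^{(1)}$ by $\alpha$ and $J^{(1)}$ by $\beta$ alters $\delta$ by the image of $(\alpha, -\beta)$ under $\psi_{\zerodeg}\otimes \id_M$, up to a sign. Surjectivity of $\psi_{\zerodeg}$ therefore allows us to arrange $\delta=0$, yielding the desired compatible lift.

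Finally, the projections $\ObFlag\to \Ext^1_S(K, S/K)_{\zerodeg}$ and $\ObFlag\to \Ext^1_S(J, S/J)_{\zerodeg}$ send $(\operatorname{ob}_K, \operatorname{ob}_J)$ to the component obstructions by construction, and so are maps of obstruction theories for $\pr_K$ and $\pr_J$. The main obstacle will be the careful cocycle-level verification of the agreement in the second paragraph, together with pinning down signs so that the torsor action in the third paragraph matches $\psi_{\zerodeg}$ exactly; once this bookkeeping is settled, everything else is a routine unwinding of definitions, which is why we delegate the actual write-up to an appendix.
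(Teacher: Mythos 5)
Your proposal is correct and follows essentially the same route as the paper's appendix proof: obtain the component obstructions from the obstruction theories of $\Hilb_K$ and $\Hilb_J$, check their images agree in $\Ext^1_S(K,S/J)_{\zerodeg}$ so the pair lands in the pullback $\ObFlag$, and, when the obstruction vanishes, use surjectivity of $\psi_{\zerodeg}$ to correct independently chosen lifts by the torsor action so that the difference class $\delta\in\Hom_S(K,S/J)_{\zerodeg}\otimes M$ becomes zero, forcing the lifts to be nested. The paper carries out exactly this cocycle-level bookkeeping (its $\bar f$ and $\delta_s$ are your $\delta$), so no substantive difference remains beyond writing out the details you defer.
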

This theorem partially generalizes~\cite[Theorem~4.10]{Jelisiejew__Elementary}, which
has stronger assumptions and is formulated only in the case $\kk = \kappa$,
however considers the case ``$\geq \zerodeg$'' rather that degree zero.
\begin{proof}
    \def\II{\mathcal{I}}%
    \def\IIbar{\overline{\mathcal{I}}}%
    \def\QQ{\mathcal{Q}}%
    \def\QQbar{\overline{\mathcal{Q}}}%

    A large part of the proof follows as in
    Theorem~{\cite[Theorem~4.10]{Jelisiejew__Elementary}}. We only sketch this
    part, referring the reader to the above mentioned proof for details.
    The obstruction theories of $\Hilb_K$ and $\Hilb_J$ yield an obstruction theory for
    $\Hilb_K\times \Hilb_J$ and by direct check, the obstruction at a point of
    $\Hilb_{K\subseteq J}$ lies in the subgroup $\ObFlag$, compare~\cite[Theorem~4.10]{Jelisiejew__Elementary}. None of this uses
    the assumption that $\psi_{\zerodeg}$ is surjective.

    What is left to prove is that if the obstruction vanishes, an extension
    exists. Let us explain this more precisely: we consider a small extension $\Spec(A)\into
    \Spec(B)$ of local Artinian $\kk$-algebras with residue field $\kappa$ and
    a map $\Spec(A)\to \Hilb_{K\subseteq J}$.
    If the obstruction
    vanishes, then we want to prove that there exists a map from $\Spec(B)$ to $\Hilb_{K\subseteq
    J}$ extending a given one from $\Spec(A)$.
    A small extension can be composed of the ones when
    $\ker(B\to A)$ has length one (so it is annihilated by the maximal ideal
    of $B$), so we assume that our extension is
    \[
        0\to \kappa \to B\to A\to 0.
    \]
    We denote by $\varepsilon\in B$ the image of $1$ in $B$.
    By vanishing of the
    obstruction for $\Hilb_K\times \Hilb_J$, there exists a commutative
    diagram
    \[
        \begin{tikzcd}
            \Spec(A) \ar[r]\ar[d, "cl"] & \Hilb_{K\subseteq J}\ar[d, "cl"]\\
            \Spec(B) \ar[r] & \Hilb_K \times \Hilb_J.
        \end{tikzcd}
    \]
    We are to prove that there \emph{exists} such a diagram, with $\Spec(B)\to
    \Hilb_K\times \Hilb_J$ actually factoring through $\Hilb_{K\subseteq J}$.
    We stress the word \emph{exists}: in the course of the proof we will
    modify the map $\Spec(B)\to \Hilb_{K} \times \Hilb_J$ in the diagram
    above.

    \begin{figure}[h]
        \centering
        \begin{tikzcd}
            & K \ar[rr, hook]\ar[dd, hook] && \Sk\ar[rr, two heads]\ar[dd, hook,
            "\cdot\varepsilon" near start] && \Sk/K\ar[dd, hook]\\
            J \ar[rr, crossing over, hook]\ar[from=ru, hook] && \Sk  \ar[from=ru, equal]\ar[rr, crossing over, two heads] && \Sk/J
            \ar[from=ru, two heads] & \\
            & \II_K \ar[rr, hook]\ar[dd, two heads] && S_B\ar[rr, two heads]\ar[dd, two heads] &&
            \QQ_K\ar[dd, two heads]\\
            \II_J \ar[rr, crossing over, hook]\ar[from=uu, crossing over,
            hook]
            && S_B  \ar[ru, equal]\ar[rr, crossing over, two
            heads]\ar[from=uu, crossing over, hook, "\cdot\varepsilon" near start] &&
            \QQ_J\ar[from=uu, crossing over, hook]
            & \\
            & \IIbar_K \ar[rr, hook] && S_A\ar[rr, two heads] && \QQbar_K\\
            \IIbar_J\ar[from=uu, crossing over, two heads] \ar[rr,hook]\ar[from=ru, hook]
            && S_A \ar[ru, equal]\ar[rr, two heads]\ar[from=uu, crossing over, two heads]
            && \QQbar_J
            \ar[from=ru, two heads]\ar[from=uu, crossing over, two heads] & 
        \end{tikzcd}
        \caption{Deformations of $K$ and $J$, but not yet a deformation of
        a pair $K\subseteq J$.}
        \label{eq:diagDef}
    \end{figure}

    Let $\II_K, \II_J\subseteq S_B$ be the ideals of the deformations
    corresponding to $\Spec(B)\to \Hilb_K\times \Hilb_J$. Let $\star\in\{K,
    J\}$ and let
    $\IIbar_{\star} := \II_{\star}\tensor_B A$ be the restrictions to
    $A$. By assumption we have $\IIbar_K\subseteq \IIbar_J$.
    Let $\QQ_{\star} := S_B/\II_{\star}$ be the quotients and
    $\QQbar_{\star} = S_A/\IIbar_{\star}  \simeq \QQ_{\star}\tensor_B
    A$. We have the commutative diagram as in Figure~\ref{eq:diagDef}.

    From its middle horizontal plane we obtain a homomorphism
    $f\colon \II_K\to S_B\to \QQ_J$. The composition of $f$ with $\QQ_J\onto
    \QQbar_J$ is zero by a diagram chase, since $\IIbar_K\subseteq \IIbar_J$.
    Thus $f$ lifts to $\II_K\to \varepsilon(\Sk/J)$ and hence gives a
    homomorphism
    $\bar{f}\in \Hom_{\Sk}(K, \Sk/J)_{\zerodeg}$. By assumption on
    $\psi_{\zerodeg}$, we can write $\bar{f} = -\overline{\varphi_K} +
    \overline{\varphi_J}$, where $\varphi_K\in \Hom_{\Sk}(K, \Sk/K)_{\zerodeg}$ and
    $\varphi_J\in \Hom_{\Sk}(J, \Sk/J)_{\zerodeg}$.

    Now, recall that $\Spec(B)\to \Hilb_K\times \Hilb_J$ was
    only \emph{an} extension of
    $\Spec(A)$. The other possible extensions admit an action of the vector space $\Hom_{\Sk}(K,
    \Sk/K)_{\zerodeg}\oplus\Hom_{\Sk}(J, \Sk/J)_{\zerodeg}$
    by the argument of~\cite[Theorem~6.4.5]{fantechi_et_al_fundamental_ag}.
    We claim that
    by acting on the above extension with $(\varphi_K,\varphi_J)$ we obtain an
    extension that satisfies $\bar{f}=0$.
    To prove this, we need to unpack the abstract statements in the previous
    paragraph. For $s\in K$ let $s_A\in \IIbar_K\subseteq \IIbar_J$ be its lift and $s_K\in
    \II_K$ and $s_J\in \II_J$ be lifts of $s_A$. Let $\delta_s\in \Sk$ be such
    that $s_K - s_J = \delta_s \varepsilon$.
    Then the map $\bar{f}$ lifts an element $s$ to $s_K$, maps it to
    $S_B/\II_J$ and then lifts the resulting class to $\Sk/J$. One such lift is the
    class of $\delta_s$ in $\Sk/J$. So $\bar{f}(s) = \delta_s\mod J$ and we
    obtain that $\delta_s \mod J = -\varphi_K(s) + \varphi_J(s)$.

    Let $\varphi_K'$ be the composition
    \[
        \II_K\onto \II_K\tensor_B \kappa \simeq K\to \Sk/K  \simeq
        \varepsilon\cdot \Sk/K
        \into S_B/\varepsilon K,
    \]
    where the middle map is $\varphi_K$. Let $\II_K'$ be the preimage in $S_B$
    of
    \[
        \im(\id_{\II_K} + \varphi_K') = (i +
        \varphi_K'(i)\ |\ i\in \II_K) \subseteq S_B/\varepsilon K.
    \]
    Unravelling the proof
    of~\cite[Theorem~6.4.5(b)]{fantechi_et_al_fundamental_ag} or by a direct
    check with~\cite[Lemma~6.4.3]{fantechi_et_al_fundamental_ag} we see that
    $S_B/\II_K'$ is flat over $B$.
    Similarly, from $\varphi_J$ we obtain $\varphi_J'$, $\II_J'$ and $\QQ_J' := S_B/\II_J'$.
    From $(\II_K', \II_J')$ we obtain a map $\bar{f}'\colon K\to \Sk/J$ by the
    same procedure as for $\bar{f}$ above. Let us analyse it.
    Let us keep the notation $s$, $s_A$, $s_K$, $s_J$ from above. For an
    element of $\Sk/K$ by
    $\widehat{\cdot}$ we denote any its lift to $\Sk$.
    The elements $s_K' = s_K +
    \varepsilon \widehat{\varphi_K(s)}\in \II_K'$ and $s_J' = s_J + \varepsilon
    \widehat{\varphi_J(s)}\in \II_J'$ are lifts of $s_A$.
    We have
    \[
        s_K' - s_J' = s_K + \varepsilon\widehat{\varphi_K(s)} - s_J -
        \varepsilon\widehat{\varphi_J(s)} =
        \varepsilon(\delta_s + \widehat{\varphi_K(s)} -
        \widehat{\varphi_J(s)}),
    \]
    so we can take $\delta_s' = \delta_s + \widehat{\varphi_K(s)} -
        \widehat{\varphi_J(s)}$. But then $\delta_s' \equiv 0 \mod J$,
    which proves that $\bar{f}' = 0$. This implies that $\II_K'\subseteq
    \II_J'$ as claimed. The map $\Spec(B)\to \Hilb_{K\subseteq J}$
    corresponding to $(\II_K', \II_J')$ lifts the given map $\Spec(A)$. This concludes the
    proof.
\end{proof}


\begin{lemma}\label{ref:abstractThy:lem}
    Let $f\colon R_1\onto R_2$ be a surjective homomorphism of local Noetherian
    $\kk$-algebras with residue fields $\kappa$. Suppose that for every small
    extension $B\onto A$ of Artinian local $\kk$-algebras with residue field $\kappa$ and every commutative diagram of
    affine $\kk$-schemes
    \[
        \begin{tikzcd}
            &\Spec(R_2)\ar[from=d, "p"']\ar[r, "f^{\#}", hook, "cl"'] &
            \Spec(R_1)\ar[from=d]\\
            \Spec(\kappa)\ar[ru, hook]\ar[r, hook] & \Spec(A) \ar[r, hook] & \Spec(B)
        \end{tikzcd}
    \]
    there exists $p'\colon \Spec(B) \to \Spec(R_2)$ such that $p$ is the
    restriction of $p'$ (but we do not assume any compatibility with $f^{\#}$!).
    Then $\ker(f)$ is generated by
    $d$ elements whose classes are independent in $\mm_{R_1}/\mm_{R_1}^2$,
    where $d = \dim_{\kappa} \mm_{R_1}/\mm_{R_1}^2 - \dim_{\kappa}
    \mm_{R_2}/\mm_{R_2}^2$.
\end{lemma}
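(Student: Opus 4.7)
The plan is to reduce the claim to the single inclusion $K \cap \mm_{R_1}^2 \subseteq \mm_{R_1}K$, where $K:=\ker f$. From the exact sequence $0 \to K \to R_1 \to R_2 \to 0$ the kernel of the cotangent surjection $\mm_{R_1}/\mm_{R_1}^2 \onto \mm_{R_2}/\mm_{R_2}^2$ equals $(K+\mm_{R_1}^2)/\mm_{R_1}^2$, which has dimension exactly $d$. The natural surjection $K/\mm_{R_1}K \onto (K+\mm_{R_1}^2)/\mm_{R_1}^2$ is an isomorphism precisely when the above inclusion holds, and then $K$ is minimally generated by $d$ elements which, by lifting a basis of the kernel, can be chosen with linearly independent images in $\mm_{R_1}/\mm_{R_1}^2$. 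So the whole lemma reduces to proving this single inclusion.

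To prove it, I argue by contradiction. Suppose $y \in (K\cap \mm_{R_1}^2) \setminus \mm_{R_1}K$ and extend $y$ to a minimal generating set $y_1, \ldots, y_{c-1}, y_c := y$ of $K$. Set $K' := (y_1, \ldots, y_{c-1}) + \mm_{R_1}K$, so that $K/K' \cong \kk\cdot\bar y$ by Nakayama. The core construction is a small extension that witnesses this extra generator: take $B := R_1/(K' + \mm_{R_1}^N)$ and $A := R_1/(K + \mm_{R_1}^N)$ for $N$ large. Krull's intersection theorem applied to the Noetherian local ring $R_1/K'$ gives $y \notin K' + \mm_{R_1}^N$, and Artin--Rees gives $K \cap \mm_{R_1}^N \subseteq \mm_{R_1}K \subseteq K'$; together these yield that $\ker(B \to A)$ is the one-dimensional space $\kk\bar y$, and it is annihilated by $\mm_B$ because $\mm_{R_1}y \subseteq \mm_{R_1}K \subseteq K'$. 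The natural quotient maps $g\colon R_1 \to B$ and $R_2 \to A$ fit into the commutative diagram required by the hypothesis of the lemma.

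Now any lift $p'$ supplied by the hypothesis corresponds to a ring homomorphism $h\colon R_2 \to B$; composing with $R_1 \onto R_2$ yields $\tilde h\colon R_1 \to B$ that kills $K$ and agrees with $g$ modulo $\kk\bar y$. Writing $\tilde h - g = D\cdot\bar y$, a standard check identifies $D$ with a $\kk$-linear derivation $R_1 \to \kk$ (with $\kk$ the residue field module), hence with an element of $\Hom_\kk(\mm_{R_1}/\mm_{R_1}^2,\kk)$. The condition $\tilde h(y_c) = 0$ becomes $D(y) = -1$, while $y \in \mm_{R_1}^2$ forces $D(y) = 0$, a contradiction. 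The main obstacle is engineering the extension $B \to A$ correctly: the simultaneous need to cut the kernel down to exactly $\kk\bar y$, to have $\mm_B$ annihilate it, and to keep $\bar y$ nonzero in $B$ is exactly what Krull and Artin--Rees together provide. Once this is arranged the derivation computation is immediate.
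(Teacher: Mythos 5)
Your proof is correct, and it takes a genuinely different route from the paper's. Both arguments build a small extension and derive a contradiction from the assumed lift, but the specific reductions and contradictions differ. You reduce the lemma to the single commutative-algebra inclusion $K \cap \mm_{R_1}^2 \subseteq \mm_{R_1}K$ (equivalently, $K/\mm_{R_1}K \to (K+\mm_{R_1}^2)/\mm_{R_1}^2$ is an isomorphism), then construct a small extension $R_1/(K'+\mm_{R_1}^N) \onto R_1/(K+\mm_{R_1}^N)$ that drops a single minimal generator $y \in K\cap\mm_{R_1}^2$, using Artin--Rees and Krull to control the kernel; the contradiction is the classical tangent-space observation that the difference of two lifts is a derivation into the residue field, which must vanish on $\mm_{R_1}^2$ yet is forced to take the value $-1$ on $y$. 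The paper instead quotients $R_1$ by the ideal $K'$ generated by the $d$ ``visible'' elements of $K$, reducing to showing the induced surjection $\pi\colon \overline{R}_1 \to R_2$ (which is an isomorphism on cotangent spaces) has trivial kernel; its small extension is $\overline{R}_1/\nn^s \to \overline{R}_1/\nn^{s-1}$ for the minimal $s\geq 3$ with $\ker\pi\not\subseteq\nn^s$, and the contradiction is that the lift $g\colon R_2\to \overline{R}_1/\nn^s$ must be surjective (by Nakayama, since it induces a cotangent isomorphism) yet factors through the proper quotient $\overline{R}_1/(\ker\pi+\nn^s)$. Your version is the more ``textbook'' obstruction-theory argument via derivations and uses Artin--Rees explicitly; the paper's avoids derivations in favor of a surjectivity-versus-factoring count, at the cost of a slightly delicate diagram chase to see that $g$ kills $\nn^s$. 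Both are valid and of comparable length.
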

\begin{proof}
    \def\nn{\mathfrak{n}}%
    Let $d = \dim_{\kappa} (\mm_{R_1}^2 + \ker f)/\mm_{R_1}^2$ and let $K$ be
    generated by any $d$ elements of $\ker f$ whose classes span this space.
    Let $\overline{R}_1 := R_1/K$. It follows that the homomorphism
    $\pi\colon \overline{R}_1\to R_2$ also satisfies the assumptions of the statement
    and moreover induces an isomorphism on cotangent spaces.
    Let $\nn = \mm_{\overline{R}_1}$.
    Suppose that $\pi$ is not an isomorphism and
    let $s\geq 3$ be the minimal element such that $\ker \pi \not \subseteq
    \nn^s$.
    Let $A = \overline{R}_1/\nn^{s-1}$ and $B = \overline{R}_1/\nn^{s}$. By
    assumption, the canonical surjective homomorphism $R_2\to A$ lifts to a homomorphism $g\colon R_2\to B$. We obtain a
    diagram of homomorphisms of $\kk$-algebras
    \[
        \begin{tikzcd}
            R_2 = \overline{R}_1/ \ker \pi\ar[d, two heads, "can"]\ar[rd, "g"]\\
            \overline{R}_1 / \nn^{s-1} & \ar[l, two heads, "can"']
            \overline{R}_1 / \nn^{s}
        \end{tikzcd}
    \]
    The canonical maps induce isomorphisms of cotangent spaces, hence so does
    $g$. But then $g$ is surjective. Yet the map $g$ factors through $\overline{R}_1/(\ker \pi + \nn^s)\to
    \overline{R}_1/\nn^s$ which cannot be surjective, as
    $\dim_{\kappa}\overline{R}_1/(\ker \pi + \nn^s) < \dim_{\kappa}
    \overline{R}_1/\nn^s$. The contradiction shows that $\ker \pi = 0$ and
    the proof is completed.
\end{proof}

To apply Lemma~\ref{ref:abstractThy:lem} to the Hilbert schemes, we need to
understand the full tangent spaces, not only the tangent spaces to the fiber.
\begin{lemma}[image in the tangent to the base]\label{ref:sernesi:lem}
    Let $H\to \Spec(\kk)$ be one of the Hilbert schemes considered above, let
    $\ObSymbol$ be its obstruction group and
    let $x\colon \Spec(\kappa)\to H$ be a point. Then the obstruction map yields a
    $\kappa$-linear map $T_{\kappa} \Spec(\kk) \to \ObSymbol$ and the kernel of this
    map is the image of $T_{x}H$ in $T_{\kappa}\Spec(\kk)$. If $f\colon H\to H'$ is a
    morphims of such Hilbert schemes which induces an injection on obstruction
    groups, then the images of $T_{x} H$ and $T_{f(x)} H'$ in $T_{\kappa}
    \Spec(\kk)$ coincide.
\end{lemma}
\begin{proof}
    This follows as in~\cite[Proposition~4.4.4]{Sernesi__Deformations}. The
    key point is that a tangent vector in $T_{\kappa} \Spec(\kk)$ yields a map
    $\Spec(\kappa[\varepsilon]/\varepsilon^2)\to \Spec(\kk)$ and this map
    lifts to a map to $H$ if and only if the associated obstruction in
    $\ObSymbol$ vanishes.
\end{proof}

\begin{corollary}
    Under the assumptions of Theorem~\ref{ref:obstructionAtFlag:thm} the
    subscheme $\Hilb_{K\subseteq J} \subseteq \Hilb_K \times \Hilb_J$ is cut
    out near $[K\subseteq J]$ by $\dim_{\kappa} \Hom(K,
    \Sk/J)_{\zerodeg}$
    equations which yield linearly independent classes in the cotangent space
    at $[K\subseteq J]$.
\end{corollary}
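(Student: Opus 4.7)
The plan is to apply Lemma~\ref{ref:abstractThy:lem} to the surjection of local rings $R_1 \onto R_2$ induced by the closed immersion $\Hilb_{K\subseteq J} \into \Hilb_K\times \Hilb_J$, where $R_i$ denotes the local ring at $[K\subseteq J]$.

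First, I would verify the lifting hypothesis of the lemma. Given a small extension $B\onto A$, the data of a commutative diagram as in Lemma~\ref{ref:abstractThy:lem} amounts to an $A$-point $p$ of $\Hilb_{K\subseteq J}$ based at $[K\subseteq J]$ together with an extension of its composition with $\Hilb_{K\subseteq J}\into \Hilb_K\times \Hilb_J$ to a $B$-point of $\Hilb_K\times \Hilb_J$. This is exactly the input treated in the proof of Theorem~\ref{ref:obstructionAtFlag:thm}: using surjectivity of $\psi_{\zerodeg}$ one modifies the $B$-point of the product so that it factors through $\Hilb_{K\subseteq J}$, producing the required morphism $p'\colon \Spec(B)\to \Spec(R_2)$ extending $p$. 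Note that the lemma explicitly does not require $p'$ to be compatible with the original $\Spec(B)\to \Spec(R_1)$, matching the fact that the proof of the theorem alters the latter.

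Second, I would compute the codimension $d$ from the lemma. From the main diagram in~\S\ref{ssec:maindiagram}, the space $\TgFlag$ is the kernel of $\psi_{\zerodeg}$ (up to sign), and surjectivity of $\psi_{\zerodeg}$ yields the short exact sequence
\[
0 \to \TgFlag \to \Hom_S(K,S/K)_{\zerodeg}\oplus \Hom_S(J,S/J)_{\zerodeg} \to \Hom_S(K,S/J)_{\zerodeg} \to 0.
\]
Since $\TgFlag$ is the tangent space of $\Hilb_{K\subseteq J}$ at $[K\subseteq J]$ and the middle term is the tangent space of $\Hilb_K\times \Hilb_J$ at the same point, we get
\[
d = \dim_{\kk}\mm_{R_1}/\mm_{R_1}^2 - \dim_{\kk}\mm_{R_2}/\mm_{R_2}^2 = \dim_{\kk}\Hom_S(K,S/J)_{\zerodeg}.
\]

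Finally, Lemma~\ref{ref:abstractThy:lem} produces $d$ generators of $\ker(R_1\onto R_2)$ whose classes are linearly independent in $\mm_{R_1}/\mm_{R_1}^2$. Spreading these to a Zariski neighborhood of $[K\subseteq J]$ in $\Hilb_K\times \Hilb_J$ gives the equations of the statement. The essential work has already been done in Theorem~\ref{ref:obstructionAtFlag:thm} and Lemma~\ref{ref:abstractThy:lem}, so no serious obstacle remains; the main thing to be careful about is the precise identification of the hypothesis of the abstract lemma with what the theorem's proof produces, in particular the absence of compatibility with the auxiliary map to $\Spec(R_1)$.
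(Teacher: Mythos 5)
Your proposal is correct and follows essentially the same route as the paper: apply Lemma~\ref{ref:abstractThy:lem} to $\OO_{\Hilb_K\times\Hilb_J,([K],[J])}\onto \OO_{\Hilb_{K\subseteq J},[K\subseteq J]}$, with the codimension $d$ computed from the main diagram as $\dim_{\kk}\Hom_S(K,S/J)_{\zerodeg}$ using surjectivity of $\psi_{\zerodeg}$. The only cosmetic difference is that you verify the lifting hypothesis by rerunning the modification step inside the proof of Theorem~\ref{ref:obstructionAtFlag:thm}, whereas the paper deduces it formally from the theorem's statement that the closed immersion induces an injective map on obstruction spaces; these are equivalent.
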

We stress that without additional assumptions on $\Hilb_K\times \Hilb_J$ this
does not imply that $\Hilb_{K\subseteq J}$ has codimension $\dim_{\kappa} \Hom(K,
\Sk/J)_{\zerodeg}$.
\begin{proof}
    Theorem~\ref{ref:obstructionAtFlag:thm} shows that closed immersion
    induces an injective map on obstruction spaces. This by definition of
    obstruction implies that the assumptions of
    Lemma~\ref{ref:abstractThy:lem} are satisfied for
    \[
        R_1 = \OO_{\Hilb_K\times \Hilb_J, ([K], [J])} \onto R_2 =
        \OO_{\Hilb_{K\subseteq J}, [K\subseteq J]}.
    \]
    It remains to understand the difference of dimensions of tangent maps. The images of both tangent spaces in
    $T_{\kappa}\Spec(\kk)$ coincide by Lemma~\ref{ref:sernesi:lem}. So we can restrict to
    tangent spaces to the fibers.
    Finally, since $\psi_{\zerodeg}$ is surjective, we see from the main
    diagram~\S\ref{ssec:maindiagram} that
    \[
        \dim_{\kappa} T_{([K], [J])} \Hilb_K \times \Hilb_J - \dim_{\kappa}
        T_{[K\subseteq J]} \Hilb_{K\subseteq J} = \dim_{\kappa} \Hom_{\Sk}(K,
        \Sk/J)_{\zerodeg}.\qedhere
    \]
\end{proof}

\newcommand{\etalchar}[1]{$^{#1}$}

\end{document}